\newtheorem{thm}{Theorem}[section]
\newtheorem{lem}[thm]{Lemma}
\newtheorem{prop}[thm]{Proposition}
\newtheorem{cor}[thm]{Corollary}
\newtheorem{conj}[thm]{Conjecture}
\theoremstyle{definition}
\newtheorem{defn}[thm]{Definition}
\theoremstyle{remark}
\newtheorem{rmk}[thm]{Remark}
\newtheorem{exam}[thm]{Example}
\numberwithin{equation}{section}
\newcommand{\IC}{\mathrm{IC}}
\newcommand{\C}{\mathbb{C}}
\newcommand{\F}{\mathbb{F}}
\newcommand{\N}{\mathbb{N}}
\newcommand{\Oo}{\mathbb{O}}
\newcommand{\Tt}{\mathbb{T}}
\newcommand{\Ss}{\mathbb{S}}
\newcommand{\Ee}{\mathbb{E}}
\newcommand{\Z}{\mathbb{Z}}
\newcommand{\cE}{\mathcal{E}}
\newcommand{\cN}{\mathcal{N}}
\newcommand{\cO}{\mathcal{O}}
\newcommand{\cP}{\mathcal{P}}
\newcommand{\cQ}{\mathcal{Q}}
\newcommand{\cS}{\mathcal{S}}
\newcommand{\cT}{\mathcal{T}}
\newcommand{\fN}{\mathfrak{N}}
\newcommand{\fo}{\mathfrak{o}}
\newcommand{\fsp}{\mathfrak{sp}}
\newcommand{\fgl}{\mathfrak{gl}}
\newcommand{\tPsi}{\widetilde{\Psi}}
\newcommand{\hPhi}{\widehat{\Phi}}
\newcommand{\tchi}{\widetilde{\chi}}
\newcommand{\tV}{\widetilde{V}}
\newcommand{\tQ}{\widetilde{Q}}
\newcommand{\dup}{{\mathrm{dup}}}
\newcommand{\bt}{{\mathbf{t}}}
\newcommand{\isomto}{\overset{\sim}{\rightarrow}}
\newcommand{\disjunion}{\cup}
\newcommand{\bigdisjunion}{\bigcup}
\DeclareMathOperator{\End}{End}
\DeclareMathOperator{\im}{im}
\title[Pieces of nilpotent cones for classical groups]
{Pieces of nilpotent cones for classical groups}
\author{Pramod N. Achar}
\address{Department of Mathematics\\
Louisiana State University\\
Baton Rouge, LA 70803-4918\\
USA}
\email{pramod@math.lsu.edu}
\author{Anthony Henderson}
\address{School of Mathematics and Statistics\\
University of Sydney, NSW 2006\\
Australia}
\email{anthonyh@maths.usyd.edu.au}
\author{Eric Sommers}
\address{Department of Mathematics and Statistics\\
University of Massachusetts\\
Amherst, MA 01003-4515\\
USA}
\email{esommers@math.umass.edu}
\thanks{The first author's research was supported by Louisiana
Board of Regents grant NSF(2008)-LINK-35 and by National Security
Agency grant H98230-09-1-0024. The second
author's research was supported by
Australian Research Council grant DP0985184.}
\subjclass[2010]{Primary 17B08, 20G15; Secondary 14L30}
\begin{document}

\begin{abstract}
We compare orbits in the nilpotent cone of type $B_n$, that of type $C_n$, and Kato's
exotic nilpotent cone. We prove that the number of $\F_q$-points
in each nilpotent orbit of type $B_n$ or $C_n$ equals that in a corresponding union of
orbits, called a type-$B$ or type-$C$ piece, in the exotic nilpotent cone. 
This is a finer version of Lusztig's result that
corresponding special pieces in types $B_n$ and $C_n$ have the same number of 
$\F_q$-points. The proof requires studying the case of characteristic $2$, where more
direct connections between the three nilpotent cones can be established.
We also prove that the type-$B$ and type-$C$ pieces of the exotic nilpotent cone are smooth in any characteristic.
\end{abstract}

\maketitle

\section{Introduction}
Let $\F$ be an algebraically closed field; for the moment, assume that the 
characteristic of $\F$ is not $2$.
The algebraic groups $SO_{2n+1}(\F)$ (of type $B_n$) and $Sp_{2n}(\F)$ (of type $C_n$)
share many features by virtue of having dual root data and hence isomorphic Weyl groups:
$W(B_n) \cong W(C_n) \cong \{\pm 1\}\wr S_n$.


The connection between the orbits in their respective nilpotent cones 
$\cN(\fo_{2n+1})$ and $\cN(\fsp_{2n})$
is 
subtle. (Recall that in characteristic $\neq 2$, these nilpotent cones are isomorphic
to the unipotent varieties of the corresponding groups, so everything we say about nilpotent orbits implies an analogous statement for unipotent classes.)
The nilpotent orbits are most commonly parametrized by their Jordan types:
in the case of $\cN(\fo_{2n+1})$, these are the
partitions of $2n+1$ in which every even part has even multiplicity; in the case
of $\cN(\fsp_{2n})$, the partitions of $2n$ in which every odd part has
even multiplicity. But to make a connection between the two nilpotent cones, it
is more natural to use the parametrization derived from the Springer correspondence.

In the Springer correspondence, nilpotent orbits (or, strictly speaking,
the trivial local systems on such orbits)
correspond to a subset of the irreducible representations of the Weyl group.
The irreducible representations of $\{\pm 1\}\wr S_n$ are parametrized
by the set $\cQ_n$ of bipartitions of $n$, on which we put the partial order
of `interleaved dominance', as in \cite{spaltenstein} and \cite{ah}: 
$(\rho;\sigma)\leq(\mu;\nu)$ means that the composition $(\rho_1,\sigma_1,
\rho_2,\sigma_2,\cdots)$ is dominated by the composition $(\mu_1,\nu_1,
\mu_2,\nu_2,\cdots)$. 
In the conventions used in this paper (for which see Section 2),
the nilpotent orbits in $\cN(\fo_{2n+1})$
and $\cN(\fsp_{2n})$, each with the
partial order given by closure inclusion, correspond respectively
to the sub-posets of $\cQ_n$ consisting of
\emph{$B$-distinguished} and \emph{$C$-distinguished} bipartitions:
\[
\begin{split}
\cQ_n^B&=\{(\mu;\nu)\in\cQ_n\,|\,\mu_i\geq\nu_i-2,\,\nu_i\geq\mu_{i+1},
\text{ for all }i\},\text{ and}\\
\cQ_n^C&=\{(\mu;\nu)\in\cQ_n\,|\,\mu_i\geq\nu_i-1,\,\nu_i\geq\mu_{i+1}-1,
\text{ for all }i\}.
\end{split}
\] 

Therefore, when comparing the two nilpotent cones, one is led
to the sub-poset of \emph{special} bipartitions (which label Lusztig's special
representations of $\{\pm 1\}\wr S_n$):
\[
\cQ_n^\circ=\cQ_n^B\cap\cQ_n^C=\{(\mu;\nu)\in\cQ_n\,|\,\mu_i\geq\nu_i-1,
\,\nu_i\geq\mu_{i+1},\text{ for all }i\}.
\]
Spaltenstein \cite{spaltenstein1}
observed that for every $(\rho;\sigma)\in\cQ_n^B$, there is a unique
$(\mu;\nu)\in\cQ_n^\circ$ which satisfies $(\rho;\sigma)\leq(\mu;\nu)$
and is minimal among all special bipartitions with this property; hence one
can group the orbits in $\cN(\fo_{2n+1})$ together into locally closed
subvarieties called \emph{special pieces}, one for each $(\mu;\nu)\in\cQ_n^\circ$.
The same is true for $\cQ_n^C$, so one can also define special pieces in
$\cN(\fsp_{2n})$ with the same parameter set.
In fact, special pieces can be defined in the nilpotent 
cones of all types -- see Spaltenstein \cite{spaltenstein1} and Lusztig \cite{lusztig:notes}.

The special pieces are a natural
basis of comparison between the
nilpotent cones 
in types $B_n$ and $C_n$. 
In \cite[Theorem 0.9, 6.9]{lusztig:notes}, Lusztig proved that
for any $(\mu;\nu)\in\cQ_n^\circ$, the corresponding special pieces of 
$\cN(\fo_{2n+1})$ and $\cN(\fsp_{2n})$ have the same number of
$\F_q$-points for every finite subfield $\F_q$; if $\F=\C$, the statement
becomes that the corresponding special pieces have the same equivariant Betti numbers. Lusztig's proof 
consists of computing each side of the equality independently, and showing
that the answers are the same; it does not give any direct connection between
the special pieces.

More recently, the work of Syu Kato \cite{kato:exotic,kato:deformations} has 
revealed that
there is a third nilpotent cone which in some respects is intermediate between
type $B$ and type $C$. This is the \emph{exotic nilpotent cone}
\[
\fN=\F^{2n}\times\{x\in\fgl_{2n}\,|\,x\text{ nilpotent},
\langle xv,v\rangle=0\text{ for all }v\in\F^{2n}\},
\]
where $\langle\cdot,\cdot\rangle$ denotes a nondegenerate alternating form. 
This exotic nilpotent
cone may seem more closely affiliated with type $C_n$ in that it is the 
symplectic group
$Sp_{2n}$ which acts on it, but note that it is the Hilbert null-cone in the
reducible representation $\F^{2n}\oplus\Lambda^2(\F^{2n})$, whose weights
are the same as those of $\fo_{2n+1}$ (under the usual identification of
the weight lattice of $Sp_{2n}$ with the root lattice of $SO_{2n+1}$).

The orbits in $\fN$, with the partial order given by
closure inclusion, correspond to the entire poset $\cQ_n$, as proved in 
\cite{kato:exotic} and \cite[Section 6]{ah}. So in $\fN$ one can not only
define special pieces in the same way as for $\cN(\fo_{2n+1})$ and 
$\cN(\fsp_{2n})$, one can define \emph{type-$B$ pieces} parametrized by
$\cQ_n^B$ and \emph{type-$C$ pieces} parametrized by $\cQ_n^C$. 

In the main result of this paper, Theorem \ref{thm:main}, we
prove equalities of numbers of $\F_q$-points between
the type-$B$ pieces of $\fN$ and the orbits of $\cN(\fo_{2n+1})$, and between
the type-$C$ pieces of $\fN$ and the orbits of $\cN(\fsp_{2n})$.
These finer equalities automatically imply that the number of $\F_q$-points
in a special piece of $\fN$ equals that in the corresponding special pieces
of $\cN(\fo_{2n+1})$ and $\cN(\fsp_{2n})$; thus we have a new proof of
Lusztig's result, via the exotic nilpotent cone (see Section 2 for the details).

The key to proving Theorem \ref{thm:main} is to remove the restriction on the
characteristic of $\F$, considering also the case of the
`bad' characteristic $2$. The classification of nilpotent orbits in characteristic
$2$ is slightly more complicated, but Lusztig and Xue have shown in 
\cite[Appendix]{lusztig:unip}
that there is a uniform way to define 
certain
pieces of $\cN(\fo_{2n+1})$ and $\cN(\fsp_{2n})$,
which in characteristic $\neq 2$ are the 
nilpotent orbits, and which in characteristic $2$ are unions
of nilpotent orbits. The number of $\F_q$-points in 
one of these {\it nilpotent pieces} is given by a polynomial in $q$ which
is independent of the characteristic. For the exotic nilpotent cone, 
the classification of orbits and the polynomials giving the number of
$\F_q$-points are the same in all characteristics. Since it suffices to prove an equality
of polynomials in $q$ in the case that $q$ is a power of $2$, 
we can restrict our attention purely to the case of characteristic $2$, where it is
possible to make direct connections between the three nilpotent cones.

It is perhaps not surprising that it should be easier to relate types $B_n$ and $C_n$ in 
characteristic $2$, since that is the characteristic in which there is an 
isogeny $SO_{2n+1}\to Sp_{2n}$. But this isogeny
does not induce a Lie algebra isomorphism, and the connection
between the orbits in $\cN(\fo_{2n+1})$ and $\cN(\fsp_{2n})$ only becomes apparent when one
uses the exotic nilpotent cone $\fN$ as a bridge between them.

One half of the bridge was constructed in
\cite{kato:deformations}, where Kato observed that there is an $Sp_{2n}$-equivariant
bijective morphism (not an isomorphism) from $\fN$ to $\cN(\fsp_{2n})$. 
In Section 3, we deduce from this the corresponding half of Theorem \ref{thm:main}.

In Section 4, we supply
the second half of the bridge, observing that in characteristic $2$
there is an isomorphism from $\cN(\fo_{2n+1})$ to $\fN$. This isomorphism is not
$SO_{2n+1}$-equivariant, but we are still able to use it to show
that the number of $\F_q$-points in an orbit in $\cN(\fo_{2n+1})$ equals
that in a corresponding union of orbits in $\fN$ (see Theorem \ref{thm:poly-B2}).
This implies the other half of Theorem \ref{thm:main}.

In Section 5, we prove that the type-$B$ and type-$C$ pieces of $\fN$ (in any characteristic) are smooth, like the nilpotent pieces of
\cite[Appendix]{lusztig:unip}. We conjecture that the special pieces of $\fN$ are
rationally smooth, like those of $\cN(\fo_{2n+1})$ and $\cN(\fsp_{2n})$ in good
characteristic. We also make
some remarks about the problem of computing intersection cohomology of nilpotent orbit
closures in bad characteristic.

For the convenience of the reader, we list in Table~\ref{tab:intropieces} our notations for the various kinds of pieces considered in this paper, with the reference number where 
they are first defined. The entries of the first column indicate which nilpotent cone in
which characteristic contains the pieces listed in that row; the heading of each other
column indicates the subset of $\cQ_n$ to which the labels $(\mu;\nu)$ in that column belong (the subset
$\cQ_n^{B,2}$, not previously mentioned, will be defined before Theorem \ref{thm:spalt-B}).
Notice that pieces of the exotic nilpotent cone are designated
by blackboard-bold letters $\Oo$ (for orbits), $\Tt$ (for type-$B$ and type-$C$ pieces),
and $\Ss$ (for special pieces), while pieces of $\cN(\fo_{2n+1})$ and $\cN(\fsp_{2n})$
are designated by calligraphic letters $\cO$ (for orbits), $\cT$ (for nilpotent pieces),
and $\cS$ (for special pieces). The guiding principle of this paper is that for each column
of Table~\ref{tab:intropieces}, the pieces listed in that column should be closely
related (though not necessarily isomorphic): in particular, we show that there is a 
uniform polynomial which counts the $\F_q$-points of each of them.

\begin{table}
\[
\begin{array}{l|l|l|l|l|l|}
\text{cone; \ char }\F & \cQ_n & \cQ_n^{B,2} & \cQ_n^B\qquad\strut & \cQ_n^C\qquad\strut & \cQ_n^\circ\qquad\strut \\
\hline
\cN(\fo_{2n+1}); \neq 2^{\strut} & & & \cO^B_{\mu;\nu}\;[\ref{thm:gerst-B}] & & 
\cS^B_{\mu;\nu}\;[\ref{defn:special}] \\
\cN(\fo_{2n+1}); 2 & & \cO^{B,2}_{\mu;\nu}\;[\ref{thm:spalt-B}] 
& \cT^B_{\mu;\nu}\;[\ref{defn:nilppiece-B}] & & 
\cS^B_{\mu;\nu}\;[\ref{defn:nilppiece-B}] \\
\cN(\fsp_{2n}); \neq 2 & & & & \cO^C_{\mu;\nu}\;[\ref{thm:gerst-C}] & 
\cS^C_{\mu;\nu}\;[\ref{defn:special}] \\
\cN(\fsp_{2n}); 2 & \cO^{C,2}_{\mu;\nu}\;[\ref{thm:spalt-C}] & 
\cT^{C;B,2}_{\mu;\nu}\;[\ref{rmk:relationship}] 
& 
& \cT^C_{\mu;\nu}\;[\ref{defn:nilppiece-C}] & \cS^C_{\mu;\nu}\;[\ref{defn:nilppiece-C}] \\
\fN; \text{any} & \Oo_{\mu;\nu}\;[\ref{thm:exotic}] & 
\Tt^{B,2}_{\mu;\nu}\;[\ref{defn:pieces-B2}] & \Tt^B_{\mu;\nu}\;[\ref{defn:pieces}] 
& \Tt^C_{\mu;\nu}\;[\ref{defn:pieces}] & \Ss_{\mu;\nu}\;[\ref{defn:special}]
\end{array}
\]
\caption{Pieces of nilpotent cones for classical groups.}\label{tab:intropieces}
\end{table}

\textbf{Acknowledgements.} The authors are indebted to the Isaac Newton Institute in
Cambridge, and to the organizers of the Algebraic Lie Theory program
there in 2009, during which this paper was conceived. We are also grateful to Ting Xue for helpful conversations, and for 
keeping us informed of her work \cite{xue:note} as it progressed.
\section{Combinatorics of bipartitions}
Throughout this section, $\F$ denotes an algebraically closed field of characteristic 
not equal to $2$. We will recall the parametrization of orbits
in $\cN(\fo_{2n+1})$, $\cN(\fsp_{2n})$, and the exotic nilpotent cone $\fN$, 
over $\F$.   We will then
state our main result, Theorem \ref{thm:main}, which relates 
various
pieces of the three cones,
and show how it implies Lusztig's result on special pieces.

The orbits in $\cN(\fo_{2n+1})$ and $\cN(\fsp_{2n})$ are usually parametrized by
certain partitions of $2n+1$ and $2n$ respectively. To make comparisons between
these two cones and the exotic nilpotent cone, we need to change to the
parameters given by the Springer correspondence, which for types $B$ and $C$ was
determined by Shoji in \cite{shoji:springer}. It is more convenient for us to use the
combinatorics of bipartitions,
rather than Lusztig's symbols as in \cite[13.3]{carter} and \cite[10.1]{cm}.
Much of the combinatorics we will describe is also
to be found in \cite{xue:note}.

Throughout the paper, $n$ denotes a nonnegative integer.
A \emph{composition} of $n$ is a sequence $\pi=(\pi_1,\pi_2,\pi_3,\cdots)$ of nonnegative
integers, almost all zero, such that $|\pi|=n$, where $|\pi|$ denotes
$\pi_1+\pi_2+\pi_3+\cdots$. The set of compositions of $n$ becomes a poset under the
partial order of \emph{dominance}:
\[
\pi\leq\lambda\Longleftrightarrow \pi_1+\cdots+\pi_k\leq\lambda_1+\cdots+\lambda_k,
\text{ for all }k.
\]

A composition $\pi$ is a \emph{partition} if $\pi_i\geq\pi_{i+1}$ for all $i$.
The poset of partitions of $n$ is denoted $\cP_n$. We use the usual shorthand notation
for partitions: for example, $(21^2)$ means $(2,1,1,0,0,\cdots)$ and $\varnothing$ means
$(0,0,0,\cdots)$.

A composition $\pi$ is a \emph{quasi-partition} if $\pi_i\geq\pi_{i+2}$ for all $i$.
It is clear that the set of quasi-partitions of $n$ is in bijection with the set
$\cQ_n$ of \emph{bipartitions} of $n$, that is, ordered 
pairs of partitions $(\rho; \sigma)$ with $|\rho| + |\sigma| = n$. Namely, the bipartition
$(\rho;\sigma)$ corresponds to the quasi-partition $(\rho_1,\sigma_1,\rho_2,\sigma_2,\cdots)$ obtained by interleaving the parts. Following \cite{spaltenstein}, 
\cite{shoji:limit}, and \cite{ah}, we use the partial order on $\cQ_n$ corresponding
to the dominance order on quasi-partitions: that is, 
$(\rho;\sigma)\leq(\mu;\nu)$ if and only if
\[
\begin{split}
\rho_1+\sigma_1+\rho_2+\sigma_2+\cdots+\rho_k+\sigma_k
&\leq \mu_1+\nu_1+\mu_2+\nu_2+\cdots+\mu_k+\nu_k,\text{ and}\\
\rho_1+\sigma_1+\cdots+\rho_k+\sigma_k+\rho_{k+1}
&\leq \mu_1+\nu_1+\cdots+\mu_k+\nu_k+\mu_{k+1},
\end{split}
\]
for all $k$.

It is well known that $\cQ_n$ parametrizes the irreducible characters of
$\{\pm 1\}\wr S_n\cong W(B_n)\cong W(C_n)$. We use the conventions of \cite[5.5]{gpf},
in which $\chi^{(n);\varnothing}$ is the trivial character, $\chi^{\varnothing;(n)}$
is the character whose kernel is $W(D_n)$, and tensoring with the sign
representation interchanges $\chi^{\mu;\nu}$ and $\chi^{\nu^\bt;\mu^\bt}$.


We also define, for any $(\mu;\nu)\in\cQ_n$,
\[
b(\mu;\nu)=0\mu_1+1\nu_1+2\mu_2+3\nu_2+4\mu_3+5\nu_3+\cdots.
\]
The representation-theoretic meaning of this quantity is explained 
in \cite[5.5.3]{gpf}: it is the smallest degree of the
coinvariant algebra of $W(B_n)\cong W(C_n)$ in which the character $\chi^{\mu;\nu}$ occurs.


\subsection{The nilpotent cone of type $C_n$}

Let $\cP_{2n}^C$ denote the sub-poset of $\cP_{2n}$
consisting of partitions of $2n$ in which every odd part has
even multiplicity.
We define a map $\Phi^C:\cQ_n\to\cP_{2n}^C$
as follows. For any $(\rho;\sigma)\in\cQ_n$, $\Phi^C(\rho;\sigma)$ is defined to be
the composition of $2n$ obtained from the quasi-partition
\begin{equation*}
(2\rho_1,2\sigma_1,2\rho_2,2\sigma_2,2\rho_3,2\sigma_3,\cdots)
\end{equation*}
by replacing
successive terms $2s,2t$ with $s+t,s+t$ whenever $2s<2t$. The quasi-partition
property implies that we never have to make
overlapping replacements, so this makes sense.

\begin{prop} \label{prop:phiC} Let $(\rho;\sigma),(\mu;\nu)\in\cQ_n$.
\begin{enumerate}
\item
$\Phi^C(\rho;\sigma)\in\cP_{2n}^C$.
\item
If $\lambda\in\cP_{2n}$, then $\lambda\geq
(2\rho_1,2\sigma_1,2\rho_2,2\sigma_2,\cdots)$ if and only if
$\lambda\geq\Phi^C(\rho;\sigma)$.
\item
If $(\rho;\sigma)\leq(\mu;\nu)$, then $\Phi^C(\rho;\sigma)\leq\Phi^C(\mu;\nu)$. 
\end{enumerate}
\end{prop}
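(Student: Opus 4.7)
The plan is to handle parts (1) and (2) by directly analyzing the combinatorial replacement rule, after which part (3) will fall out as a formal consequence of (2).

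For part (1), I first note that the starting sequence $\pi = (2\rho_1, 2\sigma_1, 2\rho_2, 2\sigma_2, \ldots)$ is a quasi-partition because $\rho$ and $\sigma$ are partitions. This quasi-partition property forces any two ``ascents'' (positions $i$ with $\pi_i < \pi_{i+1}$) to be non-adjacent, because $\pi_i < \pi_{i+1} < \pi_{i+2}$ would contradict $\pi_i \geq \pi_{i+2}$; this justifies the remark in the definition that the replacements never need to overlap. At each ascent, with $\pi_i = 2s < 2t = \pi_{i+1}$, the quasi-partition property gives $\pi_{i-1} \geq \pi_{i+1} = 2t \geq s + t$ and $\pi_{i+2} \leq \pi_i = 2s \leq s + t$, so inserting the pair $s+t, s+t$ in place of $2s, 2t$ is locally compatible with the weakly-decreasing condition; performing all the (non-interacting) replacements produces a partition. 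Since each replacement preserves the sum $2s + 2t$, the result is a partition of $2n$. For the parity condition, the parts of $\Phi^C(\rho;\sigma)$ are either unchanged entries $2\rho_i$ or $2\sigma_j$ (even) or come from replacement pairs $s+t, s+t$; an odd value thus only appears in pairs, so the multiplicity of any odd value is a sum of $2$'s, hence even.

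For part (2), the key lemma is the following single-step statement: if $\lambda$ is a partition and $\tau$ is any composition with $\lambda \geq \tau$, and $\tau'$ is obtained from $\tau$ by replacing a consecutive pair $(\tau_i, \tau_{i+1}) = (a, b)$ with $a < b$ by $(\tfrac{a+b}{2}, \tfrac{a+b}{2})$, then $\lambda \geq \tau'$. Writing $S_k, T_k, T'_k$ for the $k$th partial sums of $\lambda, \tau, \tau'$, only position $i$ requires checking, and one needs $S_i \geq T_{i-1} + \tfrac{a+b}{2}$. The partition condition $\lambda_i \geq \lambda_{i+1}$ gives $2 S_i \geq S_{i-1} + S_{i+1}$; combining with $S_{i-1} \geq T_{i-1}$ and $S_{i+1} \geq T_{i+1} = T_{i-1} + a + b$ yields $2 S_i \geq 2 T_{i-1} + a + b$, as required. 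Iterating this lemma over the non-overlapping replacements identified in (1) gives the nontrivial implication of (2); the reverse implication is immediate because each replacement weakly raises each partial sum, so $\Phi^C(\rho;\sigma) \geq (2\rho_1, 2\sigma_1, 2\rho_2, 2\sigma_2, \ldots)$ in composition dominance, and dominance is transitive.

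Part (3) then drops out: if $(\rho;\sigma) \leq (\mu;\nu)$ in $\cQ_n$, then by definition $(2\rho_1, 2\sigma_1, 2\rho_2, \ldots) \leq (2\mu_1, 2\nu_1, 2\mu_2, \ldots) \leq \Phi^C(\mu;\nu)$ in composition dominance, and applying (2) with $\lambda = \Phi^C(\mu;\nu)$ (which is a partition by (1)) upgrades this to $\Phi^C(\mu;\nu) \geq \Phi^C(\rho;\sigma)$. I expect the main obstacle to be the careful verification in (1) that the simultaneous replacements still yield a partition; however, since the quasi-partition inequality $\pi_i \geq \pi_{i+2}$ precisely forbids adjacent ascents and sandwiches the new entries $s+t$ correctly between the untouched neighbors, this reduces to a finite and mechanical check. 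Everything else is formal.
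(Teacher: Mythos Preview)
Your proof is correct and takes essentially the same route as the paper: the averaging argument for (2) and the formal deduction of (3) from (2) are identical in substance. The only omission is in (1): you compare each replaced pair $s+t,s+t$ against the \emph{original} neighbours $\pi_{i-1},\pi_{i+2}$, but when ascents occur at positions $i$ and $i+2$ you must compare the two \emph{new} averages, which follows at once from $\pi_i\geq\pi_{i+2}$ and $\pi_{i+1}\geq\pi_{i+3}$ (the paper sidesteps this extra case via the closed formula $\xi_i=\min\{\pi_i,\tfrac{\pi_{i-1}+\pi_i}{2}\}$ when $\pi_i\geq\pi_{i+1}$).
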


\begin{proof}
Set $\pi=(2\rho_1,2\sigma_1,2\rho_2,2\sigma_2,\cdots)$, and for
brevity write $\xi$ for $\Phi^C(\rho;\sigma)$. It is clear from the
definition that any odd part of $\xi$ arises as a 
replacement of some pairs of successive parts of $\pi$, 
so its multiplicity must be even. To prove (1), we need only check that
$\xi_i\geq\xi_{i+1}$ for any $i$. If $\pi_i<\pi_{i+1}$, then $\xi_i=\xi_{i+1}$ 
by construction. On the other hand, if $\pi_i\geq\pi_{i+1}$, then 
\[
\xi_i=\min\left\{\pi_i,\frac{\pi_{i-1}+\pi_i}{2}\right\}
\geq\max\left\{\pi_{i+1},\frac{\pi_{i+1}+\pi_{i+2}}{2}\right\}
=\xi_{i+1},
\] 
since $\pi_{i-1}\geq\pi_{i+1}$ and $\pi_i\geq\pi_{i+2}$ (here we can interpret $\pi_0$ as 
$\infty$ to handle the $i=1$ case). So (1) is proved.

By definition we have 
\[
\xi_1+\xi_2+\cdots+\xi_k=\pi_1+\pi_2+\cdots+\pi_{k-1}+
\max\left\{\pi_k,\frac{\pi_k+\pi_{k+1}}{2}\right\},
\]
so $\xi\geq\pi$, from which the ``if'' direction of (2) follows. 
To prove the
converse, we assume that $\lambda\in\cP_{2n}$ satisfies $\lambda\geq\pi$. Then for any
positive integer $k$, we have
\[
\begin{split}
\lambda_1+\cdots+\lambda_{k-1}&\geq\pi_1+\cdots+\pi_{k-1}\text{ and}\\
\lambda_1+\cdots+\lambda_{k-1}+\lambda_k+\lambda_{k+1}
&\geq\pi_1+\cdots+\pi_{k-1}+\pi_k+\pi_{k+1}.
\end{split}
\]
Taking the average on both sides of these two inequalities and using the fact that
$\lambda_k\geq\lambda_{k+1}$, we deduce that
\[
\lambda_1+\cdots+\lambda_{k-1}+\lambda_k\geq\pi_1+\cdots+\pi_{k-1}
+\frac{\pi_k+\pi_{k+1}}{2}.
\]
Hence
\[
\lambda_1+\cdots+\lambda_k\geq\pi_1+\cdots+\pi_{k-1}+
\max\left\{\pi_k,\frac{\pi_k+\pi_{k+1}}{2}\right\}=\xi_1+\cdots+\xi_k,
\]
as required to prove (2).

The proof of (3) is now easy: if $(\rho;\sigma)\leq(\mu;\nu)$, then
\[ 
(2\rho_1,2\sigma_1,2\rho_2,2\sigma_2,\cdots)\leq
(2\mu_1,2\nu_1,2\mu_2,2\nu_2,\cdots)\leq\Phi^C(\mu;\nu), 
\] 
and (2) applied with
$\lambda=\Phi^C(\mu;\nu)$ implies that
$\Phi^C(\rho;\sigma)\leq \Phi^C(\mu;\nu)$.
\end{proof}

\begin{exam}
When $n=2$, the map $\Phi^C:\cQ_2\to\cP_4^C$ is as follows:
\[
\begin{split}
\Phi^C((2);\varnothing)&=(4),\
\Phi^C((1);(1))=(2^2),\
\Phi^C(\varnothing;(2))=(2^2),\\
\Phi^C((1^2);\varnothing)&=(21^2),\
\Phi^C(\varnothing;(1^2))=(1^4). 
\end{split}
\]
Note in particular that $\Phi^C$ is not injective.
\end{exam}

Recall the definition of the sub-poset of $\cQ_n$ consisting
of \emph{$C$-distinguished} bipartitions:
\[ 
\cQ_n^C=\{(\mu;\nu)\in\cQ_n\,|\,\mu_i\geq\nu_i-1,\,\nu_i\geq\mu_{i+1}-1,
\text{ for all }i\}.
\]
We define a map $\hPhi^C:\cP_{2n}^C\to\cQ_n^C$ as follows. (It is actually a special
case of the general $2$-quotient construction described in 
\cite[Example I.1.8]{macdonald}.)
Start with a partition $\lambda\in\cP_{2n}^C$ and modify it by
halving all even parts, and
replacing any string of odd parts $2k+1,2k+1,\cdots,2k+1,2k+1$ with $k,k+1,\cdots,k,k+1$
(the part $2k+1$ has even multiplicity by assumption). The result is a quasi-partition
of $n$, and we let $\hPhi^C(\lambda)$ be the corresponding bipartition.

\begin{prop} \label{prop:hphiC1}
Let $\pi,\lambda\in\cP_{2n}^C$.
\begin{enumerate}
\item $\hPhi^C(\lambda)\in\cQ_n^C$.
\item If $\pi\leq\lambda$, then $\hPhi^C(\pi)\leq\hPhi^C(\lambda)$.
\end{enumerate} 
\end{prop}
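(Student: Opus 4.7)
My plan for (1) is to verify that the quasi-partition $\xi=(\mu_1,\nu_1,\mu_2,\nu_2,\ldots)$ attached to $\hPhi^C(\lambda)=(\mu;\nu)$ satisfies $\xi_{j+1}-\xi_j\le 1$ for every $j$; this is precisely the pair of inequalities $\mu_i\ge\nu_i-1$ and $\nu_i\ge\mu_{i+1}-1$ that define $\cQ_n^C$. A short case analysis on how the positions $j,j+1$ sit with respect to the even parts and strings of odd parts of $\lambda$ suffices: within a string of odd parts $2t+1$, the value $\xi$ alternates between $t$ and $t+1$, so $|\xi_{j+1}-\xi_j|=1$; at an even/even boundary $\xi_j-\xi_{j+1}=(\lambda_j-\lambda_{j+1})/2\ge 0$; at a boundary where an even part is followed by the start of a string of $2t+1$'s, the evenness and decreasingness of $\lambda$ force $\lambda_j\ge 2t+2$ and hence $\xi_j\ge t+1>t=\xi_{j+1}$; and at a boundary where a string of $2t+1$'s ends (so $\xi_j=t+1$) one checks in each parity subcase that $\xi_{j+1}\le t$.

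For (2) the plan is to compare partial sums using a single formula together with a parity trick. First observe directly from the definitions that $\Phi^C\circ\hPhi^C=\mathrm{id}_{\cP_{2n}^C}$: the pattern $t,t+1,\ldots,t,t+1$ that $\hPhi^C$ produces in place of a string of $2m$ copies of $2t+1$ doubles to $2t,2t+2,\ldots,2t,2t+2$, and the replacement rule of $\Phi^C$ collapses each pair $(2t,2t+2)$ to $(2t+1,2t+1)$, while the halved even parts are simply restored by the doubling. Specializing the identity derived in the proof of Proposition~\ref{prop:phiC}(2) to $(\mu;\nu)=\hPhi^C(\lambda)$ (so $\Phi^C(\mu;\nu)=\lambda$) then yields
\[
\sum_{i=1}^k\lambda_i=2\sum_{i=1}^k\xi_i+\delta_k,\qquad\delta_k:=\max\{0,\xi_{k+1}-\xi_k\}\in\{0,1\}
\]
by part (1); the analogous formula holds for $\pi$ with its own quasi-partition $\xi'$ and some $\delta_k'\in\{0,1\}$.

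Unwinding the definition of the order on $\cQ_n$, the inequality $\hPhi^C(\pi)\le\hPhi^C(\lambda)$ is equivalent to $\sum_{i=1}^k\xi_i'\le\sum_{i=1}^k\xi_i$ for all $k$, which by the formula above becomes
\[
\sum_{i=1}^k\lambda_i\ge 2\sum_{i=1}^k\xi_i'+\delta_k.
\]
Now apply Proposition~\ref{prop:phiC}(2) to the bipartition $\hPhi^C(\pi)$ and the partition $\lambda$: since $\lambda\ge\pi=\Phi^C(\hPhi^C(\pi))$, one deduces $\lambda\ge(2\xi_1',2\xi_2',\ldots)$ in dominance, that is, $\sum_{i=1}^k\lambda_i\ge 2\sum_{i=1}^k\xi_i'$ for every $k$. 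This is already the required inequality when $\delta_k=0$. When $\delta_k=1$, the left-hand side $\sum_{i=1}^k\lambda_i=2\sum_{i=1}^k\xi_i+1$ is odd while $2\sum_{i=1}^k\xi_i'$ is even, so the inequality is strict and, by integrality, improves to $\sum_{i=1}^k\lambda_i\ge 2\sum_{i=1}^k\xi_i'+1$, exactly as required.

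I expect the main obstacle to be spotting the parity trick that handles the $\delta_k=1$ case; once the formula $\sum\lambda_i=2\sum\xi_i+\delta_k$ (via $\Phi^C\circ\hPhi^C=\mathrm{id}$) is in place, everything else is routine bookkeeping. The case analysis in (1) is similarly routine but needs attention to the parity within a string of odd parts.
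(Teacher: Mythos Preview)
Your proof is correct. The paper offers no details beyond declaring both parts ``easy from the definitions,'' so there is nothing substantive to compare against. Your argument for (2) can be streamlined: the identity $\sum_{i=1}^k\lambda_i=2\sum_{i=1}^k\xi_i+\delta_k$ with $\delta_k\in\{0,1\}$ is exactly the statement that $\sum_{i=1}^k\xi_i=\bigl\lfloor\tfrac12\sum_{i=1}^k\lambda_i\bigr\rfloor$ (the parity of $\sum_{i=1}^k\lambda_i$ forces the value of $\delta_k$), and from this formula (2) follows in one line by monotonicity of the floor function. This absorbs both the appeal to Proposition~\ref{prop:phiC}(2) and the parity trick, and is likely the ``easy'' argument the authors had in mind; your route via $\Phi^C\circ\hPhi^C=\mathrm{id}$ and the dominance comparison is a valid, if slightly more circuitous, way to reach the same conclusion.
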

\begin{proof}
Both parts are easy from the definitions.
\end{proof}

\begin{prop} \label{prop:hphiC}
The relationship between $\Phi^C$ and $\hPhi^C$ is as follows.
\begin{enumerate}
\item For any $\lambda\in\cP_{2n}^C$, $\Phi^C(\hPhi^C(\lambda))=\lambda$.
\item For any $(\rho;\sigma)\in\cQ_n$, $(\rho;\sigma)\leq\hPhi^C(\Phi^C(\rho;\sigma))$,
with equality if and only if $(\rho;\sigma)\in\cQ_n^C$.
\item The posets $\cQ_n^C$ and $\cP_{2n}^C$ are isomorphic via the inverse maps
$\Phi^C$ \textup{(}restricted to $\cQ_n^C$\textup{)} and $\hPhi^C$.
\end{enumerate}
\end{prop}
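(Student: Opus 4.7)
The plan is to prove (1), (2), and (3) in order; parts (1) and the equality case of (2) form the technical core, while (3) is essentially formal.

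For part (1), I would compute $\Phi^C(\hPhi^C(\lambda))$ directly by tracking what happens to each part of $\lambda$. Each even part is halved by $\hPhi^C$ and restored unchanged upon doubling; a maximal odd run $\lambda_j = \cdots = \lambda_{j+2l-1} = 2k+1$ is converted by $\hPhi^C$ into the block $(k, k+1, k, k+1, \ldots, k, k+1)$, which when doubled gives $(2k, 2k+2, 2k, 2k+2, \ldots)$, and the $\Phi^C$ replacement rule then turns each inversion pair $(2k, 2k+2)$ at positions $(j,j+1), (j+2,j+3),\ldots$ into $(2k+1, 2k+1)$, reconstituting the run; the intervening pairs $(2k+2, 2k)$ are non-inversions and are left alone. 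One must check that no replacement occurs at the boundaries between blocks, which reduces to noting that the value just before (resp.\ just after) the run, after being processed through $\hPhi^C$ and doubled, is always at least $2k+2$ (resp.\ at most $2k$), so no boundary inversion arises.

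For part (2), set $\pi = (2\rho_1, 2\sigma_1, 2\rho_2, 2\sigma_2, \ldots)$, $\xi = \Phi^C(\rho;\sigma)$, and let $\eta'$ denote the doubled interleaved quasi-partition of $\hPhi^C(\xi)$. The inequality $(\rho;\sigma) \leq \hPhi^C(\xi)$ is equivalent, by the definition of the order on $\cQ_n$ and the fact that doubling preserves dominance, to $\pi \leq \eta'$ as quasi-partitions of $2n$. By the definition of $\Phi^C$, the partition $\xi$ is obtained from $\pi$ by replacing each non-overlapping ``inversion pair'' $(a,b)$ with $a<b$ by $(\tfrac{a+b}{2}, \tfrac{a+b}{2})$, and the same description applies to $\eta'$ by (1). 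The cumulative deficit $\xi_1 + \cdots + \xi_k - (\pi_1 + \cdots + \pi_k)$ is nonzero precisely at positions $k$ beginning a $\pi$-inversion, and analogously for $\eta'$. The combinatorial key is that the $l$ non-overlapping $\pi$-inversions producing an odd run of $\xi$ of length $2l$ cannot extend past the endpoints of the run (by maximality of the run, since otherwise the run would be longer) and cannot leave gaps inside it (since an uncovered position would force $\xi_i = \pi_i$ to be odd, contradicting that $\pi$ has only even entries); hence they must sit at $(j,j+1), (j+2,j+3), \ldots, (j+2l-2, j+2l-1)$, exactly where the $\eta'$-inversions lie. At each such position the $\pi$-deficit is at least $1$ (since $\pi_k < \pi_{k+1}$ are even with sum $4k+2$, forcing $\pi_k \leq 2k$), matching the deficit $1$ of $\eta'$, while at all other positions the $\eta'$-deficit is $0$. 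This yields $\pi \leq \eta'$. For the equality case, if $(\rho;\sigma) \in \cQ_n^C$ then the conditions $\rho_i \geq \sigma_i - 1$ and $\sigma_i \geq \rho_{i+1} - 1$ force every $\pi$-inversion to have the form $(2a, 2a+2)$, giving $\pi = \eta'$ entry by entry; conversely $\pi = \eta'$ immediately yields $(\rho;\sigma) = \hPhi^C(\xi) \in \cQ_n^C$ by Proposition~\ref{prop:hphiC1}(1).

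Part (3) then follows formally: (1) gives $\Phi^C \circ \hPhi^C = \mathrm{id}_{\cP_{2n}^C}$, and the equality case of (2) gives $\hPhi^C \circ \Phi^C = \mathrm{id}_{\cQ_n^C}$, so $\Phi^C|_{\cQ_n^C}$ and $\hPhi^C$ are mutually inverse bijections; together with Proposition~\ref{prop:phiC}(3) and Proposition~\ref{prop:hphiC1}(2), this gives the desired inverse poset isomorphisms. The main obstacle is the alignment argument in (2), namely the claim that the $\pi$-inversions generating each odd run of $\xi$ must fill exactly the same positions as the $\eta'$-inversions in that run; this rests jointly on the non-overlapping of inversions and on the maximality of odd runs in $\xi$.
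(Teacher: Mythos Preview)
Your proof is correct and matches the paper's approach: the paper's own proof simply says ``parts (1) and (2) are proved by short calculations, and part (3) follows from these together with Propositions~\ref{prop:phiC} and~\ref{prop:hphiC1}'', and you have supplied exactly those calculations in detail, with (3) derived formally in the same way.

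One minor point of comparison: for part (2), the paper's intended ``short calculation'' is most naturally read as the explicit description of $\hPhi^C(\Phi^C(\rho;\sigma))$ recorded immediately afterwards in Definition~\ref{defn:supC} --- namely, that $(\rho;\sigma)^C$ is obtained from $(\rho;\sigma)$ by replacing each pair $\rho_i,\sigma_i$ with $\lfloor\tfrac{\rho_i+\sigma_i}{2}\rfloor,\lceil\tfrac{\rho_i+\sigma_i}{2}\rceil$ whenever $\rho_i<\sigma_i-1$, and similarly for $\sigma_i,\rho_{i+1}$. From this description the inequality $(\rho;\sigma)\le(\rho;\sigma)^C$ and the equality criterion are immediate. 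Your deficit-comparison argument reaches the same conclusion by a slightly different route (comparing the partial sums of $\pi$ and $\eta'$ against those of $\xi$); the alignment claim --- that within each odd run of $\xi$ the $\pi$-inversions sit at exactly the positions $(j,j+1),(j+2,j+3),\ldots$ --- is the same combinatorial content as verifying the explicit formula, and your justification of it (no boundary overhang by maximality of the run, no internal gaps by parity) is sound. The one notational slip is the reuse of $k$ for both an index and the half-value of the odd run in the sentence ``$\pi_k<\pi_{k+1}$ are even with sum $4k+2$''; the intended meaning is clear but you should rename one of them.
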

\begin{proof}
Parts (1) and (2) are proved by short calculations, and part (3) follows from these
together with Propositions \ref{prop:phiC} and \ref{prop:hphiC1}.
\end{proof}

\begin{defn}  \label{defn:supC}
For any $(\rho;\sigma)\in\cQ_n$, let $(\rho;\sigma)^C$ denote
$\hPhi^C(\Phi^C(\rho;\sigma))\in\cQ_n^C$. Explicitly, $(\rho;\sigma)^C$ is the
bipartition constructed from $(\rho;\sigma)$ as follows.
Whenever $\rho_i<\sigma_i-1$, 
replace $\rho_i,\sigma_i$ with $\lfloor\frac{\rho_i+\sigma_i}{2}\rfloor,
\lceil\frac{\rho_i+\sigma_i}{2}\rceil$; whenever $\sigma_i<\rho_{i+1}-1$,
replace $\sigma_i,\rho_{i+1}$ with $\lfloor\frac{\sigma_i+\rho_{i+1}}{2}\rfloor,
\lceil\frac{\sigma_i+\rho_{i+1}}{2}\rceil$; as before, it is clear that 
we never have to make overlapping replacements.
\end{defn}

\begin{cor} \label{cor:hphiC}
For $(\rho;\sigma)\in\cQ_n$ and $(\mu;\nu)\in\cQ_n^C$, the following are equivalent.
\begin{enumerate}
\item $(\rho;\sigma)^C=(\mu;\nu)$.
\item $\Phi^C(\rho;\sigma)=\Phi^C(\mu;\nu)$.
\item $(\rho;\sigma)\leq(\mu;\nu)$, and for any $(\tau;\upsilon)\in\cQ_n^C$ such that
$(\rho;\sigma)\leq(\tau;\upsilon)$, we have $(\mu;\nu)\leq(\tau;\upsilon)$.
\end{enumerate}
\end{cor}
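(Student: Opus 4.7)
The corollary is essentially a formal consequence of the functorial properties of $\Phi^C$ and $\hPhi^C$ established in Propositions~\ref{prop:phiC}, \ref{prop:hphiC1}, and \ref{prop:hphiC}. The content is that $(\rho;\sigma)^C$ is the unique minimum element of $\cQ_n^C$ lying above $(\rho;\sigma)$ in the dominance order, a Spaltenstein-style characterization analogous to the one used in the introduction for special bipartitions. My plan is to prove the cycle (1)$\Leftrightarrow$(2), then (1)$\Rightarrow$(3), then (3)$\Rightarrow$(2).

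For (1)$\Leftrightarrow$(2): applying $\Phi^C$ to the equation $\hPhi^C(\Phi^C(\rho;\sigma))=(\mu;\nu)$ and using Proposition~\ref{prop:hphiC}(1) (valid because $\Phi^C(\rho;\sigma)\in\cP_{2n}^C$ by Proposition~\ref{prop:phiC}(1)) yields (2). Conversely, from (2) we apply $\hPhi^C$ to both sides, noting that $\hPhi^C(\Phi^C(\mu;\nu))=(\mu;\nu)$ by Proposition~\ref{prop:hphiC}(2) since $(\mu;\nu)\in\cQ_n^C$.

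For (1)$\Rightarrow$(3): Proposition~\ref{prop:hphiC}(2) gives $(\rho;\sigma)\leq(\rho;\sigma)^C=(\mu;\nu)$. If $(\tau;\upsilon)\in\cQ_n^C$ satisfies $(\rho;\sigma)\leq(\tau;\upsilon)$, then the order-preserving composition $\hPhi^C\circ\Phi^C$ from Propositions~\ref{prop:phiC}(3) and \ref{prop:hphiC1}(2) gives $(\mu;\nu)=\hPhi^C(\Phi^C(\rho;\sigma))\leq\hPhi^C(\Phi^C(\tau;\upsilon))=(\tau;\upsilon)$, the last equality being Proposition~\ref{prop:hphiC}(2) applied to $(\tau;\upsilon)$.

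The one step that requires a small twist is (3)$\Rightarrow$(2). Here I would take $(\tau;\upsilon)=(\rho;\sigma)^C$, which lies in $\cQ_n^C$ by Proposition~\ref{prop:hphiC1}(1) and satisfies $(\rho;\sigma)\leq(\rho;\sigma)^C$ by Proposition~\ref{prop:hphiC}(2); the minimality in (3) then yields $(\mu;\nu)\leq(\rho;\sigma)^C$. Applying the order-preserving map $\Phi^C$ and using Proposition~\ref{prop:hphiC}(1) for $\Phi^C((\rho;\sigma)^C)$, this gives $\Phi^C(\mu;\nu)\leq\Phi^C(\rho;\sigma)$. The reverse inequality $\Phi^C(\rho;\sigma)\leq\Phi^C(\mu;\nu)$ comes from the hypothesis $(\rho;\sigma)\leq(\mu;\nu)$ via Proposition~\ref{prop:phiC}(3), giving (2). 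There is no real obstacle; the only subtlety is remembering to feed $(\rho;\sigma)^C$ itself into the universal property asserted by (3).
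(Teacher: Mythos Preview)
Your proof is correct and is exactly the kind of formal unpacking the paper has in mind when it says ``This follows formally from Proposition~\ref{prop:hphiC}.'' The cycle (1)$\Leftrightarrow$(2), (1)$\Rightarrow$(3), (3)$\Rightarrow$(2) you give is the natural way to organize the argument, and each step uses precisely the properties of $\Phi^C$ and $\hPhi^C$ recorded in Propositions~\ref{prop:phiC}, \ref{prop:hphiC1}, and~\ref{prop:hphiC}.
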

\begin{proof}
This follows formally from Proposition \ref{prop:hphiC}.
\end{proof}

With this notation we can reformulate the standard parametrization of nilpotent symplectic
orbits in characteristic $\neq 2$ and the (Zariski) closure ordering on them. 
Let $V$ be a $2n$-dimensional vector space over $\F$ with
a nondegenerate alternating form $\langle\cdot,\cdot\rangle$, let $Sp(V)$ be the
stabilizer of this form, and let $\cN(\fsp(V))$ denote the variety of nilpotent elements
in its Lie algebra.
If $\F$ has positive characteristic $p$ (an odd prime, because 
characteristic $2$ is not allowed in this section), we assume that 
we have a fixed $\F_p$-structure of $V$ such that 
$\langle\cdot,\cdot\rangle$ is defined over $\F_p$; then $Sp(V)$ and $\cN(\fsp(V))$ are
defined over the finite subfield $\F_q$ with $q$ elements for any $q=p^s$.

\begin{thm}[Gerstenhaber, Hesselink] \cite[Theorems 5.1.3, 6.2.5]{cm}
\label{thm:gerst-C}
\begin{enumerate}
\item
The $Sp(V)$-orbits in $\cN(\fsp(V))$ are in bijection with $\cQ_n^C$.
For $(\mu;\nu)\in\cQ_n^C$, the corresponding orbit $\cO_{\mu;\nu}^{C}$ 
has dimension $2(n^2-b(\mu;\nu))$, and 
consists of those
$x\in\cN(\fsp(V))$ whose Jordan type is $\Phi^C(\mu;\nu)$.
\item For $(\rho;\sigma),(\mu;\nu)\in\cQ_n^C$,
\[
\cO_{\rho;\sigma}^{C}\subseteq\overline{\cO_{\mu;\nu}^{C}}\Longleftrightarrow
(\rho;\sigma)\leq(\mu;\nu).
\]
\end{enumerate}
\end{thm}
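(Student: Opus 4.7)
The plan is to deduce this reformulation from the classical Gerstenhaber--Hesselink theorem in its partition-parametrized form (as stated in \cite[Thm.~5.1.3, 6.2.5]{cm}), together with the bipartition/partition dictionary just established. The classical theorem asserts that $Sp(V)$-orbits in $\cN(\fsp(V))$ are in bijection with $\cP_{2n}^C$ via Jordan type, and that the closure order corresponds to dominance. By Proposition \ref{prop:hphiC}(3), $\hPhi^C\colon\cP_{2n}^C\isomto\cQ_n^C$ is a poset isomorphism with inverse $\Phi^C|_{\cQ_n^C}$. Composing yields the bijection $(\mu;\nu)\mapsto\cO^C_{\mu;\nu}$ of part (1) (with Jordan type $\Phi^C(\mu;\nu)$), and part (2) follows immediately because the bijection preserves partial orders.

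It remains to verify the dimension formula $\dim\cO^C_{\mu;\nu}=2(n^2-b(\mu;\nu))$. I would start from the standard formula for the dimension of a nilpotent orbit in $\fsp(V)$ of Jordan type $\lambda\in\cP_{2n}^C$, namely
$$\dim\cO_\lambda = 2n^2+n-\tfrac{1}{2}\Bigl(\sum_i (\lambda'_i)^2 + r(\lambda)\Bigr),$$
where $\lambda'$ is the transpose and $r(\lambda)=|\{i:\lambda_i\text{ odd}\}|$. This reduces the claim to the purely combinatorial identity
$$\sum_i(\lambda'_i)^2+r(\lambda)=2n+4\,b(\mu;\nu) \qquad\text{when }\lambda=\Phi^C(\mu;\nu),\ (\mu;\nu)\in\cQ_n^C.$$
Each pair of neighbouring entries of $(2\mu_1,2\nu_1,2\mu_2,2\nu_2,\dots)$ is either left untouched or replaced by $(s+t,s+t)$ under the defining rule of $\Phi^C$; the pair produces an odd contribution to $r(\lambda)$ exactly when the corresponding sum is odd, and the change in $\sum(\lambda'_i)^2$ is easily expressed in terms of $\mu_i,\nu_i,\mu_{i+1}$. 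A careful inspection of these two cases, weighted as in the definition $b(\mu;\nu)=\sum_i\bigl((2i-2)\mu_i+(2i-1)\nu_i\bigr)$, yields the identity.

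A more conceptual alternative, consistent with the paper's viewpoint, is to invoke Shoji's description of the Springer correspondence in type $C_n$: the labeling of orbits by $\cQ_n^C$ is chosen precisely so that $\cO^C_{\mu;\nu}$ corresponds to the irreducible character $\chi^{\mu;\nu}$, and one then applies the general Springer identity $\dim\cO=2(\dim\cB-b(\chi))$ with $\dim\cB=n^2$ to conclude.

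The main obstacle is really the combinatorial bookkeeping in the dimension calculation, since the geometric content is entirely classical. The care required is in tracking, for each ``averaging'' step of $\Phi^C$, both the shape of the transpose $\lambda'$ and the parity of $\lambda_i$, and then matching the result against the weighted sum defining $b(\mu;\nu)$.
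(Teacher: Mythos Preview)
Your proposal is correct and matches the paper's approach exactly: the paper states the theorem as a citation to \cite{cm} and explains in the surrounding text that the reformulation from $\cP_{2n}^C$ to $\cQ_n^C$ goes via the poset isomorphism of Proposition~\ref{prop:hphiC}(3), while the dimension formula is ``easily deduced from the formula given in terms of $\Phi^C(\mu;\nu)$ in \cite{cm}, or from the fact that the dimension of the Springer fibre for a nilpotent element of $\cO_{\mu;\nu}^{C}$ equals $b(\mu;\nu)$.'' You have supplied both of these alternatives for the dimension, so there is nothing to add.
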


The statement in \cite{cm} uses, instead of $\cQ_n^C$, the isomorphic poset $\cP_{2n}^C$,
so that the parametrization of orbits is by Jordan types.
As mentioned above, the motivation for switching to the poset $\cQ_n^C$
is that the orbit $\cO_{\mu;\nu}^{C}$, with the trivial local system, corresponds
to the irreducible representation of $W(C_n)$ with character $\chi^{\mu;\nu}$
under the Springer correspondence: this is shown in \cite[Theorem 3.3]{shoji:springer}.

\begin{rmk} \label{rmk:shoji}
Shoji's conventions in \cite{shoji:springer} differ from those above in two ways.
Firstly, Shoji uses Springer's original construction
of Springer representations, from which the one now in use is obtained
by tensoring with the sign representation. Secondly, 
Shoji's parametrization of irreducible characters of $\{\pm 1\}\wr S_n$
differs from ours by transposing both partitions of the bipartition.
\end{rmk}

\begin{rmk}
Theorem \ref{thm:gerst-C} refers only to $C$-distinguished bipartitions, whereas
we defined $\Phi^C$ on all bipartitions.
An irreducible representation of $W(C_n)$ indexed by a bipartition
$(\mu;\nu)$ which is not $C$-distinguished corresponds under the Springer correspondence
to a non-trivial local system on some orbit in $\cN(\fsp(V))$; the Jordan type
of this orbit is an element of $\cP_{2n}^C$, which is in general not equal to
$\Phi^C(\mu;\nu)$. (See \cite[Theorem 3.3]{shoji:springer} or \cite[13.3]{carter}
for the rule.) However, there is a Springer-correspondence interpretation of the
whole map $\Phi^C$ in characteristic $2$, as explained in the next section.
\end{rmk}

\begin{rmk}
The dimension formula for $\cO_{\mu;\nu}^{C}$ stated above is easily deduced from the
formula given in terms of $\Phi^C(\mu;\nu)$ in \cite{cm}, or from the fact that the
dimension of the Springer fibre for a nilpotent element of $\cO_{\mu;\nu}^{C}$ equals
$b(\mu;\nu)$.
\end{rmk}

It is clear from Theorem \ref{thm:gerst-C} that if $\F$ has characteristic $p$,
each orbit $\cO_{\mu;\nu}^C$ is defined over $\F_q$ for any $q=p^s$.
Note that in general,
$\cO_{\mu;\nu}^C(\F_q)$ is the union of multiple $Sp(V)(\F_q)$-orbits; we are not
concerned with the latter kind of orbits.

\begin{prop} \label{prop:poly-C}
For each $(\mu;\nu)\in\cQ_n^C$, there is a polynomial 
$P_{\mu;\nu}^C(t)\in\Z[t]$, independent of $\F$, such that for any finite subfield
$\F_q$ of $\F$, $|\cO_{\mu;\nu}^C(\F_q)|=P_{\mu;\nu}^C(q)$. 
\end{prop}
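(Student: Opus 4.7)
The plan is to realise $\cO^C_{\mu;\nu}$ as a homogeneous space $Sp(V)/Z$ and then apply the Lang--Steinberg theorem to extract a polynomial count.

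First I would fix an $\F_p$-rational representative $x_0\in\cO^C_{\mu;\nu}(\F_p)$, which exists because a standard Jordan representative for the partition $\Phi^C(\mu;\nu)$ can be built from a symplectic basis defined over $\F_p$ and so has entries in $\F_p$. The orbit morphism $g\mapsto g\cdot x_0$ then identifies $\cO^C_{\mu;\nu}$ with $Sp(V)/Z$, where $Z:=Z_{Sp(V)}(x_0)$ is a smooth algebraic group defined over $\F_p$. Next I would invoke the standard structure theory of $Z$ (see e.g.\ \cite[Chapter 6]{cm}): the identity component $Z^0$ is a semidirect product $L\ltimes U$ with $U$ unipotent and $L$ a product of symplectic groups and split orthogonal groups, and the component group $A=Z/Z^0$ is an elementary abelian $2$-group. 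All the relevant numerical data (the types of the factors of $L$, $\dim U$, and the rank of $A$) are combinatorial functions of $\Phi^C(\mu;\nu)$ alone, hence independent of the characteristic of $\F$.

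A standard orbit-counting argument using Lang's theorem for the connected group $Sp(V)$ then yields
\[
|\cO^C_{\mu;\nu}(\F_q)| \;=\; \frac{|Sp(V)(\F_q)|}{|Z^0(\F_q)|},
\]
provided $x_0$ is chosen so that Frobenius acts trivially on $A$ (in which case the $|A|$ distinct $Sp(V)(\F_q)$-orbits in $\cO^C_{\mu;\nu}(\F_q)$ each have size $|Sp(V)(\F_q)|/|Z(\F_q)|$, and the factors of $|A|$ cancel). The numerator is the well-known polynomial $q^{n^2}\prod_{i=1}^{n}(q^{2i}-1)$, while $|Z^0(\F_q)|=q^{\dim U}|L(\F_q)|$ is also a polynomial in $q$ with integer coefficients depending only on $(\mu;\nu)$. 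The quotient is the desired $P^C_{\mu;\nu}(t)\in\Z[t]$.

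The main subtlety is the bookkeeping around the possibly disconnected centralizer and the need to secure triviality of the Frobenius action on $A$; both reduce to a careful choice of rational standard representative, and are by now entirely routine in the literature on orbit counts in classical groups.
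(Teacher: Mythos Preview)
Your approach via Lang--Steinberg and the centralizer structure is natural, but the orbit-counting step contains a genuine error. The claim that the $|A|$ distinct $Sp(V)(\F_q)$-orbits in $\cO^C_{\mu;\nu}(\F_q)$ \emph{each} have size $|Sp(V)(\F_q)|/|Z(\F_q)|$ is false in general. The $G^F$-orbit indexed by $a\in H^1(F,A)$ has stabilizer of order $|Z^{F_a}|$, where $F_a$ is Frobenius twisted by a lift of $a$; having $F$ act trivially on $A$ guarantees $H^1(F,A)=A$, but it does \emph{not} make the inner twist of $Z^0$ by $a$ trivial. Whenever an even part of $\Phi^C(\mu;\nu)$ has even multiplicity $m$, the reductive factor $SO_m$ of $L$ gets twisted by an element of $O_m\setminus SO_m$, yielding the non-split form over $\F_q$, and $|SO_m^+(\F_q)|\neq|SO_m^-(\F_q)|$. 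So the formula $|\cO^C_{\mu;\nu}(\F_q)|=|Sp(V)(\F_q)|/|Z^0(\F_q)|$ is not valid.

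A minimal counterexample is $n=2$, Jordan type $(2,2)$, i.e.\ $(\mu;\nu)=((1);(1))$. Here $Z^0$ has reductive part a split one-dimensional torus ($SO_2$) and a $3$-dimensional unipotent radical, while $A\cong\Z/2$ acts on the torus by inversion. Thus $|(Z^0)^F|=q^3(q-1)$ but $|(Z^0)^{F_a}|=q^3(q+1)$ for the nontrivial $a$. The correct count is
\[
|\cO^C_{(1);(1)}(\F_q)|=\frac{|Sp_4(\F_q)|}{2q^3(q-1)}+\frac{|Sp_4(\F_q)|}{2q^3(q+1)}=q^2(q^4-1),
\]
whereas your formula gives $|Sp_4(\F_q)|/\bigl(q^3(q-1)\bigr)=q(q+1)(q^4-1)$.

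The argument can be salvaged by honestly summing $|G^F|/|Z^{F_a}|$ over $a\in A$ and checking that for each even-rank orthogonal factor the contributions from the split and non-split forms combine to a polynomial in $q$; this works but is more than routine bookkeeping. The paper itself takes a different route and does not attempt a self-contained proof: it cites \cite[A.6]{lusztig:unip}, where the polynomiality (uniformly in all characteristics) is established via the theory of nilpotent pieces, and remarks that in odd characteristic it already follows from Shoji's Green-function computations \cite{shoji:green} together with the Grothendieck trace formula, which forces a rational-function point count to be an integer polynomial.
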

\begin{proof}
The most convenient reference is the last sentence of \cite[A.6]{lusztig:unip}, which
includes a statement about characteristic $2$ also. 
For historical accuracy, we should note that the result in odd
characteristic has been known for much longer: for example, it follows from
\cite[Section 4]{shoji:green} that $|\cO_{\mu;\nu}^C(\F_q)|$ is a rational function 
of $q$ independent of the characteristic, and by the Grothendieck Trace Formula, this
rational function must be an integer polynomial.
\end{proof}

In \cite{lusztig:notes}, the polynomial $P_{\mu;\nu}^C(t)$ is given an alternative interpretation in terms of
equivariant Betti numbers of nilpotent orbits, in the case where $\F=\C$:
\begin{equation} \label{eqn:betti-C}
P_{\mu;\nu}^C(t)
=t^{2(n^2-b(\mu;\nu))}\,\prod_{i=1}^n(1-t^{-2i})\,
\sum_{j\geq 0}\dim H_{2j}^{Sp(V)}(\cO_{\mu;\nu}^C)\,
t^{-j}.
\end{equation}
An explicit rational-function formula for the power series on the 
right-hand side is given in \cite[Lemmas 5.2, 5.3]{lusztig:notes}; it follows from 
this that 
\begin{equation} \label{eqn:explicit-C}
P_{\mu;\nu}^C(t)
=\frac{t^{2(n^2-b(\mu;\nu))}\,\displaystyle\prod_{i=1}^n(1-t^{-2i})}
{\displaystyle\prod_{a\geq 1}\prod_{i=1}^{f_a^C(\mu;\nu)}(1-t^{-2i})},
\end{equation}
where $f_a^C(\mu;\nu)=\lfloor\frac{m_a(\Phi^C(\mu;\nu))}{2}\rfloor$, and
$m_a(\Phi^C(\mu;\nu))$ means the multiplicity of $a$ as a part of the partition
$\Phi^C(\mu;\nu)$.
Hence $P_{\mu;\nu}^C(t)$ is actually 
a polynomial in $t^2$. We will not need to use the explicit formula 
\eqref{eqn:explicit-C} in this paper.

\subsection{The nilpotent cone of type $B_n$}

The situation in type $B_n$ is very similar. Let 
$\cP_{2n+1}^B$ denote the sub-poset of $\cP_{2n+1}$
consisting of partitions of $2n+1$ in which every even part has
even multiplicity.
We define a map $\Phi^B:\cQ_n\to\cP_{2n+1}^B$
as follows. For any $(\rho;\sigma)\in\cQ_n$, consider the infinite sequence
\begin{equation*}
(2\rho_1+1,2\sigma_1-1,2\rho_2+1,2\sigma_2-1,2\rho_3+1,2\sigma_3-1,\cdots).
\end{equation*}
Whenever two consecutive terms $s,t$ satisfy $s<t$, replace them with
$\frac{s+t}{2},\frac{s+t}{2}$ (note that $\frac{s+t}{2}\in\N$); 
clearly we never have to make overlapping replacements. 
The terms in the infinite tail $-1,1,-1,1,\cdots$ are all replaced with $0$,
so the result is a composition $\Phi^B(\rho;\sigma)$ of $2n+1$.

\begin{prop} \label{prop:phiB}
Let $(\rho;\sigma),(\mu;\nu)\in\cQ_n$.
\begin{enumerate}
\item
$\Phi^B(\rho;\sigma)\in\cP_{2n+1}^B$.
\item
If $\lambda\in\cP_{2n+1}$, then $\lambda\geq
(2\rho_1+1,2\sigma_1-1,2\rho_2+1,2\sigma_2-1,\cdots)$ if and only if
$\lambda\geq\Phi^B(\rho;\sigma)$.
\item
If $(\rho;\sigma)\leq(\mu;\nu)$, then $\Phi^B(\rho;\sigma)\leq\Phi^B(\mu;\nu)$. 
\end{enumerate}
\end{prop}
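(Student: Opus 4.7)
My plan is to follow the proof of Proposition \ref{prop:phiC} very closely. Let $\pi=(2\rho_1+1,2\sigma_1-1,2\rho_2+1,2\sigma_2-1,\ldots)$ and write $\xi=\Phi^B(\rho;\sigma)$. A preliminary observation is that $\pi$ is itself a quasi-partition ($\pi_i\geq\pi_{i+2}$ for all $i$), which is immediate from $\rho_i\geq\rho_{i+1}$ and $\sigma_i\geq\sigma_{i+1}$; as in the type-$C$ case, this forces the ascent set $\{i:\pi_i<\pi_{i+1}\}$ to contain no two consecutive integers, so the replacement rule is unambiguous and each $\xi_i$ equals one of $\pi_i$, $\frac{\pi_{i-1}+\pi_i}{2}$ (when $\pi_{i-1}<\pi_i$), or $\frac{\pi_i+\pi_{i+1}}{2}$ (when $\pi_i<\pi_{i+1}$); with the convention $\pi_0=+\infty$ this takes care of the case $i=1$.

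For part (1), four things need checking. The inequality $\xi_i\geq\xi_{i+1}$ follows from the same case analysis as in Proposition \ref{prop:phiC}(1), using $\pi_{i-1}\geq\pi_{i+1}$ and $\pi_i\geq\pi_{i+2}$. The parity condition -- every even part of $\xi$ has even multiplicity -- is immediate because every entry of $\pi$ is odd, so even parts of $\xi$ arise only from replacements, and each replacement produces two equal parts. Non-negativity $\xi_i\geq 0$ requires slightly more care than in type $C$: the only entry of $\pi$ that can be negative is $\pi_{2j}=-1$ (which occurs when $\sigma_j=0$), but in that case $\pi_{2j}=-1<1\leq\pi_{2j+1}$ forces the replacement at $(2j,2j+1)$, producing $\xi_{2j}=\sigma_j+\rho_{j+1}=\rho_{j+1}\geq 0$. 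To pin down $|\xi|=2n+1$, choose $k$ large enough that $\rho_j=\sigma_j=0$ for $j>k$: the partial sum of $\pi$ through position $2k+1$ is exactly $2(|\rho|+|\sigma|)+1=2n+1$, the pair $(2k+1,2k+2)=(1,-1)$ is not replaced, all replacements within positions $\leq 2k+1$ preserve the pairwise sum, and for $j\geq 2k+2$ the tail pairs $(-1,1)$ force $\xi_j=0$, giving $|\xi|=2n+1$.

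Part (2) follows, as in type $C$, from the identity $\sum_{i=1}^k\xi_i=\sum_{i=1}^{k-1}\pi_i+\max\{\pi_k,\frac{\pi_k+\pi_{k+1}}{2}\}$, which gives $\xi\geq\pi$ immediately, and whose converse uses the averaging argument applied to $\lambda\in\cP_{2n+1}$ with $\lambda\geq\pi$. Part (3) then reduces formally to part (2): writing $\pi$ and $\pi'$ for the interleaved sequences associated to $(\rho;\sigma)$ and $(\mu;\nu)$, a direct comparison of partial sums shows that $(\rho;\sigma)\leq(\mu;\nu)$ in $\cQ_n$ is equivalent to $\pi\leq\pi'$, and combined with $\pi'\leq\Phi^B(\mu;\nu)$ this yields $\pi\leq\Phi^B(\mu;\nu)$, so part (2) applied with $\lambda=\Phi^B(\mu;\nu)$ gives $\Phi^B(\rho;\sigma)\leq\Phi^B(\mu;\nu)$. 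The main extra obstacle beyond the type-$C$ case is the bookkeeping surrounding the $-1$'s in $\pi$: both non-negativity of $\xi$ and the fact that $|\xi|=2n+1$ (rather than $2n$) must be checked by carefully identifying where the infinite tail stabilizes after the replacements are performed.
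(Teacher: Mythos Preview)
Your proof is correct and follows exactly the approach the paper intends: the paper's own proof consists of the single sentence ``Essentially the same as the proof of Proposition~\ref{prop:phiC}.'' You have simply spelled out the extra bookkeeping (non-negativity of $\xi$ and the computation $|\xi|=2n+1$ via the behaviour of the tail $-1,1,-1,1,\ldots$) that the paper leaves implicit.
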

\begin{proof}
Essentially the same as the proof of Proposition \ref{prop:phiC}.
\end{proof}

\begin{exam}
When $n=2$, the map $\Phi^B:\cQ_2\to\cP_5^B$ is as follows:
\[
\begin{split}
\Phi^B((2);\varnothing)&=(5),\
\Phi^B((1);(1))=(31^2),\
\Phi^B(\varnothing;(2))=(2^21),\\
\Phi^B((1^2);\varnothing)&=(31^2),\
\Phi^B(\varnothing;(1^2))=(1^5). 
\end{split}
\]
\end{exam}

Recall the definition of the sub-poset of $\cQ_n$ consisting
of \emph{$B$-distinguished} bipartitions:
\[ 
\cQ_n^B=\{(\mu;\nu)\in\cQ_n\,|\,\mu_i\geq\nu_i-2,\,\nu_i\geq\mu_{i+1},
\text{ for all }i\}.
\]
We define a map $\hPhi^B:\cP_{2n+1}^B\to\cQ_n^B$ as follows.
Start with $\lambda\in\cP_{2n+1}^B$.
If $\lambda_i$ is odd, replace it with $\frac{\lambda_i+(-1)^i}{2}$;
and for every string of equal even parts $\lambda_i=\lambda_{i+1}=\cdots=2k$ 
with $\lambda_{i-1}>2k$, replace them
with $k,k,\cdots$ if $i$ is even or with $k-1,k+1,k-1,k+1,\cdots$ if $i$ is odd. 
(This applies to the infinite tail of zeroes, when $k=0$ and $i$ is necessarily
even; they all remain zero.) The result is a quasi-partition of $n$,
and we let $\hPhi^B(\lambda)$ be the corresponding bipartition. 

\begin{prop} \label{prop:hphiB1}
Let $\pi,\lambda\in\cP_{2n+1}^B$.
\begin{enumerate}
\item $\hPhi^B(\lambda)\in\cQ_n^B$.
\item If $\pi\leq\lambda$, then $\hPhi^B(\pi)\leq\hPhi^B(\lambda)$.
\end{enumerate} 
\end{prop}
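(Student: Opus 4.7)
My plan is to verify both parts by direct combinatorial calculation from the definition of $\hPhi^B$, in the same spirit as the proof sketched for the analogous Proposition~\ref{prop:hphiC1}. Let $(\mu;\nu)=\hPhi^B(\lambda)$, and let $q=(q_1,q_2,q_3,\ldots)=(\mu_1,\nu_1,\mu_2,\nu_2,\ldots)$ be the corresponding quasi-partition, so that $q_j$ is obtained from $\lambda_j$ by one of three local rules depending on whether $\lambda_j$ is odd, an even part whose maximal equal-value string begins at an odd index, or an even part whose string begins at an even index.

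For (1), the defining conditions of $\cQ_n^B$ translate under the quasi-partition identification to
\[
q_j\geq q_{j+1}-2\text{ for }j\text{ odd},\qquad q_j\geq q_{j+1}\text{ for }j\text{ even}.
\]
I would check these by enumerating the five possibilities for $(\lambda_j,\lambda_{j+1})$: both odd; odd then even (necessarily $\lambda_j>\lambda_{j+1}$ with a gap of at least $1$, and in fact at least $3$ if the odd value is to be the smaller, etc.); even then odd; equal even (interior of a string); and distinct even values (boundary between two strings, forcing a gap of at least $2$). In each case the explicit formula for $q_j,q_{j+1}$ together with the parity of $j$ and the parity of the starting index of each string gives the inequality immediately.

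For (2), dominance on bipartitions coincides with dominance on the interleaved quasi-partitions, so we must show that the partial sums $q(\pi)_1+\cdots+q(\pi)_k$ are weakly increasing as we replace $\pi$ by a larger $\lambda\in\cP_{2n+1}^B$. Writing $q_j=\tfrac{\lambda_j+a_j}{2}$ with $a_j\in\{-2,-1,0,1,2\}$ determined by the local parity data, we have $2(q_1+\cdots+q_k)=(\lambda_1+\cdots+\lambda_k)+(a_1+\cdots+a_k)$, and the correction $a_1+\cdots+a_k$ depends only on the parity of $k$ and on whether $\lambda_k$ ends its string. The cleanest way to conclude is to reduce to elementary covering moves in $\cP_{2n+1}^B$---a box moved from one row down to a later row, possibly shifting two boxes simultaneously so as to preserve the even-multiplicity-of-even-parts condition---and check for each such move that no partial sum of $q$ strictly decreases.

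The main obstacle is the bookkeeping in (2): because $a_j$ depends both on the parity of $\lambda_j$ and on the parity of the starting index of its string, a single covering move in $\cP^B_{2n+1}$ can alter several $a_j$'s simultaneously (it may split or merge strings, or shift the starting-index parity of a string). Enumerating the covering relations in $\cP^B_{2n+1}$ and tracking the net effect on the $a_j$'s is where the case analysis concentrates; I expect a short finite list of genuinely distinct cases, each yielding a nonnegative change in every partial sum of $q$.
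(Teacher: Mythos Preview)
Your plan is correct and matches what the paper's one-line proof (``Both parts are easy from the definitions'') leaves implicit: direct case analysis from the definition of $\hPhi^B$. One minor addition: in (1) remember also to verify the quasi-partition inequalities $q_j\ge q_{j+2}$, not only the two $B$-distinguished conditions you list.
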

\begin{proof}
Both parts are easy from the definitions.
\end{proof}

\begin{prop} \label{prop:hphiB}
The relationship between $\Phi^B$ and $\hPhi^B$ is as follows.
\begin{enumerate}
\item For any $\lambda\in\cP_{2n+1}^B$, $\Phi^B(\hPhi^B(\lambda))=\lambda$.
\item For any $(\rho;\sigma)\in\cQ_n$, $(\rho;\sigma)\leq\hPhi^B(\Phi^B(\rho;\sigma))$,
with equality if and only if $(\rho;\sigma)\in\cQ_n^B$.
\item The posets $\cQ_n^B$ and $\cP_{2n+1}^B$ are isomorphic via the inverse maps
$\Phi^B$ \textup{(}restricted to $\cQ_n^B$\textup{)} and $\hPhi^B$.
\end{enumerate}
\end{prop}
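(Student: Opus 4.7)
The plan is to reduce all three parts to careful but essentially local bookkeeping on interleaved quasi-partitions, paralleling the type-$C$ argument for Proposition~\ref{prop:hphiC}.

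For part~(1), I would start with $\lambda \in \cP_{2n+1}^B$ and decompose it into odd entries and maximal runs of equal even parts (of even length by the $\cP_{2n+1}^B$ hypothesis). An odd entry $\lambda_i$ maps under $\hPhi^B$ to $(\lambda_i + (-1)^i)/2$, and the corresponding entry of the shifted sequence $(2\mu_1+1, 2\nu_1-1, 2\mu_2+1,\ldots)$ used by $\Phi^B$ is then $\lambda_i$ again, with no averaging triggered because $\lambda$ is a partition. For a maximal even run of value $2K$ starting at position $i$, $\hPhi^B$ produces $K, K, \ldots, K$ if $i$ is even, or $K-1, K+1, \ldots, K-1, K+1$ if $i$ is odd; feeding this back through $\Phi^B$ yields a block whose entries alternate between $2K \pm 1$, and the averaging rule restores the block to $2K, 2K, \ldots, 2K$. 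Checking that distinct blocks do not interact at their boundaries is routine given the strict drops in $\lambda$.

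For part~(2), set $\lambda = \Phi^B(\rho;\sigma)$ and $(\mu;\nu) = \hPhi^B(\lambda)$. The averaging in $\Phi^B$ at the $(2k-1, 2k)$ pair triggers exactly when $\rho_k \leq \sigma_k - 2$, and at the $(2k, 2k+1)$ pair exactly when $\sigma_k \leq \rho_{k+1}$; the fact that $\rho$ and $\sigma$ are individually partitions rules out overlapping averagings and controls how adjacent averaged pairs merge into longer runs. One then analyses two regimes: the \emph{boundary} cases $\rho_k = \sigma_k - 2$ or $\sigma_k = \rho_{k+1}$, where the averaged value is even and $\hPhi^B$ exactly reverses the averaging (the merging condition forces equal parameters across adjacent pairs, matching the alternating or constant expansion prescribed by the parity of the starting position), and the \emph{strict} cases $\rho_k \leq \sigma_k - 3$ or $\sigma_k \leq \rho_{k+1} - 1$, where $\hPhi^B$'s expansion strictly dominates the original pair (whether the averaged value is odd, giving two equal odd parts mapped to $m, m+1$, or even, giving an $m-1, m+1$ expansion). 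Summing, one obtains $(\rho;\sigma) \leq (\mu;\nu)$, with equality iff only boundary-type averagings occur, i.e., iff $(\rho;\sigma) \in \cQ_n^B$.

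For part~(3), the maps $\Phi^B|_{\cQ_n^B}$ and $\hPhi^B$ are order-preserving by Propositions~\ref{prop:phiB} and~\ref{prop:hphiB1}, and (1) together with the equality case of (2) shows that they are mutually inverse bijections between $\cQ_n^B$ and $\cP_{2n+1}^B$, hence poset isomorphisms. The main obstacle is the bookkeeping in (2) when several adjacent boundary-case averagings merge into a single long even run in $\lambda$: the $\hPhi^B$-expansion of such a run is governed by the parity of its starting position rather than by the original pair decomposition, and one must verify that these two viewpoints produce the same output. This is the counterpart of the step the excerpt records only as ``short calculations'' in Propositions~\ref{prop:phiC}--\ref{prop:hphiC}, and is slightly more intricate in type~$B$ because the roles of odd and even parts are exchanged.
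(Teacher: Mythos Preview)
Your proposal is correct and follows essentially the same approach as the paper, which records only that parts (1) and (2) are ``short calculations'' and that (3) follows formally from (1), (2), and Propositions~\ref{prop:phiB} and~\ref{prop:hphiB1}. Your outline supplies precisely the local case analysis that the paper suppresses, including the correct identification of the boundary cases $\rho_k=\sigma_k-2$ and $\sigma_k=\rho_{k+1}$ as the ones where $\hPhi^B$ exactly reverses the averaging, and your concern about merged even runs is legitimate but resolvable (unaveraged entries are always odd, so even runs arise only from averaged pairs, and merged runs of the same type have the parity of starting position matching the required $\hPhi^B$ expansion).
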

\begin{proof}
Again, parts (1) and (2) are proved by short calculations, and part (3) follows from these
together with Propositions \ref{prop:phiB} and \ref{prop:hphiB1}.
\end{proof}

\begin{defn} \label{defn:supB}
For any $(\rho;\sigma)\in\cQ_n$, let $(\rho;\sigma)^B$ denote 
$\hPhi^B(\Phi^B(\rho;\sigma))\in\cQ_n^B$. Explicitly, $(\rho;\sigma)^B$ is the
bipartition constructed from $(\rho;\sigma)$ as follows.
Whenever $\rho_i<\sigma_i-2$, 
replace $\rho_i,\sigma_i$ with $\lceil\frac{\rho_i+\sigma_i}{2}\rceil-1,
\lfloor\frac{\rho_i+\sigma_i}{2}\rfloor+1$; whenever $\sigma_i<\rho_{i+1}$,
replace $\sigma_i,\rho_{i+1}$ with $\lceil\frac{\sigma_i+\rho_{i+1}}{2}\rceil,
\lfloor\frac{\sigma_i+\rho_{i+1}}{2}\rfloor$; as before, it is clear that 
we never have to make overlapping replacements.
\end{defn}

\begin{cor} \label{cor:hphiB}
For $(\rho;\sigma)\in\cQ_n$ and $(\mu;\nu)\in\cQ_n^B$, the following are equivalent.
\begin{enumerate}
\item $(\rho;\sigma)^B=(\mu;\nu)$.
\item $\Phi^B(\rho;\sigma)=\Phi^B(\mu;\nu)$.
\item $(\rho;\sigma)\leq(\mu;\nu)$, and for any $(\tau;\upsilon)\in\cQ_n^B$ such that
$(\rho;\sigma)\leq(\tau;\upsilon)$, we have $(\mu;\nu)\leq(\tau;\upsilon)$.
\end{enumerate}
\end{cor}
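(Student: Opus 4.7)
The plan is to deduce all three equivalences formally from Proposition \ref{prop:hphiB}, using also the monotonicity of $\Phi^B$ (Proposition \ref{prop:phiB}(3)) and $\hPhi^B$ (Proposition \ref{prop:hphiB1}(2)).

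For the equivalence $(1)\Leftrightarrow(2)$, I would observe that by definition $(\rho;\sigma)^B=\hPhi^B(\Phi^B(\rho;\sigma))$, so if $(\rho;\sigma)^B=(\mu;\nu)$ then applying $\Phi^B$ and using Proposition \ref{prop:hphiB}(1) gives $\Phi^B(\rho;\sigma)=\Phi^B(\hPhi^B(\Phi^B(\rho;\sigma)))=\Phi^B(\mu;\nu)$. Conversely, if $\Phi^B(\rho;\sigma)=\Phi^B(\mu;\nu)$, apply $\hPhi^B$ to both sides and use Proposition \ref{prop:hphiB}(2) for $(\mu;\nu)\in\cQ_n^B$ to conclude that $(\rho;\sigma)^B=\hPhi^B(\Phi^B(\mu;\nu))=(\mu;\nu)$.

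For $(1)\Rightarrow(3)$, assume $(\rho;\sigma)^B=(\mu;\nu)$. Proposition \ref{prop:hphiB}(2) immediately gives $(\rho;\sigma)\leq(\mu;\nu)$. If $(\tau;\upsilon)\in\cQ_n^B$ satisfies $(\rho;\sigma)\leq(\tau;\upsilon)$, then monotonicity of $\Phi^B$ and $\hPhi^B$ yields
\[
(\mu;\nu)=\hPhi^B(\Phi^B(\rho;\sigma))\leq\hPhi^B(\Phi^B(\tau;\upsilon))=(\tau;\upsilon),
\]
where the last equality uses Proposition \ref{prop:hphiB}(2) again, applied to $(\tau;\upsilon)\in\cQ_n^B$.

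For the converse $(3)\Rightarrow(1)$, observe that $(\rho;\sigma)^B$ is an element of $\cQ_n^B$ dominating $(\rho;\sigma)$, so the minimality hypothesis in (3) forces $(\mu;\nu)\leq(\rho;\sigma)^B$. On the other hand, the inequality $(\rho;\sigma)\leq(\mu;\nu)$ combined with monotonicity of $\Phi^B$ and $\hPhi^B$ gives $(\rho;\sigma)^B\leq\hPhi^B(\Phi^B(\mu;\nu))=(\mu;\nu)$, where the equality again uses Proposition \ref{prop:hphiB}(2). The two inequalities together give $(\rho;\sigma)^B=(\mu;\nu)$. No step of this argument is a genuine obstacle; the whole corollary is a formal manipulation, and the one point that requires a moment of care is pinning down both inequalities in the $(3)\Rightarrow(1)$ direction to force equality.
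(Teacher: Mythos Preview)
Your proof is correct and is precisely the formal deduction from Proposition~\ref{prop:hphiB} (together with the monotonicity statements in Propositions~\ref{prop:phiB}(3) and~\ref{prop:hphiB1}(2)) that the paper alludes to when it says the corollary ``follows formally from Proposition~\ref{prop:hphiB}.'' You have simply spelled out the details the paper leaves to the reader.
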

\begin{proof}
This follows formally from Proposition \ref{prop:hphiB}.
\end{proof}

Let $\tV$ be a $(2n+1)$-dimensional vector space over $\F$ with
a nondegenerate quadratic form $\tQ$, let $SO(\tV)$ be the identity component of the
stabilizer of this form, and let $\cN(\fo(\tV))$ denote the variety of nilpotent elements
in its Lie algebra. If $\F$ has characteristic $p$ (an odd prime), we assume that
we have a fixed $\F_p$-structure on $\tV$ such that $\tQ$ is defined over $\F_p$; then
$SO(\tV)$ and $\cN(\fo(\tV))$ are defined over $\F_q$ for any $q=p^s$.

\begin{thm}[Gerstenhaber, Hesselink] \cite[Theorems 5.1.2, 6.2.5]{cm}
\label{thm:gerst-B}
\begin{enumerate}
\item
The $SO(\tV)$-orbits in $\cN(\fo(\tV))$ are in bijection with $\cQ_n^B$.
For $(\mu;\nu)\in\cQ_n^B$, the corresponding orbit $\cO_{\mu;\nu}^{B}$ 
has dimension $2(n^2-b(\mu;\nu))$, and 
consists of those
$x\in\cN(\fo(\tV))$ whose Jordan type is $\Phi^B(\mu;\nu)$.
\item For $(\rho;\sigma),(\mu;\nu)\in\cQ_n^B$,
\[
\cO_{\rho;\sigma}^{B}\subseteq\overline{\cO_{\mu;\nu}^{B}}\Longleftrightarrow
(\rho;\sigma)\leq(\mu;\nu).
\]
\end{enumerate}
\end{thm}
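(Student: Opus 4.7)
The plan is to transport the classical Gerstenhaber--Hesselink classification from \cite[Theorems 5.1.2, 6.2.5]{cm}, which is stated in terms of Jordan types in $\cP_{2n+1}^B$, across the poset isomorphism $\hPhi^B\colon\cP_{2n+1}^B\isomto\cQ_n^B$ provided by Proposition \ref{prop:hphiB}(3). Once the classical statement is in hand, everything except the dimension formula and the identification with the Springer-correspondence labelling is formal.

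For part (1), I would recall from the classical theorem that the $SO(\tV)$-orbits in $\cN(\fo(\tV))$ are in bijection with $\cP_{2n+1}^B$ via the map sending an orbit to the Jordan type of any of its elements. For $(\mu;\nu)\in\cQ_n^B$, declare $\cO_{\mu;\nu}^B$ to be the orbit of Jordan type $\Phi^B(\mu;\nu)$; by Proposition \ref{prop:hphiB}(3) this labels each orbit exactly once. For the dimension formula I would route through the Springer correspondence: by the type-$B$ analogue of \cite[Theorem 3.3]{shoji:springer}, interpreted with the conventions of Remark \ref{rmk:shoji}, the pair consisting of the orbit $\cO_{\mu;\nu}^B$ and its trivial local system is sent to the irreducible character $\chi^{\mu;\nu}$, so the Springer fibre above a point of $\cO_{\mu;\nu}^B$ has dimension equal to $b(\mu;\nu)$; combined with $\dim\cN(\fo(\tV))=2n^2$ this gives $\dim\cO_{\mu;\nu}^B = 2(n^2-b(\mu;\nu))$.

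Part (2) is then essentially automatic. Gerstenhaber--Hesselink asserts that $\cO_{\rho;\sigma}^B\subseteq\overline{\cO_{\mu;\nu}^B}$ if and only if $\Phi^B(\rho;\sigma)\leq\Phi^B(\mu;\nu)$ in the dominance order on $\cP_{2n+1}^B$; since Proposition \ref{prop:hphiB}(3) says that $\Phi^B|_{\cQ_n^B}$ is a poset isomorphism onto $\cP_{2n+1}^B$, this is equivalent to $(\rho;\sigma)\leq(\mu;\nu)$ in $\cQ_n^B$.

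The genuine work, and the only real obstacle, lies in the compatibility check at the Springer-correspondence step: one must verify that the combinatorial recipe defining $\Phi^B$ agrees with the partition-from-bipartition rule implicit in \cite[Theorem 3.3]{shoji:springer}, after applying the sign-twist and the transposition of Remark \ref{rmk:shoji}. This is a finite combinatorial comparison of the two explicit algorithms; it is where the heaviest bookkeeping lives, but it presents no conceptual difficulty. (An alternative, avoiding the Springer correspondence, is to derive the dimension formula directly from the standard Jordan-type formula for $\dim\cO_\lambda$ in $\cN(\fo(\tV))$ given in \cite{cm}, by substituting $\lambda = \Phi^B(\mu;\nu)$ and simplifying; this is a purely combinatorial identity.)
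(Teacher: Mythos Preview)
Your proposal is correct and matches the paper's treatment: the theorem is cited from \cite{cm} in its Jordan-type form and transported to $\cQ_n^B$ via the poset isomorphism of Proposition~\ref{prop:hphiB}(3), with the dimension formula justified either through the Springer-fibre interpretation of $b(\mu;\nu)$ or by direct comparison with the formula in \cite{cm}. The paper, like you, flags the Shoji compatibility (Remark~\ref{rmk:shoji}) as the point requiring care but does not spell out the combinatorial check.
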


As in type $C$, this result is usually stated in terms of
$\cP_{2n+1}^B$ instead of $\cQ_n^B$. Again, it follows from
\cite[Theorem 3.3]{shoji:springer}
that the orbit $\cO_{\mu;\nu}^{B}$, with the trivial local system, corresponds
to the irreducible representation of $W(B_n)$ with character $\chi^{\mu;\nu}$
under the Springer correspondence (bearing in mind Remark \ref{rmk:shoji}).


It is clear from Theorem \ref{thm:gerst-B} that, if $\F$ has characteristic $p$, 
each orbit $\cO_{\mu;\nu}^B$ is defined over $\F_q$ for any $q=p^s$. As in type $C$,
we are not concerned with the $O(\tV)(\F_q)$-orbits into which $\cO_{\mu;\nu}^B(\F_q)$
splits.

\begin{prop} \label{prop:poly-B}
For each $(\mu;\nu)\in\cQ_n^B$, there is a polynomial 
$P_{\mu;\nu}^B(t)\in\Z[t]$, independent of $\F$, such that for any finite subfield
$\F_q$ of $\F$, $|\cO_{\mu;\nu}^B(\F_q)|=P_{\mu;\nu}^B(q)$. 
\end{prop}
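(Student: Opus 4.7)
The plan is to follow exactly the strategy used for Proposition \ref{prop:poly-C}, with the orthogonal group in place of the symplectic group. By Theorem \ref{thm:gerst-B}, the orbit $\cO_{\mu;\nu}^B$ is a single $SO(\tV)$-orbit defined over $\F_p$. An application of the Lang-Steinberg theorem to the connected group $SO(\tV)$ shows that $SO(\tV)(\F_q)$ acts transitively on $\cO_{\mu;\nu}^B(\F_q)$, so
\[
|\cO_{\mu;\nu}^B(\F_q)|=\frac{|SO(\tV)(\F_q)|}{|C_{SO(\tV)}(x)(\F_q)|}
\]
for any $\F_q$-rational point $x$. Since $|SO(\tV)(\F_q)|$ is given by the standard polynomial in $q$ independent of the odd prime $p$, the task reduces to showing that $|C_{SO(\tV)}(x)(\F_q)|$ is given by a polynomial in $q$ independent of $p$.

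For this, I would invoke the uniform description of centralizers of nilpotent elements in orthogonal Lie algebras in characteristic $\neq 2$: the centralizer of $x$ is the semidirect product of a connected unipotent radical, whose dimension depends only on the Jordan type $\Phi^B(\mu;\nu)$, with a reductive Levi factor that is a product of symplectic and orthogonal groups indexed by the multiplicities of the parts of $\Phi^B(\mu;\nu)$. The point count of each constituent factor is a polynomial in $q$ independent of $p$, so the same is true of their product, and hence of the full centralizer.

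The cleanest citation is the same one invoked for Proposition \ref{prop:poly-C}: the last sentence of \cite[A.6]{lusztig:unip} covers both types $B$ and $C$ and both parities of characteristic in a single uniform statement. Alternatively, one can appeal to Shoji's Green function computations in \cite[Section 4]{shoji:green} to see that $|\cO_{\mu;\nu}^B(\F_q)|$ is a rational function of $q$ independent of the odd characteristic, and then use the Grothendieck-Lefschetz trace formula to upgrade this rational function to an integer polynomial.

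There is no real obstacle here: the proposition is a direct analog of Proposition \ref{prop:poly-C}, and every step of that proof has a known type-$B$ counterpart in the literature. The only point requiring a word of care is the appeal to Lang-Steinberg in the disconnected group $O(\tV)$; we circumvent this by working with the connected group $SO(\tV)$ and noting (as observed in the discussion preceding Proposition \ref{prop:poly-B}) that $\cO_{\mu;\nu}^B$ is already a single $SO(\tV)$-orbit.
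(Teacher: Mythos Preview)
Your citation-based argument---invoking \cite[A.6]{lusztig:unip}, or alternatively \cite[Section 4]{shoji:green} together with the Grothendieck trace formula---is correct and is precisely what the paper does: its entire proof reads ``The same comments as in the proof of Proposition \ref{prop:poly-C} apply here.''

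However, the direct argument you sketch first contains a genuine error. Lang--Steinberg applied to the connected group $SO(\tV)$ guarantees that the $F$-stable orbit $\cO_{\mu;\nu}^B$ has an $\F_q$-point, but it does \emph{not} imply that $SO(\tV)(\F_q)$ acts transitively on $\cO_{\mu;\nu}^B(\F_q)$. Transitivity would require the centralizer $C_{SO(\tV)}(x)$ to be connected, and for nilpotent elements in odd orthogonal Lie algebras this is generally false: the component group is an elementary abelian $2$-group whose order depends on the distinct odd parts of $\Phi^B(\mu;\nu)$. The paper itself flags this issue: just before Proposition \ref{prop:poly-C} it notes that $\cO_{\mu;\nu}^C(\F_q)$ is in general ``the union of multiple $Sp(V)(\F_q)$-orbits'' (and repeats the remark in type $B$), and in the proof of Proposition \ref{prop:poly-ex} it says the exotic case is ``easier than Propositions \ref{prop:poly-C} and \ref{prop:poly-B}'' precisely because there the stabilizers \emph{are} connected. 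So your displayed formula $|\cO_{\mu;\nu}^B(\F_q)|=|SO(\tV)(\F_q)|/|C_{SO(\tV)}(x)(\F_q)|$ is not valid, and the direct route cannot be completed without summing over the $F$-conjugacy classes in the component group and controlling the twisted centralizer counts. Stick with the citation.
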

\begin{proof}
The same comments as in the proof of Proposition \ref{prop:poly-C} apply here.
\end{proof}

As in type $C$, \cite{lusztig:notes} gives, in the $\F=\C$ case,
\begin{equation} \label{eqn:betti-B}
P_{\mu;\nu}^B(t)
=t^{2(n^2-b(\mu;\nu))}\,\prod_{i=1}^n(1-t^{-2i})\,
\sum_{j\geq 0}\dim H_{2j}^{SO(\tV)}(\cO_{\mu;\nu}^B)\,
t^{-j},
\end{equation}
and also an explicit formula for
$P_{\mu;\nu}^B(t)$ analogous to \eqref{eqn:explicit-C}.

\subsection{The exotic nilpotent cone}

Hitherto we have used proper sub-posets of $\cQ_n$ to parametrize orbits; 
we will now see that the full poset $\cQ_n$ 
occurs in the parametrization of orbits in the exotic
nilpotent cone. 

As in \S2.1, $V$ denotes a $2n$-dimensional vector space over $\F$ with
a nondegenerate alternating form $\langle\cdot,\cdot\rangle$. Define
\[ S=\{x\in\End(V)\,|\,\langle xv,v\rangle =0,\text{ for all }v\in V\}. \] 
Under our assumption
that $\F$ has characteristic different from $2$, 
$S$ is the $Sp(V)$-invariant complementary subspace to
$\fsp(V)$ in $\fgl(V)=\End(V)$.
Syu Kato's \emph{exotic nilpotent cone} of type $C_n$ is 
\[ \fN=V\times(S\cap\cN(\fgl(V)))=\{(v,x)\in V\times S\,|\,x\text{ nilpotent}\}. \] 

To describe the parametrization of $Sp(V)$-orbits in $\fN$, we first need to recall
the parametrization of $GL(V)$-orbits in the 
\emph{enhanced nilpotent cone} $V\times\cN(\fgl(V))$. As explained in \cite[Section 2]{ah},
these are in bijection with bipartitions of $2n$. For $(\tau;\varphi)\in\cQ_{2n}$,
the orbit $\cO_{\tau;\varphi}\subset V\times\cN(\fgl(V))$ consists of
those $(v,x)$ such that
\begin{itemize}
\item $x$ has Jordan type $\tau+\varphi$
(the partition obtained by adding $(\tau_1,\tau_2,\cdots)$ and 
$(\varphi_1,\varphi_2,\cdots)$ term-by-term), and 
\item $v$ belongs to the open
$GL(V)^x$-orbit in $\sum_i x^{\varphi_i}(\ker(x^{\tau_i+\varphi_i}))$, where
$GL(V)^x$ is the stabilizer of $x$ in $GL(V)$. 
\end{itemize}
For a more explicit
description, see \cite[Proposition 2.3]{ah}.

For any partition $\lambda$, let $\lambda\cup\lambda$ denote
the `duplicated' partition $(\lambda_1,\lambda_1,\lambda_2,\lambda_2,\cdots)$.

\begin{thm}[Kato, Achar--Henderson] \textup{\cite{kato:exotic} for (1), 
\cite[Section 6]{ah} for (2).}
\label{thm:exotic}
\begin{enumerate}
\item 
The $Sp(V)$-orbits in $\fN$ are in bijection with $\cQ_n$. For 
$(\mu;\nu)\in\cQ_n$, the corresponding orbit $\Oo_{\mu;\nu}$ 
has dimension $2(n^2-b(\mu;\nu))$ and 
is the intersection of $\fN$ with the orbit
$\cO_{\mu\cup\mu;\nu\cup\nu}$
in the enhanced nilpotent cone $V\times\cN(\fgl(V))$. 
\item 
For $(\rho;\sigma),(\mu;\nu)\in\cQ_n$,
\[
\Oo_{\rho;\sigma}\subseteq\overline{\Oo_{\mu;\nu}}\Longleftrightarrow
(\rho;\sigma)\leq(\mu;\nu).
\]
\end{enumerate}
\end{thm}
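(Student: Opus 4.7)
The plan is to reduce Theorem \ref{thm:exotic} to the parametrization of $GL(V)$-orbits on the enhanced nilpotent cone $V\times\cN(\fgl(V))$ by $\cQ_{2n}$ recalled above, exploiting that $\fN$ is a closed $Sp(V)$-stable subvariety. For part (1), the first task is to identify which enhanced orbits $\cO_{\tau;\varphi}$ meet $\fN$ and to show each such intersection is a single $Sp(V)$-orbit. Polarizing the condition $\langle xv,v\rangle=0$ yields $\langle xv,w\rangle=\langle v,xw\rangle$, so the elements of $S$ are precisely those self-adjoint with respect to $\langle\cdot,\cdot\rangle$. A Witt-type argument then shows that for any nilpotent self-adjoint $x$, cyclic subspaces $\F[x]w$ are totally isotropic and pair up into non-degenerate $2k$-dimensional symplectic summands (one for each Jordan block of size $k$). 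Consequently the Jordan type of $x$ is forced to have the form $\lambda\cup\lambda$ for some partition $\lambda$ of $n$.

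With such an $x$ fixed, one analyses the position vector $v$ inside each symplectic summand. The stabilizer $Sp(V)^x$ acts on each matched pair of cyclic subspaces, and a normal-form computation parallel to the enhanced-cone analysis from \cite{ah} brings $v$ to a canonical representative whose enhanced-orbit data must have the doubled shape $(\mu\cup\mu;\nu\cup\nu)$ for a unique $(\mu;\nu)\in\cQ_n$. This simultaneously shows that $\cO_{\tau;\varphi}\cap\fN$ is nonempty precisely for doubled bipartitions, that each nonempty intersection is a single $Sp(V)$-orbit $\Oo_{\mu;\nu}$, and that $(\mu;\nu)\mapsto\Oo_{\mu;\nu}$ is the claimed bijection. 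The dimension $2(n^2-b(\mu;\nu))$ then comes from computing $\dim Sp(V)^{(v,x)}$ for the canonical representative and subtracting from $\dim Sp(V)=n(2n+1)$; the relevant centralizer is the symplectic-compatible part of the enhanced stabilizer, whose dimension is tracked by $b(\mu;\nu)$.

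For part (2), since $\fN$ is closed in $V\times\cN(\fgl(V))$, the inclusion $\overline{\Oo_{\mu;\nu}}\subseteq\overline{\cO_{\mu\cup\mu;\nu\cup\nu}}\cap\fN$ is automatic. By the known enhanced closure order from \cite{ah}, the right-hand side is the union of $\Oo_{\rho;\sigma}$ with $(\rho\cup\rho;\sigma\cup\sigma)\leq(\mu\cup\mu;\nu\cup\nu)$, and a direct calculation of partial sums in the interleaved-dominance order confirms this is equivalent to $(\rho;\sigma)\leq(\mu;\nu)$. For the reverse inclusion, for each covering relation $(\rho;\sigma)\lessdot(\mu;\nu)$ in $\cQ_n$ one produces an explicit one-parameter family in $\fN$ specializing to a representative of $\Oo_{\rho;\sigma}$ at the special value and lying generically in $\Oo_{\mu;\nu}$; the essential requirement is that the family remain inside $\fN$ rather than merely in $V\times\cN(\fgl(V))$, which can be arranged by deforming symplectically matched pairs on each block simultaneously.

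The main obstacle will be the Witt-type pairing and simultaneous normalization of $v$ across block-pairs in part (1). The enhanced-cone argument handles one copy at a time, but the symplectic form couples the two members of each Jordan-block pair, and showing that no new $Sp(V)$-invariants arise — so that the doubled bipartition $(\mu\cup\mu;\nu\cup\nu)$ really is a complete invariant on $\fN$ — requires careful bookkeeping. Once this analysis is complete, part (2) follows formally from the enhanced closure order together with the short combinatorial comparison of dominance on $\cQ_n$ and on doubled bipartitions in $\cQ_{2n}$.
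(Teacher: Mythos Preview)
The paper does not give its own proof of Theorem~\ref{thm:exotic}; in Section~2 it is stated as a known result with citations to \cite{kato:exotic} and \cite[Section~6]{ah}. Your outline is essentially the approach of those references: classify nilpotent $x\in S$ by showing they are self-adjoint with duplicated Jordan type, then normalize $v$ under $Sp(V)^x$ as in \cite[Theorem~6.1]{ah} to see that each nonempty intersection $\cO_{\mu\cup\mu;\nu\cup\nu}\cap\fN$ is a single $Sp(V)$-orbit, and deduce the closure order from the enhanced one together with the straightforward equivalence of $(\rho;\sigma)\leq(\mu;\nu)$ in $\cQ_n$ with $(\rho\cup\rho;\sigma\cup\sigma)\leq(\mu\cup\mu;\nu\cup\nu)$ in $\cQ_{2n}$. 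At this level of sketch there is no gap; the step you flag as the main obstacle (the simultaneous normalization of $v$ across block-pairs, showing the doubled bipartition is a complete invariant) is exactly the content of \cite[Theorem~6.1]{ah}.

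It may interest you that the paper \emph{does} prove the characteristic-$2$ analogue (Theorem~\ref{thm:kato}) by a genuinely different route: rather than classifying $Sp(V)$-orbits on $\fN$ directly, it uses Kato's $Sp(V)$-equivariant bijection $\Psi:\fN\to\cN(\fsp(V))$ to pull back Spaltenstein's classification of orbits in $\cN(\fsp(V))$ (Theorem~\ref{thm:spalt-C}), which immediately yields the count $|\cQ_n|$; one then only checks that the explicit representatives from \cite{ah} meet each enhanced orbit $\cO_{\mu\cup\mu;\nu\cup\nu}$, and part~(2) is referred back to the characteristic~$\neq 2$ argument of \cite[Theorem~6.3]{ah}. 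This bypasses the normal-form analysis you identify as the main obstacle, at the cost of importing Spaltenstein's result; but the bijection $\Psi$ is special to characteristic~$2$, so your direct approach is the right one for the statement as given.
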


From the above description of orbits in the enhanced nilpotent cone, it follows that
$\Oo_{\mu;\nu}$ consists of those $(v,x)\in\fN$ such that
\begin{itemize}
\item $x$ has Jordan type $(\mu+\nu)\cup(\mu+\nu)$, and 
\item $v$ belongs to the open $Sp(V)^x$-orbit in
$\sum_i x^{\nu_i}(\ker(x^{\mu_i+\nu_i}))$. 
\end{itemize}
For an explicit representative of each
orbit $\Oo_{\mu;\nu}$, see the proof of \cite[Theorem 6.1]{ah}.
It is clear that if $\F$ has characteristic $p$, each orbit $\Oo_{\mu;\nu}$ is defined
over $\F_q$ for any $q=p^s$.

\begin{prop} \label{prop:poly-ex}
For each $(\mu;\nu)\in\cQ_n$, there is a polynomial 
$P_{\mu;\nu}(t)\in\Z[t]$, independent of $\F$, such that for any finite subfield
$\F_q$ of $\F$, $|\Oo_{\mu;\nu}(\F_q)|=P_{\mu;\nu}(q)$. 
\end{prop}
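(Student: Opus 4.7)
The argument parallels Propositions \ref{prop:poly-B} and \ref{prop:poly-C}. The plan is to show that $|\Oo_{\mu;\nu}(\F_q)|$ is a rational function of $q$ independent of the characteristic $p \neq 2$; by the Grothendieck--Lefschetz trace formula, any such rational function that takes integer values on all prime powers must in fact be a polynomial in $\Z[t]$.

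First I would fix an explicit representative $(v_0, x_0) \in \Oo_{\mu;\nu}$ defined over the prime field $\F_p$, taken from the construction in the proof of \cite[Theorem 6.1]{ah}. Since $Sp(V)$ is connected, Lang's theorem combined with orbit-stabilizer gives
\[
|\Oo_{\mu;\nu}(\F_q)| \;=\; \sum_{\xi \in H^1(\F_q,\, \pi_0(Z))} \frac{|Sp(V)(\F_q)|}{|Z_\xi(\F_q)|},
\]
where $Z = Sp(V)^{(v_0,x_0)}$ is the stabilizer and $Z_\xi$ runs over its Frobenius twists. The key step is to exhibit a Levi decomposition $Z = L \ltimes U$ in which $L$ is a product of classical groups whose types are read off combinatorially from $(\mu;\nu)$ and $U$ is a connected unipotent radical of dimension determined by $(\mu;\nu)$. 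Because this decomposition can be read off directly from the chosen representative, it is manifestly independent of $p \neq 2$; so too are $\pi_0(Z) \cong \pi_0(L)$ and the $\F_q$-point counts of the twisted forms $Z_\xi$. Each summand above is then a characteristic-free rational function of $q$, and the desired independence follows.

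The main obstacle is the explicit description of $Z$, particularly the way the extra vector $v_0$ constrains the stabilizer. This can however be deduced from the known structure of the enhanced nilpotent cone stabilizer $GL(V)^{(v_0,x_0)}$ (see \cite[Section 2]{ah}) by intersecting with $Sp(V)$, using Theorem \ref{thm:exotic}(1). Alternatively, and more in the spirit of the brief proofs of Propositions \ref{prop:poly-B} and \ref{prop:poly-C}, one may adapt the Green function arguments of \cite[Section 4]{shoji:green} to the exotic setting: a uniform scheme-theoretic construction of $\fN$ over $\Spec \Z[1/2]$ combined with flat base change for $\ell$-adic cohomology would yield characteristic-independence with no explicit computation of stabilizers.
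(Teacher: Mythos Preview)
Your outline is correct and shares the paper's basic strategy---Lang's theorem plus the Grothendieck trace formula, reducing to a characteristic-free description of the stabilizer---but you miss the simplification that makes the exotic case \emph{easier} than types $B$ and $C$, not merely parallel to them. The paper invokes \cite[Proposition~4.5]{kato:exotic} to note that the stabilizer $Z = Sp(V)^{(v,x)}$ is \emph{connected}: its reductive quotient is a product of \emph{symplectic} groups, with ranks determined combinatorially and independently of $\F$ by \cite[Section~4]{springer:exotic}. Hence $\pi_0(Z)$ is trivial, your sum over $H^1(\F_q,\pi_0(Z))$ collapses to a single term, and one has $|\Oo_{\mu;\nu}(\F_q)| = |Sp(V)(\F_q)|/|Z(\F_q)|$ directly. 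The more elaborate machinery you set up (twisted forms $Z_\xi$, or alternatively Green-function arguments over $\Spec\Z[1/2]$) is unnecessary here, though something like it is indeed what one needs in types $B$ and $C$, where nontrivial component groups genuinely arise.
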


\begin{proof}
This is easier than Propositions \ref{prop:poly-C} and \ref{prop:poly-B}, as
the stabilizer $Sp(V)^{(v,x)}$ in $Sp(V)$ of a point $(v,x)$ in $\Oo_{\mu;\nu}(\F_q)$ 
is connected (see 
\cite[Proposition 4.5]{kato:exotic}), so $\frac{|Sp(V)(\F_q)|}{|\Oo_{\mu;\nu}(\F_q)|}$
equals $|Sp(V)^{(v,x)}(\F_q)|$. By the
Grothendieck Trace Formula, showing that
$|\Oo_{\mu;\nu}(\F_q)|$ is a rational function in $q$ is enough to imply that it is
an integer polynomial in $q$, so it suffices to show that $|Sp(V)^{(v,x)}(\F_q)|$
is given by an integer polynomial in $q$ independent of $\F$.
This follows from the fact that the reductive
quotient of $Sp(V)^{(v,x)}$ is a product of symplectic groups, 
whose ranks are determined inductively
in \cite[Section 4]{springer:exotic}, independently of $\F$. 
\end{proof}

By the same argument, it follows that if $\F=\C$,
\begin{equation} \label{eqn:betti-ex}
P_{\mu;\nu}(t)
=t^{2(n^2-b(\mu;\nu))}\,\prod_{i=1}^n(1-t^{-2i})\,
\sum_{j\geq 0}\dim H_{2j}^{Sp(V)}(\Oo_{\mu;\nu})\,
t^{-j}.
\end{equation}
The following explicit formula for
$P_{\mu;\nu}(t)$ is proved in \cite[Corollary 3.16]{sun}:
\begin{equation}
P_{\mu;\nu}(t)=
\frac{t^{2(n^2-b(\mu;\nu))}\displaystyle\prod_{i=1}^n (1-t^{-2i})}
{\displaystyle\prod_{a\in J(\mu;\nu)}\left(\prod_{i=1}^{m_a(\mu+\nu)-1}(1-t^{-2i})\right)
\prod_{a\notin J(\mu;\nu)}\left(\prod_{i=1}^{m_a(\mu+\nu)}(1-t^{-2i})\right)},
\end{equation}
where $m_a(\mu+\nu)$ means the multiplicity of the positive integer
$a$ as a part of $\mu+\nu$,
and $J(\mu;\nu)$ is the set of $a$ such that
for some $i\geq 1$, $\mu_i+\nu_i=a$ and $\mu_{i+1}<\mu_i$, and for some
$j\geq 1$ (possibly the same as $i$), $\mu_j+\nu_j=a$ and $\nu_{j-1}>\nu_j$
(here we interpret $\nu_0$ as $\infty$ if $j=1$). We will use this formula only in
Example \ref{exam:minspecial}.


\subsection{Pieces of the exotic nilpotent cone}

Now we can define the pieces of $\fN$ mentioned in the introduction.
For explicit examples, see Examples \ref{exam:minspecial} and 
\ref{exam:maybenotsmoothspecial}.

\begin{defn} \label{defn:pieces}
For any $(\mu;\nu)\in\cQ_n^B$, we define the corresponding \emph{type-$B$ piece} of
$\fN$ to be the following disjoint union of orbits:
\[
\Tt_{\mu;\nu}^B=\bigdisjunion_{\substack{(\rho;\sigma)\in\cQ_n\\
(\rho;\sigma)^B=(\mu;\nu)}}\Oo_{\rho;\sigma}.
\]
For any $(\mu;\nu)\in\cQ_n^C$, we define the corresponding \emph{type-$C$ piece} of
$\fN$ to be
\[
\Tt_{\mu;\nu}^C=\bigdisjunion_{\substack{(\rho;\sigma)\in\cQ_n\\
(\rho;\sigma)^C=(\mu;\nu)}}\Oo_{\rho;\sigma}.
\]
\end{defn}

Combining Theorem \ref{thm:gerst-C}(2) and Theorem \ref{thm:gerst-B}(2) with
Corollaries \ref{cor:hphiC} and \ref{cor:hphiB}, we see that
\begin{equation}
\begin{split}
\Tt_{\mu;\nu}^B&=\;\overline{\Oo_{\mu;\nu}}\,\setminus
\bigcup_{\substack{(\tau;\upsilon)\in\cQ_n^B\\(\tau;\upsilon)<(\mu;\nu)}}
\overline{\Oo_{\tau;\upsilon}}\quad\text{and}
\\
\quad
\Tt_{\mu;\nu}^C&=\;\overline{\Oo_{\mu;\nu}}\,\setminus
\bigcup_{\substack{(\tau;\upsilon)\in\cQ_n^C\\(\tau;\upsilon)<(\mu;\nu)}}
\overline{\Oo_{\tau;\upsilon}},
\end{split}
\end{equation}
so $\Tt_{\mu;\nu}^B$ and $\Tt_{\mu;\nu}^C$ are locally closed subvarieties
of $\fN$.

We can now state the main result of this paper.

\begin{thm} \label{thm:main}
Suppose that $\F$ has odd characteristic $p$.
\begin{enumerate}
\item For any $(\mu;\nu)\in\cQ_n^B$ and any $q=p^s$, 
$|\Tt_{\mu;\nu}^B(\F_q)|=|\cO_{\mu;\nu}^B(\F_q)|$.
Equivalently, we have an equality of polynomials:
\[ \sum_{\substack{(\rho;\sigma)\in\cQ_n\\
(\rho;\sigma)^B=(\mu;\nu)}} P_{\rho;\sigma}(t)=P_{\mu;\nu}^B(t). \]
\item For any $(\mu;\nu)\in\cQ_n^C$ and any $q=p^s$, 
$|\Tt_{\mu;\nu}^C(\F_q)|=|\cO_{\mu;\nu}^C(\F_q)|$.
Equivalently, we have an equality of polynomials:
\[ \sum_{\substack{(\rho;\sigma)\in\cQ_n\\
(\rho;\sigma)^C=(\mu;\nu)}} P_{\rho;\sigma}(t)=P_{\mu;\nu}^C(t). \]
\end{enumerate}
\end{thm}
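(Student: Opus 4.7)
The plan is to exploit the fact that both sides of each claimed equality are polynomials in $q$ with coefficients independent of the characteristic of $\F$, as established by Propositions \ref{prop:poly-C}, \ref{prop:poly-B}, and \ref{prop:poly-ex}. Since two polynomials agreeing on infinitely many integers are equal, it suffices to verify the equality of $\F_q$-point counts for $q$ any power of $2$. This reduction is the strategic crux: although Theorem \ref{thm:main} explicitly assumes $\F$ has odd characteristic, the most direct geometric comparisons between $\fN$, $\cN(\fsp_{2n})$, and $\cN(\fo_{2n+1})$ occur in characteristic $2$. To make the reduction work we need the analogous characteristic-independent polynomials on the classical side even when the orbit structure degenerates, which is supplied by the nilpotent pieces $\cT_{\mu;\nu}^B$ and $\cT_{\mu;\nu}^C$ of Lusztig--Xue \cite[Appendix]{lusztig:unip}: these locally closed subvarieties exist in any characteristic and have $\F_q$-point counts equal to $P_{\mu;\nu}^B(q)$ and $P_{\mu;\nu}^C(q)$, respectively.

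For part (2), the bridge is Kato's $Sp(V)$-equivariant bijective morphism $\fN\to\cN(\fsp(V))$ constructed in \cite{kato:deformations}, which makes sense in every characteristic. The plan for Section 3 is to show that in characteristic $2$ this morphism carries each type-$C$ piece $\Tt_{\mu;\nu}^C$ bijectively onto the corresponding nilpotent piece $\cT_{\mu;\nu}^C$. Since a bijective morphism between varieties of finite type over $\F_q$ induces a bijection on $\F_q$-points, this immediately yields $|\Tt_{\mu;\nu}^C(\F_q)|=|\cT_{\mu;\nu}^C(\F_q)|=P_{\mu;\nu}^C(q)$ for $q$ a power of $2$, and then the polynomial identity extends to all $q$ by characteristic independence.

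For part (1), the plan is to construct in Section 4 an $\F$-scheme isomorphism $\cN(\fo(\tV))\to\fN$ in characteristic $2$ and to trace how each nilpotent piece $\cT_{\mu;\nu}^B$ maps across. The main obstacle is that this isomorphism is \emph{not} $SO(\tV)$-equivariant: $SO(\tV)$ and $Sp(V)$ are linked only through a central isogeny in characteristic $2$, and this isogeny does not induce a Lie algebra isomorphism. Consequently one cannot match orbit closures by a direct transport-of-structure argument; instead, one must track how each orbit in $\cN(\fo(\tV))$ decomposes into $Sp(V)$-orbits under the map and verify that the resulting groupings coincide with the type-$B$ groupings of $\fN$-orbits dictated by Definition \ref{defn:pieces} via the operation $(\rho;\sigma)\mapsto(\rho;\sigma)^B$. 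The characterization in Corollary \ref{cor:hphiB} of $(\rho;\sigma)^B$ as the unique $B$-distinguished bipartition with the same image under $\Phi^B$ (equivalently, the minimal $B$-distinguished dominant bipartition) will be the essential combinatorial lever for identifying these two partitions of orbits. Once this identification is in place, bijectivity on $\F_q$-points for $q$ a power of $2$ yields $|\cT_{\mu;\nu}^B(\F_q)|=|\Tt_{\mu;\nu}^B(\F_q)|$, and the characteristic-independence of both polynomials completes the proof.
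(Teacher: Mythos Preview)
Your overall strategy---reduce to a polynomial identity, then verify it for $q$ a power of $2$ via characteristic-$2$ geometry and the Lusztig--Xue nilpotent pieces---is exactly the paper's, and your outline of part~(2) is essentially correct (one small inaccuracy: Kato's map $\Psi$ is a specifically characteristic-$2$ construction, not one that ``makes sense in every characteristic''; but since you only use it there, this is harmless).

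For part~(1), however, your plan has a real gap. You propose to ``track how each orbit in $\cN(\fo(\tV))$ decomposes into $Sp(V)$-orbits under the map'' $\tPsi^{-1}$. But this decomposition does not exist: the isomorphism $\tPsi$ is only $O(V)$-equivariant, and the paper notes explicitly (just after Theorem~\ref{thm:spalt-B}) that the preimages $\tPsi^{-1}(\cO_{\mu;\nu}^{B,2})$ are \emph{not} in general unions of $Sp(V)$-orbits $\Oo_{\rho;\sigma}$. So there is no orbit-by-orbit matching to verify, and Corollary~\ref{cor:hphiB} alone cannot do the work you assign to it.

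The paper's actual mechanism is more indirect. One introduces, for each $(\mu;\nu)\in\cQ_n^{B,2}$, the coarser unions
\[
\Ee_{\mu;\nu}=\bigcup_{(\rho;\sigma)\preceq(\mu;\nu)}\Oo_{\rho;\sigma}
\quad\text{and}\quad
\cE_{\mu;\nu}=\bigcup_{(\rho;\sigma)\preceq(\mu;\nu)}\cO_{\rho;\sigma}^{B,2},
\]
where $(\rho;\sigma)\preceq(\mu;\nu)$ means $\rho+\sigma=\mu+\nu$ and $(\rho;\sigma)\le(\mu;\nu)$. One then shows (Propositions~\ref{prop:vectorbundle} and~\ref{prop:affinebundle}) that both $\Ee_{\mu;\nu}$ and $\tPsi^{-1}(\cE_{\mu;\nu})$ are fibre bundles over the \emph{same} base $\cO_{\varnothing;\mu+\nu}^{C,2}$ with fibres of the same dimension $2|\mu|$ (a genuine vector bundle versus an affine-space bundle in the Zariski topology). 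This yields $|\Ee_{\mu;\nu}(\F_q)|=|\cE_{\mu;\nu}(\F_q)|$, and then a poset induction on $\preceq$ strips these equalities down to $|\Tt_{\mu;\nu}^{B,2}(\F_q)|=|\cO_{\mu;\nu}^{B,2}(\F_q)|$ for the finer type-$(B,2)$ pieces. Summing over those within a type-$B$ piece and invoking the Lusztig--Xue point count finishes the argument. The fibre-bundle comparison and the induction are the substantive steps your proposal is missing.
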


In view of \eqref{eqn:betti-C}, \eqref{eqn:betti-B}, \eqref{eqn:betti-ex}, and
\cite[Lemma 5.2]{lusztig:notes}, one can also intepret Theorem \ref{thm:main}(1) as saying
that, when $\F=\C$, 
the varieties $\Tt_{\mu;\nu}^B$ and $\cO_{\mu;\nu}^B$ have the
same equivariant Betti numbers (for their respective groups $Sp(V)$ and $SO(\tV)$),
and Theorem \ref{thm:main}(2) as saying
that $\Tt_{\mu;\nu}^C$ and $\cO_{\mu;\nu}^C$ have the
same equivariant Betti numbers (for the group $Sp(V)$).

Our proof of Theorem \ref{thm:main} does not directly relate the varieties
$\Tt_{\mu;\nu}^B$ and $\cO_{\mu;\nu}^B$, or $\Tt_{\mu;\nu}^C$ and $\cO_{\mu;\nu}^C$,
in good characteristic. Instead, we prove each equality of polynomials in the case
when $t$ is a power of $2$ (which obviously suffices). This
requires switching to the case of characteristic $2$ and finding explicit
connections there between the three nilpotent cones $\cN(\fsp(V))$, 
$\cN(\fo(\tV))$, and $\fN$. 
Part (2) of Theorem \ref{thm:main} will be proved (or rather, deduced formally
from results of Spaltenstein, Kato, Lusztig, and Xue) in Section 3, 
and part (1) will be proved in Section 4.

In this approach, we never need to use
the explicit formulas for $P_{\mu;\nu}^C(t)$, $P_{\mu;\nu}^B(t)$, and $P_{\mu;\nu}(t)$ 
referred to above.
It is probably possible to give a combinatorial proof of
Theorem \ref{thm:main} using these formulas, but we prefer our method because it
gives some insight into the relationship between the varieties.

To conclude this section, we deduce Lusztig's result on special
pieces from Theorem \ref{thm:main}. Recall that the poset of \emph{special} 
bipartitions is defined by
\[
\cQ_n^\circ=\cQ_n^B\cap\cQ_n^C=\{(\mu;\nu)\in\cQ_n\,|\,\mu_i\geq\nu_i-1,
\,\nu_i\geq\mu_{i+1},\text{ for all }i\}.
\]
The following result was observed by Spaltenstein in the cases corresponding to
orbits in $\cN(\fsp(V))$ or $\cN(\fo(\tV))$ (i.e.\ where $(\rho;\sigma)\in\cQ_n^C$
or $(\rho;\sigma)\in\cQ_n^B$). 

\begin{prop} \label{prop:special}
For any $(\rho;\sigma)\in\cQ_n$, there is a unique
$(\rho;\sigma)^\circ\in\cQ_n^\circ$ satisfying:
\begin{enumerate}
\item $(\rho;\sigma)\leq(\rho;\sigma)^\circ$, and
\item $(\rho;\sigma)^\circ\leq(\tau;\upsilon)$
for any $(\tau;\upsilon)\in\cQ_n^\circ$ such that 
$(\rho;\sigma)\leq(\tau;\upsilon)$.
\end{enumerate}
\end{prop}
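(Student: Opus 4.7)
The plan is to construct $(\rho;\sigma)^\circ$ by iterating the two operations $(\cdot)^B$ and $(\cdot)^C$ already developed, and to read off both existence and uniqueness from the minimality statements in Corollaries \ref{cor:hphiC} and \ref{cor:hphiB}.

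Concretely, define a sequence of bipartitions $(\rho^{(0)};\sigma^{(0)}),(\rho^{(1)};\sigma^{(1)}),(\rho^{(2)};\sigma^{(2)}),\ldots$ in $\cQ_n$ by setting $(\rho^{(0)};\sigma^{(0)})=(\rho;\sigma)$ and
\[
(\rho^{(k+1)};\sigma^{(k+1)})=
\begin{cases}
(\rho^{(k)};\sigma^{(k)})^B, & k\text{ even},\\
(\rho^{(k)};\sigma^{(k)})^C, & k\text{ odd}.
\end{cases}
\]
By Propositions \ref{prop:hphiC} and \ref{prop:hphiB}, the sequence is non-decreasing in the dominance order on $\cQ_n$. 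Since $\cQ_n$ is finite, the sequence must stabilize at some bipartition $(\mu;\nu)$. By construction, once stabilization occurs, $(\mu;\nu)=(\mu;\nu)^B=(\mu;\nu)^C$, so $(\mu;\nu)$ is both $B$- and $C$-distinguished, i.e.\ $(\mu;\nu)\in\cQ_n^\circ$. Setting $(\rho;\sigma)^\circ=(\mu;\nu)$ gives condition~(1) since $(\rho;\sigma)\le(\mu;\nu)$ by monotonicity of the sequence.

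For condition~(2) and uniqueness, suppose $(\tau;\upsilon)\in\cQ_n^\circ$ satisfies $(\rho;\sigma)\le(\tau;\upsilon)$. Then $(\tau;\upsilon)\in\cQ_n^B$, so by Corollary~\ref{cor:hphiB} applied to the inequality $(\rho^{(0)};\sigma^{(0)})\le(\tau;\upsilon)$, we get $(\rho^{(1)};\sigma^{(1)})\le(\tau;\upsilon)$. Similarly $(\tau;\upsilon)\in\cQ_n^C$, and Corollary~\ref{cor:hphiC} gives $(\rho^{(2)};\sigma^{(2)})\le(\tau;\upsilon)$. Iterating, every $(\rho^{(k)};\sigma^{(k)})\le(\tau;\upsilon)$, hence in the limit $(\rho;\sigma)^\circ\le(\tau;\upsilon)$. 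Uniqueness of an element of $\cQ_n^\circ$ with properties (1) and (2) follows from antisymmetry of the partial order: any two such elements would dominate each other.

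The only real step that requires care is termination of the iteration, but this is immediate because $\cQ_n$ is finite and the sequence is weakly increasing in a partial order (so it cannot strictly increase indefinitely, and must eventually become constant). The minimality step in each direction is precisely what Corollaries~\ref{cor:hphiC} and~\ref{cor:hphiB} were set up to provide, so no further combinatorial analysis of the structure of $\cQ_n^\circ$ is needed. I expect the verification that stabilization implies membership in $\cQ_n^\circ$ to be the main conceptual point, but it follows formally from the characterizations $(\mu;\nu)\in\cQ_n^B\iff(\mu;\nu)^B=(\mu;\nu)$ and $(\mu;\nu)\in\cQ_n^C\iff(\mu;\nu)^C=(\mu;\nu)$ given in Propositions \ref{prop:hphiB} and \ref{prop:hphiC}.
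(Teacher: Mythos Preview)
Your proof is correct and uses the same essential ingredients as the paper's: the operations $(\cdot)^B$ and $(\cdot)^C$ together with the minimality characterizations in Corollaries~\ref{cor:hphiC} and~\ref{cor:hphiB}. The difference is one of efficiency: the paper simply sets $(\rho;\sigma)^\circ = ((\rho;\sigma)^C)^B$ and verifies directly from the explicit descriptions in Definitions~\ref{defn:supC} and~\ref{defn:supB} that this bipartition is already special, so no further iteration is needed. Your argument trades that small combinatorial check for a finiteness/stabilization argument; this is perfectly valid, but it loses the explicit one-line formula for $(\rho;\sigma)^\circ$ (which the paper uses elsewhere) and obscures the fact that two applications, in the order $C$ then $B$, always suffice.
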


\begin{proof}
Define $(\rho;\sigma)^\circ$ to be $((\rho;\sigma)^C)^B$.
From Definitions \ref{defn:supC} and \ref{defn:supB}, we see that 
$(\rho;\sigma)^\circ$ can be constructed from $(\rho;\sigma)$ as
follows. Whenever $\rho_i<\sigma_i-1$, replace $\rho_i,\sigma_i$ with
$\lfloor\frac{\rho_i+\sigma_i}{2}\rfloor,\lceil\frac{\rho_i+\sigma_i}{2}\rceil$; 
whenever $\sigma_i<\rho_{i+1}$, replace $\sigma_i,\rho_{i+1}$ with 
$\lceil\frac{\sigma_i+\rho_{i+1}}{2}\rceil,\lfloor\frac{\sigma_i+\rho_{i+1}}{2}\rfloor$;
as usual, we never have to make overlapping replacements. It is clear that
the bipartition $(\rho;\sigma)^\circ$ constructed in this way is special and satisfies
$(\rho;\sigma)\leq(\rho;\sigma)^\circ$.
Suppose that another special bipartition $(\tau;\upsilon)$ satisfied
$(\rho;\sigma)\leq(\tau;\upsilon)$. Since $(\tau;\upsilon)\in\cQ_n^C$, Corollary
\ref{cor:hphiC} implies that 
$(\rho;\sigma)^C\leq(\tau;\upsilon)$; then since
$(\tau;\upsilon)\in\cQ_n^B$, Corollary \ref{cor:hphiB} implies that 
$(\rho;\sigma)^\circ\leq(\tau;\upsilon)$.
The uniqueness of $(\rho;\sigma)^\circ$ follows formally.
\end{proof}

\begin{rmk}
When $(\rho;\sigma)\in\cQ_n^B$ or $(\rho;\sigma)\in\cQ_n^C$, the irreducible
representation of $\{\pm 1\}\wr S_n$ labelled by $(\rho;\sigma)^\circ$ is in the 
same family, in Lusztig's sense, as that labelled by $(\rho;\sigma)$ 
(see \cite[Theorem 0.2]{lusztig:notes}). This is not the case in general: for example,
$(\varnothing;(3))^\circ=((1);(2))$ is in a different family from $(\varnothing;(3))$.
\end{rmk}

As a consequence of Proposition \ref{prop:special}, $\cN(\fo(\tV))$, $\cN(\fsp(V))$, and $\fN$ each have a partition into
locally closed special pieces.
\begin{defn} \label{defn:special}
For any $(\mu;\nu)\in\cQ_n^\circ$, define the corresponding \emph{special piece}
of $\cN(\fo(\tV))$, $\cN(\fsp(V))$, or $\fN$ respectively to be:
\[
\begin{split}
\cS_{\mu;\nu}^B&=\bigdisjunion_{\substack{(\rho;\sigma)\in\cQ_n^B\\
(\rho;\sigma)^\circ=(\mu;\nu)}}\cO_{\rho;\sigma}^B
\;=\;\overline{\cO_{\mu;\nu}^B}\,\setminus
\bigcup_{\substack{(\tau;\upsilon)\in\cQ_n^\circ\\(\tau;\upsilon)<(\mu;\nu)}}
\overline{\cO_{\tau;\upsilon}^B},\\
\cS_{\mu;\nu}^C&=\bigdisjunion_{\substack{(\rho;\sigma)\in\cQ_n^C\\
(\rho;\sigma)^\circ=(\mu;\nu)}}\cO_{\rho;\sigma}^C
\;=\;\overline{\cO_{\mu;\nu}^C}\,\setminus
\bigcup_{\substack{(\tau;\upsilon)\in\cQ_n^\circ\\(\tau;\upsilon)<(\mu;\nu)}}
\overline{\cO_{\tau;\upsilon}^C},\\
\Ss_{\mu;\nu}&=\bigdisjunion_{\substack{(\rho;\sigma)\in\cQ_n\\
(\rho;\sigma)^\circ=(\mu;\nu)}}\Oo_{\rho;\sigma}
\;=\;\overline{\Oo_{\mu;\nu}}\,\setminus
\bigcup_{\substack{(\tau;\upsilon)\in\cQ_n^\circ\\(\tau;\upsilon)<(\mu;\nu)}}
\overline{\Oo_{\tau;\upsilon}}.
\end{split}
\]
\end{defn}

The equality $|\cS_{\mu;\nu}^B(\F_q)|=|\cS_{\mu;\nu}^C(\F_q)|$ was proved in
\cite[Theorem 0.9, 6.9]{lusztig:notes} by directly computing both sides of the
equivalent polynomial equality.
We now have a new proof which goes via the exotic nilpotent cone.

\begin{thm} \label{thm:special}
Suppose that $\F$ has odd characteristic $p$.
For any $(\mu;\nu)\in\cQ_n^\circ$ and any $q=p^s$,
$|\cS_{\mu;\nu}^B(\F_q)|=|\Ss_{\mu;\nu}(\F_q)|=|\cS_{\mu;\nu}^C(\F_q)|$.
Equivalently, we have an equality of polynomials:
\[
\sum_{\substack{(\rho;\sigma)\in\cQ_n^B\\
(\rho;\sigma)^\circ=(\mu;\nu)}} P_{\rho;\sigma}^B(t)=
\sum_{\substack{(\rho;\sigma)\in\cQ_n\\
(\rho;\sigma)^\circ=(\mu;\nu)}} P_{\rho;\sigma}(t)=
\sum_{\substack{(\rho;\sigma)\in\cQ_n^C\\
(\rho;\sigma)^\circ=(\mu;\nu)}} P_{\rho;\sigma}^C(t).
\]
\end{thm}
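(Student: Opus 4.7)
The plan is to deduce Theorem~\ref{thm:special} formally from Theorem~\ref{thm:main} by partitioning the special piece $\Ss_{\mu;\nu}$ of $\fN$ into type-$B$ pieces $\Tt_{\tau;\upsilon}^B$ (and separately into type-$C$ pieces $\Tt_{\tau;\upsilon}^C$) indexed by the $B$-distinguished (respectively, $C$-distinguished) bipartitions whose special closure is $(\mu;\nu)$. The main preliminary step is a combinatorial compatibility: I need to show that $((\rho;\sigma)^B)^\circ = (\rho;\sigma)^\circ$ for every $(\rho;\sigma) \in \cQ_n$, and similarly with $C$ in place of $B$.

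This compatibility follows immediately from Proposition~\ref{prop:special} together with Corollary~\ref{cor:hphiB}. Since $\cQ_n^\circ \subseteq \cQ_n^B$, Corollary~\ref{cor:hphiB} tells us that for any $(\xi;\eta) \in \cQ_n^\circ$ the condition $(\rho;\sigma) \leq (\xi;\eta)$ is equivalent to $(\rho;\sigma)^B \leq (\xi;\eta)$. Hence the set of special bipartitions dominating $(\rho;\sigma)$ coincides with the set of special bipartitions dominating $(\rho;\sigma)^B$, and their common minimum element is simultaneously $(\rho;\sigma)^\circ$ and $((\rho;\sigma)^B)^\circ$. From this compatibility, the decomposition
\[
\Ss_{\mu;\nu} = \bigdisjunion_{\substack{(\tau;\upsilon) \in \cQ_n^B \\ (\tau;\upsilon)^\circ = (\mu;\nu)}} \Tt_{\tau;\upsilon}^B
\]
is immediate from Definitions~\ref{defn:pieces} and \ref{defn:special}: sorting the orbits $\Oo_{\rho;\sigma}$ making up $\Ss_{\mu;\nu}$ according to the value of $(\rho;\sigma)^B$ reproduces exactly the right-hand union. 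The symmetric argument using Corollary~\ref{cor:hphiC} expresses $\Ss_{\mu;\nu}$ as a disjoint union of type-$C$ pieces over $(\tau;\upsilon) \in \cQ_n^C$ with $(\tau;\upsilon)^\circ = (\mu;\nu)$.

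Counting $\F_q$-points on both sides of the type-$B$ decomposition and applying Theorem~\ref{thm:main}(1) in each summand then yields
\[
|\Ss_{\mu;\nu}(\F_q)| = \sum_{\substack{(\tau;\upsilon) \in \cQ_n^B \\ (\tau;\upsilon)^\circ = (\mu;\nu)}} |\Tt_{\tau;\upsilon}^B(\F_q)| = \sum_{\substack{(\tau;\upsilon) \in \cQ_n^B \\ (\tau;\upsilon)^\circ = (\mu;\nu)}} |\cO_{\tau;\upsilon}^B(\F_q)| = |\cS_{\mu;\nu}^B(\F_q)|,
\]
and Theorem~\ref{thm:main}(2) applied to the type-$C$ decomposition similarly gives $|\Ss_{\mu;\nu}(\F_q)| = |\cS_{\mu;\nu}^C(\F_q)|$. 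I do not anticipate any serious obstacle: the hard content is entirely absorbed into Theorem~\ref{thm:main}, and the remainder is a formal assembly once the one-line compatibility $((\cdot)^B)^\circ = (\cdot)^\circ$ (and its type-$C$ analogue) is verified.
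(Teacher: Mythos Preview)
Your proposal is correct and follows essentially the same approach as the paper: decompose $\Ss_{\mu;\nu}$ into type-$B$ pieces (respectively type-$C$ pieces) indexed by the $B$-distinguished (respectively $C$-distinguished) bipartitions with special closure $(\mu;\nu)$, then apply Theorem~\ref{thm:main} termwise. The paper states this decomposition without further justification, whereas you have spelled out the underlying compatibility $((\rho;\sigma)^B)^\circ=(\rho;\sigma)^\circ$ via Corollary~\ref{cor:hphiB} and Proposition~\ref{prop:special}; this extra detail is correct and fills in exactly what the paper leaves implicit.
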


\begin{proof}
Since $\cQ_n^\circ\subseteq\cQ_n^B$ and $\cQ_n^\circ\subseteq\cQ_n^C$, 
the special pieces in $\fN$ are disjoint
unions of type-$B$ pieces, and also disjoint unions of type-$C$
pieces (in fact, the partition of $\fN$ into special pieces is the finest partition
for which this holds). Explicitly, for any $(\mu;\nu)\in\cQ_n^\circ$,
\begin{equation}
\Ss_{\mu;\nu}=\bigdisjunion_{\substack{(\rho;\sigma)\in\cQ_n^B\\(\rho;\sigma)^\circ=(\mu;\nu)}}
\Tt_{\rho;\sigma}^B
=\bigdisjunion_{\substack{(\rho;\sigma)\in\cQ_n^C\\(\rho;\sigma)^\circ=(\mu;\nu)}}
\Tt_{\rho;\sigma}^C.
\end{equation}
So the result is an immediate consequence of Theorem \ref{thm:main}.
\end{proof}

\begin{exam} \label{exam:minspecial}
Take $n=3$. We have $((1);(1^2))\in\cQ_3^\circ$, and 
$(\varnothing;(21))^\circ=((1^3);\varnothing)^\circ=((1);(1^2))$; the only other
bipartition less than $((1);(1^2))$ is $(\varnothing;(1^3))$ which is itself special.
So the special piece $\Ss_{(1);(1^2)}$ in the exotic nilpotent cone is the union of
the orbits $\Oo_{(1);(1^2)}$, $\Oo_{\varnothing;(21)}$, and $\Oo_{(1^3);\varnothing}$.
The numbers of 
$\F_q$-points of these orbits are:
\[
\begin{split}
|\Oo_{(1);(1^2)}(\F_q)|&=(q^4-1)(q^6-1),\\
|\Oo_{\varnothing;(21)}(\F_q)|&=(q^2+1)(q^6-1),\\
|\Oo_{(1^3);\varnothing}(\F_q)|&=q^6-1.
\end{split}
\]
Now $(\varnothing;(21))$ is in $\cQ_3^B$ but not $\cQ_3^C$, and
$((1^3);\varnothing)$ is in $\cQ_3^C$ but not $\cQ_3^B$. So the special piece breaks
into type-$B$ and type-$C$ pieces as follows:
\[
\begin{split}
\Ss_{(1);(1^2)}&=\Tt_{(1);(1^2)}^B\disjunion\Tt_{\varnothing;(21)}^B,\text{ where }\\
&\Tt_{(1);(1^2)}^B=\Oo_{(1);(1^2)}\disjunion\Oo_{(1^3);\varnothing},\
\Tt_{\varnothing;(21)}^B=\Oo_{\varnothing;(21)},\\
\Ss_{(1);(1^2)}&=\Tt_{(1);(1^2)}^C\disjunion\Tt_{(1^3);\varnothing}^C,\text{ where }\\
&\Tt_{(1);(1^2)}^C=\Oo_{(1);(1^2)}\disjunion\Oo_{\varnothing;(21)},\
\Tt_{(1^3);\varnothing}^C=\Oo_{(1^3);\varnothing}.
\end{split}
\]
We have
\[
\begin{split}
|\Tt_{(1);(1^2)}^B(\F_q)|&=(q^4-1)(q^6-1)+q^6-1=q^4(q^6-1),\\
|\Tt_{\varnothing;(21)}^B(\F_q)|&=(q^2+1)(q^6-1).
\end{split}
\]
In accordance with Theorem \ref{thm:main}(1), these are also the numbers of $\F_q$-points
in the orbits $\cO_{(1);(1^2)}^B$ 
of Jordan type $(31^4)$ and $\cO_{\varnothing;(21)}^B$
of Jordan type $(2^21^3)$ in $\cN(\fo_7)$. Similarly,

\[
\begin{split}
|\Tt_{(1);(1^2)}^C(\F_q)|&=(q^4-1)(q^6-1)+(q^2+1)(q^6-1)=(q^4+q^2)(q^6-1),\\
|\Tt_{(1^3);\varnothing}^C(\F_q)|&=q^6-1,
\end{split}
\]
which in accordance with Theorem \ref{thm:main}(2) are also the numbers of $\F_q$-points
in the orbits $\cO_{(1);(1^2)}^C$ of Jordan type $(2^21^2)$ and $\cO_{(1^3);\varnothing}^C$
of Jordan type $(21^4)$ in $\cN(\fsp_6)$. The number of $\F_q$-points in the special piece
labelled by $((1);(1^2))$, in any one of the three cones, is
$(q^4+q^2+1)(q^6-1)$.
\end{exam}
\section{Comparing exotic and symplectic nilpotent cones in characteristic $2$}
In this section, $\F$ denotes an algebraically closed field of characteristic $2$,
and $V$ is a $2n$-dimensional vector space over $\F$. For concreteness, we choose
a basis $e_1,e_2,\cdots,e_{2n}$ of $V$ and use it to
identify $\End(V)$ with the ring of $2n\times 2n$ matrices over $\F$.
We fix a nondegenerate quadratic form $Q$ on $V$:
\[ Q(a_1e_1+a_2e_2+\cdots+a_{2n-1}e_{2n-1}+a_{2n}e_{2n})
=a_1a_{2n}+a_2a_{2n-1}+\cdots+a_na_{n+1}. \]
The associated nondegenerate
symmetric form is defined by 
\[ \langle v,w\rangle=Q(v+w)-Q(v)-Q(w). \]
Explicitly,
\begin{equation} 
\langle a_1e_1+\cdots+a_{2n}e_{2n}, b_1e_1+\cdots+b_{2n}e_{2n}\rangle=
a_1b_{2n}+a_2b_{2n-1}+\cdots+a_{2n}b_1.
\end{equation}
Since $\F$ has characteristic $2$, $\langle v,v\rangle=0$ for all $v\in V$, so 
this is actually an alternating form.

Let $O(V)$ and $Sp(V)$ be the stabilizers in $GL(V)$
of the quadratic form $Q$ and the alternating form $\langle\cdot,\cdot\rangle$
respectively. Then $O(V)$ is clearly a closed subgroup of $Sp(V)$, and both are defined
over $\F_{q}$ for any $q=2^s$.
The identity component 
of $O(V)$ is a simple group of type $D_n$ (for $n\geq 4$). The symplectic
group $Sp(V)$ is connected and simple of type $C_n$ (for $n\geq 2$). A reference for these and subsequent facts is
\cite[Exercises 7.4.7]{springer}.

The Lie algebra of $Sp(V)$ is, as in all characteristics, 
\[ \fsp(V)=\{x\in\End(V)\,|\,\langle xv,w\rangle=\langle xw,v\rangle,\text{ for all }
v,w\in V\}. \]
As a representation of $Sp(V)$, it is isomorphic to $S^2(V^*)$, the space of
symmetric bilinear forms on $V$, via the isomorphism which sends $x\in\fsp(V)$ 
to the form
$(v,w)\mapsto\langle xv,w\rangle$. A new feature of characteristic $2$ is that
$\fsp(V)$ is not an irreducible representation of $Sp(V)$,
because it contains as a subrepresentation (and Lie ideal) 
the Lie algebra of $O(V)$, namely
\[ \fo(V)=\{x\in\End(V)\,|\,\langle xv,v\rangle=0,\text{ for all }
v\in V\}, \]
which corresponds to the space of alternating bilinear forms $\Lambda^2(V^*)$.
In matrix terms, $\fsp(V)$ consists of all matrices which are symmetric about the 
skew diagonal (in the sense that the $(i,j)$-entry equals the $(2n+1-j,2n+1-i)$-entry),
and $\fo(V)$ consists of all matrices which are symmetric about the skew diagonal
and have zero entries on the skew diagonal (that is, the $(i,2n+1-i)$-entry is zero
for all $i$). 

As a representation of $Sp(V)$, the quotient
$\fsp(V)/\fo(V)$ is isomorphic to the space $V^\sharp$ of 
functions $f:V\to\F$ which are
Frobenius-semilinear in the sense that 
\[ f(v+w)=f(v)+f(w),\ f(av)=a^2f(v),\text{ for all }a\in\F,\, v,w\in V. \]
The isomorphism sends the coset of $x\in\fsp(V)$ to the function 
$v\mapsto\langle xv,v\rangle$. 
The representation $V^\sharp$ is in turn
isomorphic to the Frobenius twist $V^{(1)}$ (which is
the same as $V$ except that there is a new scalar multiplication $a.v=a^{1/2}v$), via
the map which sends $v\in V^{(1)}$ to the function $\langle v,\cdot\rangle^2$.
Thus we have a short exact sequence
of representations of $Sp(V)$:
\begin{equation} \label{ses1}
\xymatrix{
0 \ar[r] & \fo(V) \ar[r] & \fsp(V) \ar[r]^\pi & V^{(1)} \ar[r] & 0
}
\end{equation}
Indeed, $\fsp(V)$ is the Weyl module for $Sp(V)$
of highest weight $2\varepsilon_1$ (the highest
root), and $V^{(1)}$ is the irreducible module of highest weight $2\varepsilon_1$.
In matrix terms, if $x$ is the matrix $(a_{ij})$, then 
$\pi(x)=\displaystyle\sum_{i=1}^{2n} a_{i,2n+1-i}^{1/2}\,e_i$.

Now the crucial observation of Kato in \cite[Section 4]{kato:deformations} is that
there is a `non-linear splitting' of the short exact sequence \eqref{ses1}.
Namely, we have an $Sp(V)$-equivariant map $s:V\to\fsp(V)$ defined by
\[
s(v)(w)=\langle v,w\rangle v,\text{ for all }
v,w\in V,
\]
such that $\pi(s(v))=v$. In matrix terms, the $(i,j)$-entry of $s(\sum a_ie_i)$ 
is $a_ia_{2n+1-j}$. Note that $s$ does not preserve addition.

As a result, we have an $Sp(V)$-equivariant bijection (denoted $ml$ in 
\cite{kato:deformations})
\begin{equation}
\Psi:V\oplus\fo(V)\to\fsp(V):(v,x)\mapsto s(v)+x.
\end{equation}
Note that $\Psi$ is a morphism of varieties but its inverse 
$\Psi^{-1}:y\mapsto(\pi(y),y-s(\pi(y)))$ is not ($\pi$ is a morphism if its codomain
is $V^{(1)}$, but not if we reinterpret the codomain as $V$).

Recall that the ordinary nilpotent cone of $Sp(V)$ is
\[
\cN(\fsp(V))=\{y\in\fsp(V)\,|\,y\text{ is nilpotent}\}.
\]
Kato's exotic nilpotent cone of $Sp(V)$, in the present context, is
\[
\fN=V\times\cN(\fo(V))=\{(v,x)\in V\oplus\fo(V)\,|\,x\text{ is nilpotent}\}.
\]
Actually Kato defines $\fN$ in \cite{kato:deformations} as the Hilbert nullcone
of the $Sp(V)$-representation $V\oplus\fo(V)$, then proves that $\fN=V\times\cN(\fo(V))$.
\begin{prop}[Kato]
$\Psi$ restricts to an $Sp(V)$-equivariant bijective morphism
$\Psi:\fN\to\cN(\fsp(V))$.
\end{prop}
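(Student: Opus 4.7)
The plan is to exploit the fact, established immediately before, that $\Psi$ is an $Sp(V)$-equivariant bijective morphism from $V \oplus \fo(V)$ onto $\fsp(V)$. Restriction to $\fN = V \times \cN(\fo(V))$ inherits equivariance and the morphism property automatically, and $\fN$ is $Sp(V)$-stable since $\fo(V)$ is an $Sp(V)$-subrepresentation of $\fsp(V)$. So the only real content to establish is that $\Psi(\fN) = \cN(\fsp(V))$: setting $y := \Psi(v, x) = s(v) + x$ with $v \in V$ and $x \in \fo(V)$, the claim reduces to showing that $y$ is nilpotent if and only if $x$ is.

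I would prove this equivalence via an explicit formula for the iterates of $y$. Since $x \in \fo(V) \subseteq \fsp(V)$ is self-adjoint with respect to $\langle \cdot, \cdot \rangle$, a short induction on $k$ should yield
\[
y^k(w) \;=\; \sum_{\substack{i+j = k-1 \\ i,j \geq 0}} \langle x^i v, w \rangle\, x^j v \;+\; x^k(w), \qquad \text{for all } w \in V.
\]
The inductive step requires the identity $\langle v, x^j v \rangle = 0$ for every $j \geq 0$: using self-adjointness one rewrites this as $\langle x^a v, x^b v \rangle$ with $|a - b| \leq 1$, and the result vanishes either by the alternating property (when $a = b$) or by the defining property $\langle w, xw \rangle = 0$ of $\fo(V)$ applied to $w = x^a v$ (when $b = a + 1$).

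With the formula in hand, both directions follow quickly. If $x^m = 0$, then for $k \geq 2m$ every index pair $(i,j)$ in the sum satisfies $i \geq m$ or $j \geq m$, so each summand and $x^k(w)$ all vanish; hence $y^k = 0$. Conversely, if $y^k = 0$, evaluating at $w = v$ kills every cross term (because $\langle x^i v, v \rangle = 0$), leaving $x^k v = 0$. The formula then shows $x^k(w) \in \Span\{v, xv, \ldots, x^{k-1} v\}$ for every $w$, and a further application of $x^k$ annihilates each element in that span, giving $x^{2k} = 0$.

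The main obstacle is simply the bookkeeping in the inductive step: one must separately compute $\langle v, y^k(w) \rangle$ and $x(y^k(w))$, use self-adjointness to rewrite $\langle v, x^k w \rangle = \langle x^k v, w \rangle$, and reindex $j \mapsto j+1$ in the surviving sum so that it combines with the new rank-one term $\langle x^k v, w \rangle\, v$ to reproduce the formula at level $k+1$. Beyond this calculation, no input from the structure theory of $\cN(\fsp(V))$ is required.
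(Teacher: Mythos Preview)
Your proof is correct. Both your argument and the paper's hinge on the same key identity $\langle v, x^j v\rangle = 0$, established by the same self-adjointness trick, so in that sense the approaches share their essential ingredient. The organization differs, however: the paper observes directly that $s(v)$ annihilates the subspace $\F[x]v$ (since $s(v)(x^i v)=\langle v,x^i v\rangle v=0$) and has image in $\F v\subseteq\F[x]v$, so $y=\Psi(v,x)$ preserves $\F[x]v$ and induces the same endomorphisms as $x$ on both $\F[x]v$ and $V/\F[x]v$; nilpotence of $y$ and $x$ are then equivalent by the block-triangular principle. Your explicit formula for $y^k(w)$ is effectively a computational unpacking of this invariant-subspace picture (the sum lies in $\F[x]v$, and restricting to $w\in\F[x]v$ collapses it), and it buys you the exact nilpotency index bounds $y^{2m}=0$ from $x^m=0$ and $x^{2k}=0$ from $y^k=0$, which the paper's argument does not make explicit. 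The paper's version is shorter and more conceptual; yours is more hands-on but yields finer quantitative information.
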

\begin{proof}
This amounts to saying that for $v\in V$ and $x\in\fo(V)$,
$\Psi(v,x)$ is nilpotent if and only if $x$ is nilpotent. For any $i$, we have
\[ s(v)(x^i v)=\langle v,x^i v\rangle v = \langle x^{\lceil i/2\rceil} v,
x^{\lfloor i/2\rfloor} v\rangle v =0, \]
because $\langle w,w\rangle=\langle xw,w\rangle=0$ for all $w\in V$.
It follows that $\Psi(v,x)$ preserves the subspace $\F[x]v$ spanned by all $x^i v$, and 
induces the same endomorphisms of $\F[x]v$ and $V/\F[x]v$ as $x$ does. The result follows.
\end{proof}

The classification of $Sp(V)$-orbits in $\cN(\fsp(V))$ was found by Hesselink
in \cite{hesselink}, and the
closure ordering on these orbits by Groszer. 
For us it is more convenient to use the
reformulation of their results in terms of bipartitions, due to Spaltenstein. 

Recall the definition of the map $\Phi^C:\cQ_n\to\cP_{2n}^C$ from the previous section.
We need to supplement this by defining the \emph{Hesselink index},
a function $\chi_{\mu;\nu}$ on the set of numbers which occur as parts
of $\Phi^C(\mu;\nu)$, as follows. For every part $2s$ which was left unaltered
from the composition $(2\mu_1,2\nu_1,2\mu_2,2\nu_2,\cdots)$, 
we define $\chi_{\mu;\nu}(2s)=s$; for every
part $s+t$ arising from $2s<2t$, we define $\chi_{\mu;\nu}(s+t)=s$. It is easy
to see that these definitions never conflict.
Observe that $\Phi^C(\varnothing;\nu)$ is the duplicated partition 
$\nu\cup\nu=(\nu_1,\nu_1,\nu_2,\nu_2,\cdots)$,
and that $\chi_{\varnothing;\nu}(\nu_i)=0$ for all $i$.

\begin{thm}[Hesselink, Spaltenstein] \label{thm:spalt-C}
\cite[3.3, 3.6, 4.2]{spaltenstein}
\begin{enumerate}
\item The $Sp(V)$-orbits in $\cN(\fsp(V))$ are in bijection with $\cQ_n$.
For $(\mu;\nu)\in\cQ_n$, the corresponding orbit $\cO_{\mu;\nu}^{C,2}$ consists of those
$y\in\cN(\fsp(V))$ whose Jordan type is $\Phi^C(\mu;\nu)$ and which satisfy
\[
\min\{j\geq 0\,|\,\langle y^{2j+1}w,w\rangle=0,\textup{ for all }w\in\ker(y^m)\}=
\chi_{\mu;\nu}(m),
\]
for all parts $m$ of $\Phi^C(\mu;\nu)$.
\item For $(\rho;\sigma),(\mu;\nu)\in\cQ_n$,
\[
\cO_{\rho;\sigma}^{C,2}\subseteq\overline{\cO_{\mu;\nu}^{C,2}}\Longleftrightarrow
(\rho;\sigma)\leq(\mu;\nu).
\]
\end{enumerate}
\end{thm}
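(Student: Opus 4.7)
The plan is to combine the $Sp(V)$-equivariant bijective morphism $\Psi : \fN \to \cN(\fsp(V))$ of the preceding proposition with the orbit classification of the exotic nilpotent cone in Theorem \ref{thm:exotic}, and to translate the latter into Hesselink invariants by direct computation on representatives. This makes the role of $\Phi^C$ and $\chi_{\mu;\nu}$ transparent and is consistent with the use of $\Psi$ later in this section. Alternatively, one may simply cite Spaltenstein's original treatment in \cite[3.3, 3.6, 4.2]{spaltenstein}.

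For part (1), $\Psi$ restricts to an $Sp(V)$-equivariant bijection on orbits, so setting $\cO^{C,2}_{\mu;\nu} := \Psi(\Oo_{\mu;\nu})$ gives a bijection between $\cQ_n$ and the $Sp(V)$-orbits in $\cN(\fsp(V))$. To identify the Jordan type as $\Phi^C(\mu;\nu)$ and the Hesselink index as $\chi_{\mu;\nu}$, I would take an explicit representative $(v,x) \in \Oo_{\mu;\nu}$ as in \cite[Theorem 6.1]{ah}, in which $x$ has Jordan type $(\mu+\nu)\cup(\mu+\nu)$ and $v$ lies in $\sum_i x^{\nu_i}(\ker x^{\mu_i+\nu_i})$, and compute $\Psi(v,x) = s(v) + x$ directly. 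The argument of the preceding proposition shows that $\Psi(v,x)$ preserves $\F[x]v$ and induces $x$ on both $\F[x]v$ and $V/\F[x]v$; decomposing $V$ into cyclic $\F[\Psi(v,x)]$-summands built from pairs of Jordan blocks of $x$, one finds that their sizes merge in precisely the pattern defining $\Phi^C$. The Hesselink index is then read off by expanding $\langle \Psi(v,x)^{2j+1}w, w\rangle$ using the definition of $s$ and the identity $\langle xw, w'\rangle = \langle w, xw'\rangle$ valid for $x\in\fo(V)$, and finding the minimal $j$ for which the resulting form vanishes on $\ker(\Psi(v,x)^m)$.

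For part (2), the continuity and bijectivity of $\Psi$ together with Theorem \ref{thm:exotic}(2) yield one direction: $(\rho;\sigma) \leq (\mu;\nu)$ implies $\cO^{C,2}_{\rho;\sigma} \subseteq \overline{\cO^{C,2}_{\mu;\nu}}$. For the converse, I would use semicontinuity: any orbit in $\overline{\cO^{C,2}_{\mu;\nu}}$ has Jordan type dominated by $\Phi^C(\mu;\nu)$ (standard semicontinuity of Jordan type) and Hesselink index bounded accordingly, and Proposition \ref{prop:phiC} combined with the fibre structure of $\Phi^C$ then forces the corresponding bipartition to be $\leq (\mu;\nu)$ in $\cQ_n$. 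The main obstacle is the algebraic and combinatorial bookkeeping in part (1): the block-merging pattern produced by $s(v)$ and the evaluation of the forms $\langle \Psi(v,x)^{2j+1}w, w\rangle$ both depend delicately on the bipartition structure, and matching them against $\Phi^C$ and $\chi_{\mu;\nu}$ requires a careful case analysis on how consecutive parts of the interleaved quasi-partition $(2\mu_1, 2\nu_1, 2\mu_2, 2\nu_2, \ldots)$ are averaged.
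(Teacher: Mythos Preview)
Your approach is circular. Theorem~\ref{thm:exotic} is stated and proved in Section~2 under the standing assumption that the characteristic is not $2$, whereas the map $\Psi$ exists only in characteristic~$2$. The characteristic-$2$ analogue of the exotic orbit classification is Theorem~\ref{thm:kato}, and in this paper that result is \emph{deduced from} Theorem~\ref{thm:spalt-C} via $\Psi$: the very first sentence of its proof reads ``Since $\Psi$ is bijective and $Sp(V)$-equivariant, the $Sp(V)$-orbits in $\fN$ are the preimages under $\Psi$ of the orbits $\cO_{\mu;\nu}^{C,2}$ in $\cN(\fsp(V))$. In particular, the number of $Sp(V)$-orbits in $\fN$ is $|\cQ_n|$.'' So you cannot invoke the exotic classification to establish Theorem~\ref{thm:spalt-C} without first giving an independent proof of Theorem~\ref{thm:kato} in characteristic~$2$ (for instance by adapting the arguments of \cite[Section~6]{ah} directly, which the paper does not do).

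The paper in fact gives no proof of Theorem~\ref{thm:spalt-C}: it is quoted as a result of Hesselink and Spaltenstein, with the citation \cite[3.3, 3.6, 4.2]{spaltenstein} serving as the proof. The explicit computation you outline --- taking a representative $(v,x)\in\Oo_{\mu;\nu}$ and determining the Jordan type and Hesselink index of $\Psi(v,x)$ --- is precisely the ``somewhat painful calculation'' alluded to in the proof of the later Theorem~\ref{thm:matchup}, which matches up the two classifications \emph{after} both are already known. That calculation presupposes Theorem~\ref{thm:spalt-C} (to know what the $\cO_{\mu;\nu}^{C,2}$ are); it does not supply it.
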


In particular, the $Sp(V)$-orbits in $\cN(\fo(V))$ arise as the special case where
$\mu=\varnothing$: $\cO_{\varnothing;\nu}^{C,2}$ consists of those 
$y\in\cN(\fo(V))$ whose Jordan type is $\nu\cup\nu$. (Each such orbit 
splits into finitely
many $O(V)$-orbits, which are also described in \cite{spaltenstein}.)

Spaltenstein observed in \cite[3.8]{spaltenstein}
that his bijection between $\cQ_n$ and $Sp(V)$-orbits in $\cN(\fsp(V))$ 
would have to be the Springer
correspondence, if the latter could be defined successfully in characteristic $2$ (in
particular, if it could be shown to behave well with respect to induction).
His expectation has been verified by Xue in \cite[Theorem 8.2]{xue:combinatorics}.

Following Kato, we deduce from Theorem \ref{thm:spalt-C} 
that Theorem \ref{thm:exotic} persists in characteristic $2$.
\begin{thm}[Kato] \label{thm:kato}
\cite[Theorem 4.1, Corollary 4.3]{kato:deformations}
\begin{enumerate}
\item The $Sp(V)$-orbits in $\fN$ are in bijection with $\cQ_n$.
For $(\mu;\nu)\in\cQ_n$, the corresponding orbit $\Oo_{\mu;\nu}$ 
is the intersection of $\fN$ with the orbit
$\cO_{\mu\cup\mu;\nu\cup\nu}$
in the enhanced nilpotent cone $V\times\cN(\fgl(V))$. 
\item For $(\rho;\sigma),(\mu;\nu)\in\cQ_n$,
\[
\Oo_{\rho;\sigma}\subseteq\overline{\Oo_{\mu;\nu}}\Longleftrightarrow
(\rho;\sigma)\leq(\mu;\nu).
\]
\end{enumerate}
\end{thm}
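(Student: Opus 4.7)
The plan is to use the $Sp(V)$-equivariant bijective morphism $\Psi\colon\fN\to\cN(\fsp(V))$ constructed in the preceding proposition to transport Spaltenstein's classification (Theorem \ref{thm:spalt-C}) across to $\fN$. Since $\Psi$ is $Sp(V)$-equivariant and bijective on points, it automatically induces a bijection on the sets of $Sp(V)$-orbits. Combined with Theorem \ref{thm:spalt-C}(1), this gives a parametrization of the $Sp(V)$-orbits in $\fN$ by $\cQ_n$: I would simply define $\Oo_{\mu;\nu}:=\Psi^{-1}(\cO^{C,2}_{\mu;\nu})$.

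The first substantive step is to verify the intersection description $\Oo_{\mu;\nu}=\cO_{\mu\cup\mu;\nu\cup\nu}\cap\fN$. For this I would exploit the explicit formula $\Psi(v,x)=s(v)+x$ and the key computation already done in the preceding proposition: $s(v)+x$ preserves the $x$-stable subspace $\F[x]v\subseteq V$ and induces the same linear map as $x$ on both $\F[x]v$ and $V/\F[x]v$. This gives a direct translation between (i) the Jordan type of $x$, which must belong to the ``duplicated'' shape $(\mu+\nu)\cup(\mu+\nu)$ as required by the definition of the enhanced-cone orbit $\cO_{\mu\cup\mu;\nu\cup\nu}$ recalled before Theorem \ref{thm:exotic}, and (ii) the pair consisting of the Jordan type $\Phi^C(\mu;\nu)$ of $s(v)+x$ together with the Hesselink index $\chi_{\mu;\nu}$, as prescribed by Theorem \ref{thm:spalt-C}(1). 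The combinatorics of $\Phi^C$ (successive pairs $2s,2t$ with $2s<2t$ are replaced by $s+t,s+t$) exactly matches, under this translation, the contribution of the vector $v$ lying in the open $Sp(V)^x$-orbit on $\sum_i x^{\nu_i}\ker(x^{\mu_i+\nu_i})$, which is the second condition characterizing $\cO_{\mu\cup\mu;\nu\cup\nu}\cap\fN$.

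For part (2), the forward direction is immediate from continuity: if $\Oo_{\rho;\sigma}\subseteq\overline{\Oo_{\mu;\nu}}$, then $\cO^{C,2}_{\rho;\sigma}\subseteq\Psi(\overline{\Oo_{\mu;\nu}})\subseteq\overline{\cO^{C,2}_{\mu;\nu}}$, and Theorem \ref{thm:spalt-C}(2) gives $(\rho;\sigma)\leq(\mu;\nu)$. For the reverse direction I would invoke the fact that a bijective morphism of finite-type varieties over an algebraically closed field is a homeomorphism for the Zariski topology; concretely, the only obstruction to $\Psi^{-1}$ being a morphism is the square-root occurring in $\pi$, which is a purely inseparable (hence topologically invisible) defect. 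Consequently $\overline{\Oo_{\mu;\nu}}=\Psi^{-1}(\overline{\cO^{C,2}_{\mu;\nu}})$, and the reverse implication follows directly from the corresponding statement for $\cO^{C,2}_{\mu;\nu}$.

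The main obstacle is the middle step, verifying the intersection description $\Oo_{\mu;\nu}=\cO_{\mu\cup\mu;\nu\cup\nu}\cap\fN$, because the nonlinear perturbation $s(v)$ genuinely changes Jordan types in a way that must be tracked carefully. The route I have outlined reduces this tracking to the decomposition $V\supseteq\F[x]v$, on which $s(v)+x$ and $x$ agree, but matching up the Hesselink condition with the open-orbit condition of the enhanced nilpotent cone still requires the explicit orbit representatives from \cite[Section 6]{ah} (whose construction is characteristic-free). Everything else in the theorem is a formal consequence of Kato's bijection $\Psi$ and Theorem \ref{thm:spalt-C}.
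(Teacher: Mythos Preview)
Your approach is valid but takes a substantially harder route than the paper, and in fact proves more than the theorem claims.

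The paper's proof of part (1) is a pure counting argument: $\Psi$ shows there are $|\cQ_n|$ orbits in $\fN$; each $Sp(V)$-orbit lies in a single enhanced $GL(V)$-orbit; and the characteristic-free explicit representatives from \cite[Theorem 6.1]{ah} show that each of the $|\cQ_n|$ enhanced orbits $\cO_{\mu\cup\mu;\nu\cup\nu}$ meets $\fN$. Pigeonhole then forces each such intersection to be a single $Sp(V)$-orbit, and this is \emph{defined} to be $\Oo_{\mu;\nu}$. The paper never checks, in this proof, that $\Oo_{\mu;\nu}=\Psi^{-1}(\cO^{C,2}_{\mu;\nu})$: that matching of labels is the content of the separate Theorem \ref{thm:matchup}, whose proof the paper describes as either ``a somewhat painful calculation'' with explicit representatives or a cleaner argument via the two Springer correspondences. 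For part (2) the paper does not use $\Psi$ at all; having identified $\Oo_{\mu;\nu}$ with an enhanced-cone intersection, it simply invokes the characteristic-free closure-order argument of \cite[Theorem 6.3]{ah}.

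Your route instead defines $\Oo_{\mu;\nu}$ via $\Psi$ and then attempts the ``painful calculation'' directly, i.e.\ you are folding Theorem \ref{thm:matchup} into the proof of Theorem \ref{thm:kato}. The payoff is that part (2) then follows from $\Psi$ being a homeomorphism (which is correct: its inverse is Frobenius composed with a morphism). The cost is the step you yourself flag as the main obstacle: the observation that $s(v)+x$ agrees with $x$ on $\F[x]v$ and on $V/\F[x]v$ does not by itself pin down the Jordan type of $s(v)+x$, since the extension data is exactly what the Hesselink index records; matching this to the open-orbit condition on $v$ genuinely requires working through explicit representatives. So your sketch is correct in outline, but the paper's counting argument sidesteps precisely the hardest computation you are proposing to do.
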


\begin{proof}
Since $\Psi$ is bijective and $Sp(V)$-equivariant, the $Sp(V)$-orbits in $\fN$
are the preimages under $\Psi$ of the orbits $\cO_{\mu;\nu}^{C,2}$ in $\cN(\fsp(V))$.
In particular, the number of $Sp(V)$-orbits in $\fN$ is $|\cQ_n|$.
It is clear that each $Sp(V)$-orbit in $\fN$ is contained in a single
$GL(V)$-orbit in the enhanced nilpotent cone $V\times\cN(\fgl(V))$, and the explicit
representative used in the proof of \cite[Theorem 6.1]{ah} shows that for each
$(\mu;\nu)\in\cQ_n$, the
enhanced nilpotent orbit $\cO_{\mu\cup\mu;\nu\cup\nu}$ does intersect $\fN$.
Part (1) follows. Knowing this, the proof of part (2) is the same as in the
characteristic $\neq 2$ case (\cite[Theorem 6.3]{ah}).
\end{proof}

It follows that the definition of type-$B$, type-$C$, and special pieces of $\fN$ given
in Definitions \ref{defn:pieces} and \ref{defn:special} can be applied also to
the characteristic $2$ case.

\begin{rmk}
Kato's terminology is slightly different: his statement
in \cite[Corollary 4.3]{kato:deformations} that different
orbits in $\fN$ have different `$k$-invariants' is equivalent to saying that
they are contained in different $GL(V)$-orbits 
in the enhanced nilpotent cone $V\times\cN(\fgl(V))$; the 
`marked partitions' of \cite[Definitions 3.11, 3.12]{kato:deformations}
are another way of parametrizing these enhanced nilpotent orbits, equivalent
to the bipartitions of $2n$ used above.
\end{rmk}

Kato shows in \cite[Theorem 10.7]{kato:deformations} that the orbit
$\Oo_{\mu;\nu}$ is the one which corresponds to $(\mu;\nu)$
(or rather, the irreducible representation of $W(C_n)$ with that label) under
his exotic Springer correspondence.

Comparing Theorems \ref{thm:spalt-C} and \ref{thm:kato}, it is natural to guess
that the orbit $\cO_{\mu;\nu}^{C,2}$ (specified by Jordan form and Hesselink index) and
the orbit $\Oo_{\mu;\nu}$ (specified by the enhanced nilpotent orbit in which it lies)
correspond under the bijection $\Psi$, and this is indeed the case.

\begin{thm} \label{thm:matchup}
For any $(\mu;\nu)\in\cQ_n$, $\cO_{\mu;\nu}^{C,2}=\Psi(\Oo_{\mu;\nu})$.
\end{thm}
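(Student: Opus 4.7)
The map $\Psi$, being an $Sp(V)$-equivariant bijective morphism, induces a bijection $f\colon\cQ_n\to\cQ_n$ via $\Psi(\Oo_{\mu;\nu})=\cO_{f(\mu;\nu)}^{C,2}$; the task is to prove that $f$ is the identity. By Theorem \ref{thm:spalt-C}, it suffices to exhibit some $(v,x)\in\Oo_{\mu;\nu}$ for which $y=\Psi(v,x)=s(v)+x$ has Jordan type $\Phi^C(\mu;\nu)$ and Hesselink invariants given by $\chi_{\mu;\nu}$.

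A key simplification, which drives the Hesselink calculation, is available because $y\in\fsp(V)$: the form $(u,w)\mapsto\langle yu,w\rangle$ is symmetric, so $\langle y^{2j+1}w,w\rangle=\langle y\cdot y^j w,\,y^j w\rangle$. On the other hand, for any $u\in V$, using $x\in\fo(V)$ and the definition of $s(v)$,
\[
\langle yu,u\rangle = \langle s(v)u,u\rangle+\langle xu,u\rangle = \langle \langle v,u\rangle v,u\rangle = \langle v,u\rangle^2.
\]
Combining these yields $\langle y^{2j+1}w,w\rangle=\langle v,y^j w\rangle^2$, so the Hesselink condition ``$\langle y^{2j+1}w,w\rangle=0$ for all $w\in\ker(y^m)$'' reduces to the linear condition $y^j\ker(y^m)\subseteq v^\perp$, where $v^\perp$ is the hyperplane orthogonal to $v$ with respect to $\langle\cdot,\cdot\rangle$.

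With this reformulation in hand, I would take a concrete representative $(v,x)\in\Oo_{\mu;\nu}$ as constructed in the proof of \cite[Theorem 6.1]{ah}: $V$ is given a symplectic basis organized into Jordan chains for $x$ of lengths matching the parts of $(\mu+\nu)\cup(\mu+\nu)$, and $v$ is a distinguished element of $\sum_i x^{\nu_i}\ker(x^{\mu_i+\nu_i})$. On this model, $y$, the kernels $\ker(y^m)$, their images $y^j\ker(y^m)$, and the hyperplane $v^\perp$ can be written out directly in the basis. One then verifies (i) that the sequence $\dim\ker(y^k)$ matches the partition $\Phi^C(\mu;\nu)$, and (ii) that the least $j$ with $y^j\ker(y^m)\subseteq v^\perp$ equals $\chi_{\mu;\nu}(m)$. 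The combinatorial recipe defining $\Phi^C$ (replacing successive $2s,2t$ with $s+t,s+t$ when $2s<2t$) mirrors the way the rank-one perturbation $s(v)$ either leaves pairs of $x$-Jordan chains unchanged or fuses them into longer $y$-chains; and the prescription $\chi_{\mu;\nu}(2s)=s$ at an unreplaced part, or at a part arising from $2s<2t$, records exactly how many times one must apply $y$ to carry $\ker(y^m)$ into $v^\perp$.

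The main obstacle is the bookkeeping when $(\mu;\nu)$ produces many consecutive replacements in $(2\mu_1,2\nu_1,2\mu_2,2\nu_2,\ldots)$, since the corresponding Jordan chains of $y$ may knit together into longer chains in a nontrivial pattern. I would handle this by first analyzing a single replacement $(2s,2t)$ with $2s<2t$ in isolation, then a maximal run of consecutive replacements on its own, and finally assembling the global statement by decomposing $V$ into $y$-stable pieces supported on disjoint runs and applying additivity of Jordan types and kernel dimensions across the decomposition.
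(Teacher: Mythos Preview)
Your approach is exactly the first of the two approaches the paper mentions: ``taking an explicit representative $(v,x)\in\Oo_{\mu;\nu}$, and checking (by a somewhat painful calculation) that $\Psi(v,x)\in\cO_{\mu;\nu}^{C,2}$.'' The paper acknowledges this works but opts instead for a cleaner argument via the Springer correspondence: both $\cO_{\mu;\nu}^{C,2}$ (by Xue) and $\Oo_{\mu;\nu}$ (by Kato) correspond to the irreducible $W(C_n)$-representation $\chi^{\mu;\nu}$ under their respective Springer correspondences, and Kato's deformation argument then forces them to match under $\Psi$.

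Your identity $\langle y^{2j+1}w,w\rangle=\langle v,y^jw\rangle^2$ is correct and genuinely useful: it converts the quadratic Hesselink index condition into the linear containment $y^j\ker(y^m)\subseteq v^\perp$, which makes the direct verification much more tractable than the paper's phrase ``somewhat painful'' suggests. What your route buys is self-containment (no reliance on the Springer machinery in \cite{xue:combinatorics} and \cite{kato:deformations}); what the paper's route buys is brevity and conceptual clarity, since it avoids all case analysis on the explicit normal form. Note that your worry about ``consecutive replacements'' chaining together is less severe than you fear: by the quasi-partition property, replacements in the definition of $\Phi^C$ never overlap, so the Jordan-block analysis decomposes into independent pairs of chains. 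That said, your proposal remains an outline; the actual verification of the Jordan type of $y$ and of $\min\{j:y^j\ker(y^m)\subseteq v^\perp\}=\chi_{\mu;\nu}(m)$ on the normal form of \cite[Theorem~6.1]{ah} still needs to be written out to constitute a complete proof.
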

\begin{proof}
This result can be proved by taking an explicit representative $(v,x)\in\Oo_{\mu;\nu}$,
and checking (by a somewhat painful calculation) that $\Psi(v,x)\in\cO_{\mu;\nu}^{C,2}$.
However, the results of \cite{xue:combinatorics} and \cite{kato:deformations} provide
a cleaner proof: since the orbits $\cO_{\mu;\nu}^{C,2}$ and $\Oo_{\mu;\nu}$
correspond to the same irreducible representation of $W(C_n)$ under the respective
Springer correspondences, they must correspond under $\Psi$, by the deformation argument
of \cite[Theorem B]{kato:deformations}. Compare \cite[Theorem D]{kato:deformations}.
\end{proof}

An immediate consequence of the bijection between $\Oo_{\mu;\nu}$ and
$\cO_{\mu;\nu}^{C,2}$ is that they have the same number of $\F_q$-points
for any $q=2^s$; it is
clear from the descriptions of these orbits that they are defined over
$\F_q$, as is the bijection $\Psi$. Moreover, this number of $\F_q$-points
is given by the same polynomial $P_{\mu;\nu}(t)$ as in Proposition
\ref{prop:poly-ex}:
\begin{prop} \label{prop:poly-C2}
For any $(\mu;\nu)\in\cQ_n$ and any $q=2^s$,
\[
|\cO_{\mu;\nu}^{C,2}(\F_q)|=|\Oo_{\mu;\nu}(\F_q)|=P_{\mu;\nu}(q).
\]
\end{prop}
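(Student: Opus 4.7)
The plan is to split the double equality into its two halves and handle them independently. The first equality $|\cO_{\mu;\nu}^{C,2}(\F_q)|=|\Oo_{\mu;\nu}(\F_q)|$ is essentially immediate from Theorem \ref{thm:matchup}: the map $\Psi(v,x) = s(v)+x$ is defined over the prime field $\F_2$, since the formula for $s$ has the $(i,j)$-entry $a_i a_{2n+1-j}$ (integer-coefficient polynomial in the coordinates of $v$) and $x$ enters linearly. So for any $q=2^s$, $\Psi$ restricts to a bijection of $\F_q$-points $\Oo_{\mu;\nu}(\F_q) \isomto \cO_{\mu;\nu}^{C,2}(\F_q)$, and the first equality follows.

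For the second equality I would simply rerun the proof of Proposition \ref{prop:poly-ex} in characteristic $2$. The argument there uses only two ingredients: (i) the stabilizer $Sp(V)^{(v,x)}$ at a point $(v,x)\in\Oo_{\mu;\nu}$ is connected, by \cite[Proposition 4.5]{kato:exotic}; and (ii) the reductive quotient of $Sp(V)^{(v,x)}$ is a product of symplectic groups whose ranks are determined by an inductive combinatorial recipe from $(\mu;\nu)$ given in \cite[Section 4]{springer:exotic}. Both statements are characteristic-independent in their formulation and proof. Combined with the orbit-stabilizer identity $|Sp(V)(\F_q)| = |\Oo_{\mu;\nu}(\F_q)|\cdot|Sp(V)^{(v,x)}(\F_q)|$ and the Grothendieck trace formula, this forces $|\Oo_{\mu;\nu}(\F_q)|$ to be a polynomial in $q$ with integer coefficients; because every numerical input to that polynomial is the same as in the characteristic $\neq 2$ case, it must be the same polynomial $P_{\mu;\nu}(t)$.

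The main (minor) obstacle is purely bibliographic: confirming that the cited results of Kato and Springer on the structure of $Sp(V)^{(v,x)}$ are indeed valid in characteristic $2$. Since \cite{kato:deformations} is written precisely to develop the characteristic-$2$ theory of $\fN$ (and Theorem \ref{thm:kato} has already ported the orbit classification and closure order over), and the Springer recipe for the rank data depends only on the combinatorics of the explicit orbit representative used in \cite[Theorem 6.1]{ah}, this verification should be straightforward; no new geometric input beyond what is already assembled in the excerpt is required.
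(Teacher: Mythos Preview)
Your proposal is correct and matches the paper's own argument essentially line for line: the first equality is dispatched via the $\F_q$-rationality of $\Psi$ together with Theorem~\ref{thm:matchup}, and the second by transporting the stabilizer description of \cite[Proposition~4.5]{kato:exotic} to characteristic~$2$ (the paper notes in passing that Spaltenstein had already observed the analogous fact for $\cN(\fsp(V))$ in \cite[3.9]{spaltenstein}) and using that the ranks of the symplectic factors in the reductive quotient are characteristic-independent.
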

\begin{proof}
Now that Theorem \ref{thm:kato} is known, the description of stabilizers
for the exotic nilpotent cone given in \cite[Proposition 4.5]{kato:exotic} carries
across verbatim to the case of characteristic $2$. So the reductive quotient
of each stabilizer is a product of symplectic groups (Spaltenstein observed the equivalent
fact for $\cN(\fsp(V))$
in \cite[3.9]{spaltenstein}), and the ranks of these symplectic groups
are the same as for the corresponding orbit in characteristic
$\neq 2$. Since the polynomial giving $|Sp_{2k}(\F_q)|$ is independent of characteristic,
the result follows.
\end{proof}
In particular, the dimensions of the orbits $\cO_{\mu;\nu}^{C,2}$ and
$\Oo_{\mu;\nu}$ in characteristic $2$ are the same as that of
$\Oo_{\mu;\nu}$ in characteristic $\neq 2$, namely $2(n^2-b(\mu;\nu))$.

We now define locally closed subvarieties
of $\cN(\fsp(V))$ corresponding to the type-$C$ and special pieces
of the exotic nilpotent cone:
\begin{defn} \label{defn:nilppiece-C}
For any $(\mu;\nu)\in\cQ_n^C$ (in the former case) or $(\mu;\nu)\in\cQ_n^\circ$ 
(in the latter case), define
\[
\cT_{\mu;\nu}^C=\bigdisjunion_{\substack{(\rho;\sigma)\in\cQ_n\\
(\rho;\sigma)^C=(\mu;\nu)}} \cO_{\rho;\sigma}^{C,2}
\quad\text{and}\quad
\cS_{\mu;\nu}^C=\bigdisjunion_{\substack{(\rho;\sigma)\in\cQ_n\\
(\rho;\sigma)^\circ=(\mu;\nu)}} \cO_{\rho;\sigma}^{C,2}.
\]
\end{defn}
By definition, $\cT_{\mu;\nu}^C$ consists of all elements $y\in\cN(\fsp(V))$ whose
Jordan type is $\Phi^C(\mu;\nu)$; note that in characteristic $\neq 2$, the same
definition gives the orbit $\cO_{\mu;\nu}^{C}$. This subvariety $\cT_{\mu;\nu}^C$ is the 
\emph{nilpotent piece} defined by Lusztig and Xue in
\cite[Appendix]{lusztig:unip} (see also \cite{xue:note}). 

\begin{prop}[Lusztig] \label{prop:nilppiece-C}
For any $(\mu;\nu)\in\cQ_n^C$ and any $q=2^s$,
\[ |\cT_{\mu;\nu}^C(\F_q)|=P_{\mu;\nu}^C(q). \]
\end{prop}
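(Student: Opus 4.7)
The plan is to identify $\cT_{\mu;\nu}^C$ as one of the nilpotent pieces defined by Lusztig and Xue, and then invoke their characteristic-independent count to obtain the polynomial $P_{\mu;\nu}^C(t)$. The argument breaks naturally into three steps.

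First, I would verify the set-theoretic description
\[
\cT_{\mu;\nu}^C = \{\,y \in \cN(\fsp(V)) \mid y \text{ has Jordan type } \Phi^C(\mu;\nu)\,\}.
\]
By Corollary \ref{cor:hphiC}, the indexing condition $(\rho;\sigma)^C = (\mu;\nu)$ used in Definition \ref{defn:nilppiece-C} is equivalent to $\Phi^C(\rho;\sigma) = \Phi^C(\mu;\nu)$. Combining this with Theorem \ref{thm:spalt-C}(1), which cuts out each orbit $\cO_{\rho;\sigma}^{C,2}$ by a Jordan-type condition together with a Hesselink-index condition, shows that the disjoint union defining $\cT_{\mu;\nu}^C$ precisely exhausts the locus of Jordan type $\Phi^C(\mu;\nu)$: taking the union over all admissible $(\rho;\sigma)$ allows the Hesselink index to range over all legal values, so the refining condition drops out.

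Second, I would observe that in characteristic $\neq 2$, Theorem \ref{thm:gerst-C}(1) defines the single orbit $\cO_{\mu;\nu}^C$ by exactly the same Jordan-type condition. Hence $\cT_{\mu;\nu}^C$ in characteristic $2$ and $\cO_{\mu;\nu}^C$ in odd characteristic are two specializations of the same uniform type-$C_n$ construction, namely the Lusztig--Xue nilpotent piece labelled by $\Phi^C(\mu;\nu)$.

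Third, I would invoke the appendix of \cite{lusztig:unip} (the same reference already cited in the proof of Proposition \ref{prop:poly-C}), which establishes that each such nilpotent piece has a number of $\F_q$-points given by an integer polynomial in $q$ independent of the characteristic of $\F$. Applied to the piece at hand, this polynomial must be $P_{\mu;\nu}^C(t)$ by the very definition of $P_{\mu;\nu}^C$ in Proposition \ref{prop:poly-C}, yielding the desired equality. The substantive content is entirely bundled into the Lusztig--Xue characteristic-independence theorem; once that is granted, the only work is the combinatorial identification of $\cT_{\mu;\nu}^C$ as a Jordan-type locus, which is immediate from Corollary \ref{cor:hphiC} and Theorem \ref{thm:spalt-C}(1).
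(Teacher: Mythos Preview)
Your proposal is correct and follows essentially the same approach as the paper: identify $\cT_{\mu;\nu}^C$ with the Lusztig--Xue nilpotent piece and invoke their characteristic-independence result from \cite[A.6]{lusztig:unip}. Your first two steps (the Jordan-type description and the comparison with $\cO_{\mu;\nu}^C$ in odd characteristic) are already stated by the paper in the paragraph immediately following Definition~\ref{defn:nilppiece-C}, so the paper's proof simply cites \cite[A.6]{lusztig:unip} directly, with the additional remark that the type-$C$ argument was essentially present earlier in \cite[3.14]{lusztig:unip1}.
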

\begin{proof}
This is the type-$C$ case of the statement
at the end of \cite[A.6]{lusztig:unip},
that the number of $\F_q$-points in a nilpotent 
piece is given by a polynomial which is
independent of the characteristic. The argument for type $C$
was essentially already present in \cite[3.14]{lusztig:unip1}.
\end{proof}

Combining Propositions \ref{prop:poly-C2} and \ref{prop:nilppiece-C}, one deduces the
$t=2^s$ case, and hence the general case, of the polynomial identity which is
Theorem \ref{thm:main}(2).
\section{Comparing exotic and orthogonal nilpotent cones in characteristic 2}
Continue the assumptions and notation of the previous section. Write $\F e_0$ for
a one-dimensional vector space over $\F$ with basis $e_0$, and set
$\tV=\F e_0\oplus V$. We extend the quadratic form $Q$ to a quadratic form $\tQ$ on $\tV$,
defined by $\tQ(ae_0+v)=a^2+Q(v)$ for $a\in\F$, $v\in V$. The associated
alternating (and hence symmetric) form $\langle\cdot,\cdot\rangle$ on $\tV$ has $\F e_0$
as its radical, and restricts to the same nondegenerate form on $V$ as
in the previous section.

Let $O(\tV)$ be the stabilizer in $GL(\tV)$ of the quadratic form $\tQ$. This is
a connected simple group of type $B_n$ (for $n\geq 2$). Note that $O(\tV)$ preserves
the subspace $\F e_0$ and acts trivially on it, but does not preserve the subspace $V$;
the subgroup preserving $V$ can be identified with $O(V)$.
However, we can make $O(\tV)$ act on $V$ by identifying $V$ with $\tV/\F e_0$.
This gives a homomorphism $\Gamma:O(\tV)\to Sp(V)$
which is a bijective isogeny (not an
isomorphism of algebraic groups). The restriction of $\Gamma$ to $O(V)$ is 
the identity $O(V)\to O(V)$. 
Again, a reference is \cite[Exercises 7.4.7]{springer}.

The Lie algebra of $O(\tV)$ is
\[
\fo(\tV)=\{x\in\End(\tV)\,|\,xe_0=0,\,\langle xv,v\rangle=0,\text{ for all }v\in V\}.
\]
The Lie algebra homomorphism $\gamma$ induced by $\Gamma$ is the surjective
homomorphism $\fo(\tV)\to\fo(V)$ obtained by identifying $V$ with $\tV/\F e_0$ (in 
particular, its image is not the whole of $\fsp(V)$). The kernel of $\gamma$
can be identified with $V$, via the map $\delta$ which sends $v\in V$ to the
endomorphism $w\mapsto \langle v,w\rangle e_0$ of $\tV$. Thus we have a short exact
sequence of representations of $O(\tV)$:
\begin{equation} \label{ses2}
\xymatrix{
0 \ar[r] & V \ar[r]^\delta & \fo(\tV) \ar[r]^\gamma & \fo(V) \ar[r] & 0
}
\end{equation}
In matrix terms, $\gamma$ simply deletes the $0$th row and column (that is, the row and
column corresponding to $e_0$), and $\delta(a_1e_1+\cdots+a_{2n}e_{2n})$ 
is the matrix whose entry in
row $0$ and column $i$ is $a_{2n+1-i}$ for $1\leq i\leq 2n$, 
with all other entries being zero.

There is an obvious section of $\gamma$, namely the embedding
$i:\fo(V)\to\fo(\tV)$ whose image consists of the endomorphisms preserving the
subspace $V$. Note that $i$ is $O(V)$-equivariant but not $O(\tV)$-equivariant.
As a result, we have an $O(V)$-equivariant vector space isomorphism
\begin{equation}
\tPsi:V\oplus\fo(V)\to\fo(\tV):(v,x)\mapsto\delta(v)+i(x).
\end{equation}
This trivial observation allows us to connect the exotic nilpotent cone with 
the nilpotent cone in $\fo(\tV)$.

\begin{prop} \label{prop:nonequivisom}
$\tPsi$ restricts to an $O(V)$-equivariant 
\textup{(}but not $O(\tV)$-equivariant\textup{)} isomorphism of varieties
$\fN\to\cN(\fo(\tV))$.
\end{prop}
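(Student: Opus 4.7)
The plan is to read off the statement from the matrix shape of $\tPsi(v,x)$ in the basis $e_0, e_1, \dots, e_{2n}$ of $\tV$. By construction $i(x)$ preserves $V$ and kills $e_0$, so its matrix has the block form $\begin{pmatrix} 0 & 0 \\ 0 & X \end{pmatrix}$, where $X$ is the matrix of $x$ on $V$. The explicit description of $\delta$ recalled above shows that $\delta(v)$ is supported only in row $0$, columns $1$ through $2n$, with entries encoding $v$. Hence
\begin{equation*}
\tPsi(v,x) \;=\; \begin{pmatrix} 0 & r(v) \\ 0 & X \end{pmatrix},
\end{equation*}
where $r(v)$ is the row vector with $j$th entry $a_{2n+1-j}$ (for $v = \sum a_i e_i$). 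Taking powers gives $\tPsi(v,x)^k = \begin{pmatrix} 0 & r(v)\, X^{k-1} \\ 0 & X^k \end{pmatrix}$, so $\tPsi(v,x)$ is nilpotent if and only if $x$ is nilpotent.

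This immediately identifies $\tPsi^{-1}(\cN(\fo(\tV)))$ with $V \times \cN(\fo(V)) = \fN$, so $\tPsi$ restricts to a bijection $\fN \to \cN(\fo(\tV))$. Since $\tPsi$ is already a linear isomorphism of the ambient vector spaces, and both $\fN$ and $\cN(\fo(\tV))$ are Zariski-closed subvarieties of these spaces, the restriction is an isomorphism of algebraic varieties with no further work.

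For the equivariance claim, I would note that $\delta$ is $O(\tV)$-equivariant by the short exact sequence displayed in the excerpt, while the splitting $i \colon \fo(V) \to \fo(\tV)$ is $O(V)$-equivariant: for $g \in O(V) \subset O(\tV)$, $g$ fixes $e_0$ and preserves $V$, so $g\, i(x)\, g^{-1} = i(gxg^{-1})$. Combining these, $\tPsi$ is $O(V)$-equivariant. Conversely, a general $g \in O(\tV)$ does not preserve $V$, so $g\, i(x)\, g^{-1}$ is typically not of the form $i(y)$; hence the splitting $i$ (and therefore $\tPsi$) is not $O(\tV)$-equivariant. The proof is essentially mechanical; its sole content is the upper-triangular block shape of $\tPsi(v,x)$, and no substantive obstacle arises.
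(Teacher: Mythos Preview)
Your proof is correct and is essentially the same as the paper's, just spelled out in matrix coordinates: the paper observes that $\F e_0\subseteq\ker(\tPsi(v,x))$ and that the induced endomorphism of $\tV/\F e_0\cong V$ is $x$, which is exactly your block upper-triangular shape $\begin{pmatrix}0&r(v)\\0&X\end{pmatrix}$. The paper leaves the equivariance and isomorphism-of-varieties claims implicit from the preceding discussion, whereas you make them explicit; no substantive difference.
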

\begin{proof}
Let $(v,x)\in V\oplus\fo(V)$.
Since $\F e_0\subseteq\ker(\tPsi(v,x))$, $\tPsi(v,x)$ is nilpotent if and only if
it induces a nilpotent endomorphism of $\tV/\F e_0\cong V$. But the endomorphism it
induces is exactly $x$.
\end{proof}

\begin{rmk}\label{rmk:deformation}
We will not need it, but there is a way to regard $\fN$ as an $O(\tV)$-equivariant 
flat deformation of
$\cN(\fo(\tV))$, in the spirit of \cite{kato:deformations}. Namely, let $O(\tV)$
act on $\fN\times\F$ by the rule
\begin{equation}
g.(v,x,t)=(\Gamma(g)v+t\,\Gamma(g)xc(g^{-1}),\Gamma(g).x,t),
\end{equation}
where $c:O(\tV)\to V$ is the $1$-cocycle defined by
\begin{equation}
\langle c(g),v\rangle=Q(\Gamma(g^{-1})v)^{1/2}+Q(v)^{1/2},\text{ for all }g\in O(\tV),\,
v\in V.
\end{equation}
Then the action of $O(\tV)$ on the zero fibre $\fN\times\{0\}$ is the standard action
on $\fN$ via the isogeny to $Sp(V)$, whereas for $t\neq 0$, we have an $O(\tV)$-equivariant
isomorphism
\begin{equation}
\tPsi_t:\fN\times\{t\}\to\cN(\fo(\tV)):(v,x,t)\mapsto t^{-1}\delta(v)+i(x). 
\end{equation}
The isomorphism $\tPsi$ of Proposition \ref{prop:nonequivisom} is the $t=1$
case of $\tPsi_t$.
\end{rmk}

To recall Spaltenstein's classification of orthogonal nilpotent orbits, we need
some more notation. Let $\cP_{2n}^{\dup}$ denote the set of partitions of
$2n$ which are of the form $\rho\cup\rho$ for some partition $\rho$ of $n$, and let
$\cP_{2n+1}^{\dup}$ denote the set of partitions of
$2n+1$ which after subtracting $1$ from a single part (that is, removing one box from
the Young diagram) become a partition in $\cP_{2n}^{\dup}$.
We define a map 
$\Phi^{B,2}:\cQ_n\to\cP_{2n+1}^{\dup}$ by setting
$\Phi^{B,2}(\mu;\nu)$ to be $(\mu+\nu)\cup(\mu+\nu)$ with $1$
added to the $(2\ell(\nu)+1)$th part (making that part, which was $\mu_{\ell(\nu)+1}$,
now equal to $\mu_{\ell(\nu)+1}+1$).

\begin{exam}
When $n=2$, the map $\Phi^{B,2}:\cQ_2\to\cP_5^{\dup}$ is as follows:
\[
\begin{split}
\Phi^{B,2}((2);\varnothing)&=(32),\
\Phi^{B,2}((1);(1))=(2^21),\
\Phi^{B,2}(\varnothing;(2))=(2^21),\\
\Phi^{B,2}((1^2);\varnothing)&=(21^3),\
\Phi^{B,2}(\varnothing;(1^2))=(1^5). 
\end{split}
\]
\end{exam}

We also define a function $\tchi_{\mu;\nu}$ on the set of numbers which occur
as parts of $\Phi^{B,2}(\mu;\nu)$, by the (clearly self-consistent) rules
\[ \tchi_{\mu;\nu}(\mu_{\ell(\nu)+1}+1)=\mu_{\ell(\nu)+1}+1,\
\tchi_{\mu;\nu}(\mu_i+\nu_i)=\begin{cases}
\mu_i+1,&\text{ if $i\leq\ell(\nu)$,}\\
\mu_i,&\text{ if $i>\ell(\nu)$.}
\end{cases} \]

Finally, we define a new sub-poset of $\cQ_n$,
containing the sub-poset $\cQ_n^B$:
\[
\cQ_n^{B,2}=\{(\mu;\nu)\in\cQ_n\,|\,\mu_i\geq\nu_i-2,\text{ for all }i\}.
\]

\begin{thm}[Hesselink, Spaltenstein] \label{thm:spalt-B}
\cite[3.2, 4.2]{spaltenstein}
\begin{enumerate}
\item
The $O(\tV)$-orbits in $\cN(\fo(\tV))$ are in bijection with $\cQ_n^{B,2}$.
For $(\mu;\nu)\in\cQ_n^{B,2}$, the corresponding orbit $\cO_{\mu;\nu}^{B,2}$ consists
of those $y\in\cN(\fo(\tV))$ whose Jordan type is $\Phi^{B,2}(\mu;\nu)$ and which
satisfy 
\[
\min\{j\geq 0\,|\,\tQ(y^j w)=0,\textup{ for all }w\in\ker(y^m)\}=
\tchi_{\mu;\nu}(m),
\]
for all parts $m$ of $\Phi^{B,2}(\mu;\nu)$.
\item For $(\rho;\sigma),(\mu;\nu)\in\cQ_n^{B,2}$,
\[
\cO_{\rho;\sigma}^{B,2}\subseteq\overline{\cO_{\mu;\nu}^{B,2}}\Longleftrightarrow
(\rho;\sigma)\leq(\mu;\nu).
\]
\end{enumerate}
\end{thm}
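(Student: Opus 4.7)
The plan is to follow Hesselink's general framework for nilpotent orbits of classical Lie algebras in arbitrary characteristic, and then carry out the combinatorial repackaging that turns the resulting data into bipartitions.

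First, I would attach to each $y \in \cN(\fo(\tV))$ two manifestly $O(\tV)$-invariant discrete invariants. The first is the Jordan type $\lambda$ of $y$ as a nilpotent endomorphism of $\tV$. Since $\langle\cdot,\cdot\rangle$ has $\F e_0$ as its radical and $y e_0 = 0$, one can analyze the induced nilpotent endomorphism on $\tV/\F e_0 \cong V$; it preserves the nondegenerate alternating form, so by the analogue of Hesselink's theorem on $\cN(\fsp(V))$ its Jordan type lies in $\cP_{2n}^{\dup}$. Adjoining the extra kernel direction $\F e_0$ forces $\lambda \in \cP_{2n+1}^{\dup}$. The second invariant is the Hesselink function on parts of $\lambda$,
\[ \tchi(m) = \min\{j \geq 0 \mid \tQ(y^j w) = 0 \text{ for all } w \in \ker(y^m)\}, \]
measuring the "degree of anisotropy" of $\tQ$ on each kernel subspace.

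Next, I would show that $(\lambda, \tchi)$ is a complete $O(\tV)$-invariant by exhibiting an explicit canonical form for each admissible pair: decompose $\tV$ as an orthogonal direct sum of $y$-cyclic subspaces on which $\tQ$ takes a standard shape controlled by $\tchi$, and prove that any two such decompositions with matching data are conjugate by an element of $O(\tV)$. This is the main obstacle. In characteristic $2$ the interplay between $\tQ$ and its polarization is delicate; one must verify that conjugation can be arranged inside $O(\tV)$ rather than merely in $GL(\tV)$ or $Sp(V)$, especially when several cyclic summands share the same Jordan block length and their isometry group acquires Frobenius-semilinear directions that must be matched.

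The identification with $\cQ_n^{B,2}$ is then purely combinatorial: I would verify that $(\mu;\nu) \mapsto (\Phi^{B,2}(\mu;\nu), \tchi_{\mu;\nu})$ is a bijection from $\cQ_n^{B,2}$ onto the admissible pairs $(\lambda, \tchi)$ produced above, with the single inequality $\mu_i \geq \nu_i - 2$ encoding exactly the compatibility between $\lambda$ and $\tchi$. Finally, for part (2), I would use upper-semicontinuity of the rank functions $y \mapsto \dim\ker(y^k)$ and $y \mapsto \dim\{w \in \ker(y^m) \mid \tQ(y^j w) = 0\}$ to obtain the "only if" direction, and prove the "if" direction by exhibiting one-parameter degenerations between the canonical forms --- or, alternatively, by transporting Theorem \ref{thm:kato}(2) for $\fN$ through the $O(V)$-equivariant isomorphism $\tPsi$ of Proposition \ref{prop:nonequivisom} and tracking how $Sp(V)$-orbits on $\fN$ assemble into $O(\tV)$-orbits on $\cN(\fo(\tV))$.
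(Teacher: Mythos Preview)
The paper does not give its own proof of this theorem: it is stated with the attribution ``Hesselink, Spaltenstein'' and the citation \cite[3.2, 4.2]{spaltenstein}, and no argument follows. So there is nothing to compare your proposal against within the paper itself; the authors simply import the result from Spaltenstein's 1982 paper, which in turn builds on Hesselink's classification.

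Your outline is a reasonable reconstruction of how that classification goes, and the two invariants you isolate (Jordan type in $\cP_{2n+1}^{\dup}$ and the index function $\tchi$) are indeed the ones Hesselink and Spaltenstein use. A couple of cautions. First, the step you flag as ``the main obstacle'' --- producing a canonical form and proving $O(\tV)$-conjugacy of any two realizations --- is genuinely the bulk of the work, and your sketch does not indicate how you would handle the degenerate radical direction $\F e_0$, which is exactly what distinguishes this case from the type-$D$ or type-$C$ classifications. Second, your alternative route for part~(2), transporting Theorem~\ref{thm:kato}(2) through $\tPsi$, is not self-contained: $\tPsi$ is only $O(V)$-equivariant, not $O(\tV)$-equivariant, so knowing the $Sp(V)$-orbit closure order on $\fN$ does not directly give you the $O(\tV)$-orbit closure order on $\cN(\fo(\tV))$ without already knowing how the latter orbits decompose --- which is essentially what you are trying to prove. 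The semicontinuity-plus-degeneration approach you mention first is the honest one.
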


In \cite[Theorem 9.2]{xue:combinatorics}, it is proved that the orbit 
$\cO_{\mu;\nu}^{B,2}$, with the trivial local system, corresponds to the
irreducible representation of $W(B_n)$ with character $\chi^{\mu;\nu}$ under the
Springer correspondence.

The preimages under $\tPsi$ of the orbits $\cO_{\mu;\nu}^{B,2}$ are some 
$O(V)$-stable locally closed subvarieties of the exotic nilpotent cone $\fN$,
which are not, in general, unions of the orbits $\Oo_{\rho;\sigma}$ described
in Theorem \ref{thm:kato} (that is, they are not $Sp(V)$-stable). To relate these
two types of pieces of $\fN$, it is convenient to collect together various orbits
according to the Jordan type of the nilpotent endomorphism. 

For $(\rho;\sigma),(\mu;\nu)\in\cQ_n$, we write
\[
(\rho;\sigma)\preceq(\mu;\nu)\text{ to mean that }(\rho;\sigma)\leq(\mu;\nu)
\text{ and }\rho+\sigma=\mu+\nu.
\]
Note that under the assumption $\rho+\sigma=\mu+\nu$, the condition
$(\rho;\sigma)\leq(\mu;\nu)$ is equivalent to the condition 
that $\rho_i\leq\mu_i$ for all $i$.
\begin{defn}
For any $(\mu;\nu)\in\cQ_n$ (in the former case) or 
$(\mu;\nu)\in\cQ_n^{B,2}$ (in the latter case), define
\[
\Ee_{\mu;\nu}=\bigdisjunion_{\substack{(\rho;\sigma)\in\cQ_n\\(\rho;\sigma)\preceq(\mu;\nu)}}
\Oo_{\rho;\sigma}\quad\text{and}\quad
\cE_{\mu;\nu}=\bigdisjunion_{\substack{(\rho;\sigma)\in\cQ_n^{B,2}\\
(\rho;\sigma)\preceq(\mu;\nu)}}\cO_{\rho;\sigma}^{B,2}.
\]
\end{defn}
It follows from Theorem \ref{thm:kato}(2) that $\Ee_{\mu;\nu}$ is an open
subvariety of the closure $\overline{\Oo_{\mu;\nu}}$, and from 
Theorem \ref{thm:spalt-B}(2) that $\cE_{\mu;\nu}$ is an open subvariety
of the closure $\overline{\cO_{\mu;\nu}^{B,2}}$.

\begin{prop} \label{prop:vectorbundle}
Let $(\mu;\nu)\in\cQ_n$.
\begin{enumerate}
\item
$\Ee_{\mu;\nu}$ consists of those $(v,x)\in\fN$ which satisfy the following conditions:
\begin{enumerate}
\item $x$ has Jordan type $(\mu+\nu)\cup(\mu+\nu)$.
\item For all $i$ and all $u\in\ker(x^{\mu_i+\nu_i})$, we have
$\langle v,x^{\mu_i}u\rangle=0$.
\end{enumerate}
\item
The projection $\Ee_{\mu;\nu}\to\cO_{\varnothing;\mu+\nu}^{C,2}:(v,x)\mapsto x$
is a vector bundle of rank $2|\mu|$.
\end{enumerate}
\end{prop}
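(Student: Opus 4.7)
The plan is to characterize $\Ee_{\mu;\nu}$, for $x$ of fixed Jordan type $\beta\cup\beta$ with $\beta := \mu+\nu$, via a Jordan-basis analysis of subspaces of the form $\sum_i x^{\alpha_i}(\ker x^{\beta_i})$. Fix $x\in\cN(\fo(V))$ of Jordan type $\beta\cup\beta$, and choose a Jordan basis $\{e_{i,\varepsilon,k}\}$ (with $\varepsilon\in\{1,2\}$ distinguishing the pair of blocks of size $\beta_i$, and $xe_{i,\varepsilon,k}=e_{i,\varepsilon,k-1}$). The key combinatorial lemma I would prove first: for any pair of partitions $\alpha,\gamma$ with $\alpha+\gamma=\beta$, the subspace $\sum_i x^{\alpha_i}(\ker x^{\beta_i})$ meets the $(j,\varepsilon)$-block in $\Span(e_{j,\varepsilon,1},\ldots,e_{j,\varepsilon,\gamma_j})$ and thus has dimension $2|\gamma|$. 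This reduces to showing that the maximum over $i$ of $\max(0,\min(\beta_i,\beta_j)-\alpha_i)$ equals $\gamma_j$; I would verify this by splitting into the cases $i\le j$ and $i\ge j$ and using monotonicity of $\alpha$ and $\gamma$.

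Since $x\in\fsp(V)$ satisfies $\langle xv,w\rangle=\langle v,xw\rangle$, we have $(\ker x^k)^\perp=\im x^k$, so condition (b) of part (1) amounts to $v\in U_x:=\bigl(\sum_i x^{\mu_i}(\ker x^{\beta_i})\bigr)^\perp$. Applying the lemma to $(\alpha,\gamma)=(\mu,\nu)$ gives $\dim U_x = 2n-2|\nu| = 2|\mu|$; applied to $(\alpha,\gamma)=(\nu,\mu)$, the subspace $W_0 := \sum_i x^{\nu_i}(\ker x^{\beta_i})$ also has dimension $2|\mu|$. The inclusion $W_0\subseteq U_x$ follows either from a direct pairing calculation $\langle x^{\nu_j}w,x^{\mu_i}u\rangle=\langle w,x^{\mu_i+\nu_j}u\rangle$ (splitting on $j\le i$ and $j>i$), or by noting that within each Jordan pair the block supports of $W_0$ and $\sum_i x^{\mu_i}(\ker x^{\beta_i})$ are dual under the standard anti-diagonal form. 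Equality $U_x=W_0$ then follows from the dimension count.

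For part (1), the forward direction is immediate: if $(v,x)\in\Oo_{\rho;\sigma}$ with $(\rho;\sigma)\preceq(\mu;\nu)$, then $\rho+\sigma=\beta$ gives (a), while $v\in W_1 := \sum_i x^{\sigma_i}(\ker x^{\beta_i})$; the lemma applied to $(\alpha,\gamma)=(\sigma,\rho)$ versus $(\nu,\mu)$ shows $W_1\subseteq W_0 = U_x$ block-wise, because $\rho_j\le\mu_j$, giving (b). For the converse, (a) places $(v,x)$ in some $\Oo_{\rho;\sigma}$ with $\rho+\sigma=\beta$ by Theorem \ref{thm:kato}(1); assuming $(\rho;\sigma)\not\preceq(\mu;\nu)$, some $\rho_j>\mu_j$, and the same block-wise comparison yields $W_1\not\subseteq U_x$. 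Since $U_x$ is $Sp(V)^x$-stable and the $Sp(V)^x$-orbit of $v$ is open dense in $W_1$, the orbit cannot lie in the proper closed subspace $W_1\cap U_x$; by invariance it is disjoint from $U_x$, contradicting (b).

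For part (2), the characterization just established identifies the fiber of the projection $(v,x)\mapsto x$ over each $x\in\cO_{\varnothing;\beta}^{C,2}$ with the linear subspace $U_x$ of constant dimension $2|\mu|$. Since $U_{gxg^{-1}} = g\cdot U_x$ for $g\in Sp(V)$ and the base orbit is the homogeneous space $Sp(V)/Sp(V)^{x_0}$, $\Ee_{\mu;\nu}$ is identified with the associated vector bundle $Sp(V)\times_{Sp(V)^{x_0}} U_{x_0}$, which has rank $2|\mu|$. The main obstacle will be the combinatorial lemma of the first paragraph, but it reduces to a routine inequality once the Jordan-basis bookkeeping is set up cleanly; everything else follows by direct invocation.
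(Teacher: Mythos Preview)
Your proposal is correct and follows essentially the same approach as the paper. The paper's proof is terser: it asserts directly (citing Theorem~\ref{thm:kato} and implicitly the orbit description from \cite{ah}) that $\Ee_{\mu;\nu}$ is the set of $(v,x)$ with $x$ of the required Jordan type and $v\in W_0=\sum_i x^{\nu_i}(\ker x^{\mu_i+\nu_i})$, declares $\dim W_0=2|\mu|$ ``easy to see'', and then proves $W_0=U_x$ by exactly your pairing calculation plus dimension count. Your Jordan-basis lemma and your separate forward/converse arguments for part~(1) simply make explicit what the paper leaves to the reader or to \cite{ah}; the mathematical core is identical.
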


\begin{proof}
From Theorem \ref{thm:kato}, it follows that $\Ee_{\mu;\nu}$ consists of those
$(v,x)\in\fN$ such that $x$ has Jordan type $(\mu+\nu)\cup(\mu+\nu)$ and $v$ belongs to
the subspace $\sum_i x^{\nu_i}(\ker(x^{\mu_i+\nu_i}))$. It is easy to see that this
subspace has dimension $2|\mu|$, so we need only prove that it is the perpendicular
subspace to $\sum_i x^{\mu_i}(\ker(x^{\mu_i+\nu_i}))$. Since the dimensions are
correct, it suffices to show that for all $i,j$ and all $u_i\in\ker(x^{\mu_i+\nu_i})$,
$u_j\in\ker(x^{\mu_j+\nu_j})$, we have $\langle x^{\nu_i}u_i,x^{\mu_j}u_j\rangle=0$.
But if $i\geq j$, then 
$\langle x^{\nu_i}u_i,x^{\mu_j}u_j\rangle=\langle x^{\mu_j+\nu_i}u_i,u_j\rangle=0$,
since $\mu_j\geq\mu_i$; and if $i<j$, then
$\langle x^{\nu_i}u_i,x^{\mu_j}u_j\rangle=\langle u_i,x^{\mu_j+\nu_i}u_j\rangle=0$,
since $\nu_i\geq\nu_j$. The proof is finished.
\end{proof}

\begin{prop} \label{prop:jordan}
Let $(\rho;\sigma)\in\cQ_n$ and $(v,x)\in\fN$. Then $\tPsi(v,x)$ has Jordan type
$\Phi^{B,2}(\rho;\sigma)$ if and only if the following conditions hold:
$x$ has Jordan type $(\rho+\sigma)\cup(\rho+\sigma)$, and 
\begin{equation} \label{mineqn}
\min\{j\geq 1\,|\,\langle v,x^{j-1}(\ker(x^j))\rangle=0\}=\rho_{\ell(\sigma)+1}+1.
\end{equation}
\end{prop}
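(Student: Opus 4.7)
The plan is to read off the Jordan type of $y=\tPsi(v,x)=\delta(v)+i(x)$ from two invariants of $(v,x)$: the Jordan type of $x$ on $V$, and the integer $j_0=\min\{j\geq 1\,|\,\langle v,x^{j-1}(\ker(x^j))\rangle=0\}$ appearing in \eqref{mineqn}. Since $y(e_0)=0$ and $y(w)=x(w)+\langle v,w\rangle e_0$ for $w\in V$, a direct induction on $j$ will give
\[ y^j(w) = x^j(w) + \langle v, x^{j-1}(w)\rangle e_0\quad\text{for all }w\in V,\ j\geq 1. \]
Combined with $\F e_0\subseteq\ker(y)$, this yields
\[ \dim\ker(y^j) = 1 + \dim\ker(x^j) - a_j, \]
where $a_j\in\{0,1\}$ records whether the linear form $w\mapsto\langle v,x^{j-1}(w)\rangle$ is nonzero on $\ker(x^j)$. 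Using a Jordan basis for $x$, I would check that $a_j=0$ is equivalent to $\langle v, x^{\lambda_k-1}u_k\rangle=0$ for every Jordan block $\F[x]u_k$ of size $\lambda_k\geq j$; this characterization makes it clear that $a_j$ is non-increasing in $j$, so $j_0$ is precisely the smallest $j$ with $a_j=0$.

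Next, I would compare consecutive differences of $\dim\ker(y^j)$ and $\dim\ker(x^j)$, which compute the conjugate partitions of the Jordan types of $y$ and $x$ respectively. The two sequences of first differences agree everywhere except at $j=j_0$, where the $y$-sequence is larger by $1$; equivalently, the Jordan type of $y$ is obtained from that of $x$ by incrementing a single part of size $j_0-1$ to $j_0$ (creating a new part of size $1$ when $j_0=1$). Applying Theorem \ref{thm:spalt-C} with $\mu=\varnothing$ to $x\in\cN(\fo(V))$ shows additionally that the Jordan type of $x$ is always a duplicated partition.

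The $\Leftarrow$ direction of the proposition is then immediate from the definition of $\Phi^{B,2}$: if the Jordan type of $x$ is $(\rho+\sigma)\cup(\rho+\sigma)$ and $j_0=\rho_{\ell(\sigma)+1}+1$, then the $(2\ell(\sigma)+1)$-th part of $(\rho+\sigma)\cup(\rho+\sigma)$ is $\rho_{\ell(\sigma)+1}=j_0-1$, and incrementing it produces $\Phi^{B,2}(\rho;\sigma)$. For $\Rightarrow$, I would show that $(\rho+\sigma)\cup(\rho+\sigma)$ is the \emph{unique} duplicated partition obtained from $\Phi^{B,2}(\rho;\sigma)$ by removing a single box. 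The hard step is this multiplicity bookkeeping: starting from $(\rho+\sigma)\cup(\rho+\sigma)$ in which every part has even multiplicity, the definition of $\Phi^{B,2}$ flips the parities of exactly the multiplicities of $\rho_{\ell(\sigma)+1}$ and $\rho_{\ell(\sigma)+1}+1$ (with parts of size $0$ ignored), so the only way to remove a box from $\Phi^{B,2}(\rho;\sigma)$ and restore even multiplicities everywhere is to decrement a part of size $\rho_{\ell(\sigma)+1}+1$. This forces $j_0=\rho_{\ell(\sigma)+1}+1$ and the Jordan type of $x$ to equal $(\rho+\sigma)\cup(\rho+\sigma)$, as needed.
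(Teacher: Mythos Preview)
Your proof is correct and follows essentially the same approach as the paper's: both compute $y^j(w)=x^j(w)+\langle v,x^{j-1}w\rangle e_0$, compare $\ker(y^j)$ with $\F e_0\oplus\ker(x^j)$, and identify the column of the single added box with the minimum in \eqref{mineqn}. The paper is slightly terser, invoking directly the standard fact that the Jordan type of $y$ is that of $x$ plus one box (since $x$ is the endomorphism induced by $y$ on $\tV/\F e_0$) instead of proving monotonicity of your $a_j$, but the substance is the same.
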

\begin{proof}
Set $y=\tPsi(v,x)$.
Since $x$ is the endomorphism induced by $y$
on $\tV/\F e_0\cong V$, the Jordan type of $x$ is obtained from that of $y$ by removing
a single box, which must be the unique box whose removal leaves a partition in
$\cP_{2n}^\dup$. So $x$ has Jordan type $(\rho+\sigma)\cup(\rho+\sigma)$
if and only if $y$ has Jordan type obtained from this by adding a box.
We assume this henceforth; what remains is to prove that the added box is
in column $\rho_{\ell(\sigma)+1}+1$ if and only if \eqref{mineqn} holds.

The column number of the added box can be expressed as
\[ \min\{j\geq 1\,|\,\dim\ker(y^j)=1+\dim\ker(x^j)\}. \]
Now for any $a\in\F$ and $u\in V$, we have by definition
\begin{equation} \label{yjeqn}
y^j(ae_0+u)=\langle v,x^{j-1}u\rangle e_0+x^j u,\text{ for all }j\geq 1.
\end{equation}
Hence $\ker(y^j)\subseteq \F e_0\oplus\ker(x^j)$, with equality if and only if 
$\langle v,x^{j-1}(\ker(x^j))\rangle=0$. 
So
\[
\min\{j\geq 1\,|\,\dim\ker(y^j)=1+\dim\ker(x^j)\}=
\min\{j\geq 1\,|\,\langle v,x^{j-1}(\ker(x^j))\rangle=0\},
\]
and the proof is finished.
\end{proof}

\begin{rmk}
It is easy to see that the explicit representative
$(v_{\rho;\sigma},x_{\rho;\sigma})\in\Oo_{\rho;\sigma}$ defined in the proof of
\cite[Theorem 6.1]{ah}
satisfies the conditions in Proposition \ref{prop:jordan},
so $\tPsi(v_{\rho;\sigma},x_{\rho;\sigma})$ has Jordan type
$\Phi^{B,2}(\rho;\sigma)$. In fact, one can check that for $(\mu;\nu)\in\cQ_n^{B,2}$,
a representative of the orbit $\cO_{\mu;\nu}^{B,2}$ is
$\tPsi(v_{\mu;\nu},x_{\mu;\nu})$. 
\end{rmk}

\begin{prop} \label{prop:affinebundle}
Let $(\mu;\nu)\in\cQ_n^{B,2}$.
\begin{enumerate}
\item
$\tPsi^{-1}(\cE_{\mu;\nu})$ consists of 
those $(v,x)\in\fN$ which satisfy the following conditions:
\begin{enumerate}
\item $x$ has Jordan type $(\mu+\nu)\cup(\mu+\nu)$.
\item For all $i$ and all $u\in\ker(x^{\mu_i+\nu_i})$, we have
$\langle v,x^{\mu_i}u\rangle=Q(x^{\mu_i+1}u)^{1/2}$.
\end{enumerate}
\item
The projection $\tPsi^{-1}(\cE_{\mu;\nu})
\to\cO_{\varnothing;\mu+\nu}^{C,2}:(v,x)\mapsto x$
is a fibre bundle in the Zariski topology, where the fibres are homeomorphic
to $\F^{2|\mu|}$.
\end{enumerate}
\end{prop}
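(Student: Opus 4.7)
The strategy parallels Proposition \ref{prop:vectorbundle}, but now translates Hesselink's characterisation of $\cO^{B,2}_{\rho;\sigma}$ (Theorem \ref{thm:spalt-B}) across the isomorphism $\tPsi$ of Proposition \ref{prop:nonequivisom}. The computational backbone is the identity
\[
y^j(ae_0+u) \;=\; \langle v, x^{j-1}u\rangle\, e_0 + x^j u \qquad (j \geq 1),
\]
for $y = \tPsi(v,x)$, which is a one-line induction from $ye_0=0$ and $yu = \langle v,u\rangle e_0 + xu$. Combined with $\tQ(ae_0+u') = a^2 + Q(u')$, this yields $\tQ(y^j(ae_0+u)) = \langle v, x^{j-1}u\rangle^2 + Q(x^j u)$, so the vanishing $\tQ(y^j w) = 0$ translates precisely to the affine relation $\langle v, x^{j-1}u\rangle = Q(x^j u)^{1/2}$ appearing in (b).

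For Part (1), suppose $\tPsi(v,x) \in \cO^{B,2}_{\rho;\sigma}$ with $(\rho;\sigma) \preceq (\mu;\nu)$. Proposition \ref{prop:jordan} immediately gives (a). For (b), the combinatorial definition of $\tchi_{\rho;\sigma}$ combined with $\rho_k \leq \mu_k$ gives $\tchi_{\rho;\sigma}(\mu_k+\nu_k) \leq \mu_k + 1$, so Theorem \ref{thm:spalt-B}(1) forces $\tQ(y^{\mu_k+1}w) = 0$ for every $w \in \ker(y^{\mu_k+\nu_k})$. Unpacking this via the key identity, and noting that $\ker(y^m) \supseteq \F e_0 \oplus \{u \in \ker(x^m) : \langle v, x^{m-1}u\rangle = 0\}$, yields (b) on a subspace of $\ker(x^{\mu_k+\nu_k})$ of codimension at most one; this is then extended to all of $\ker(x^{\mu_k+\nu_k})$ using the $\F$-linearity observation of the next paragraph together with descending induction on $k$. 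The converse direction reverses this reasoning: (a) and (b) imply the Hesselink-minimum bounds required to place $y$ in some $\cO^{B,2}_{\rho;\sigma}$ with $(\rho;\sigma) \preceq (\mu;\nu)$.

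For Part (2), fix $x \in \cO^{C,2}_{\varnothing;\mu+\nu}$ and view (b) as a system of equations on $v$. The crucial observation is that $u \mapsto Q(x^{\mu_i+1}u)^{1/2}$ is genuinely $\F$-linear on $\ker(x^{\mu_i+\nu_i})$: additivity reduces to $\langle x^{\mu_i+1}u, x^{\mu_i+1}u'\rangle = 0$, which by the symplectic-adjoint identity $\langle xa,b\rangle = \langle a,xb\rangle$ valid for $x \in \fo(V) \subset \fsp(V)$ equals $\langle x^{2\mu_i+2}u, u'\rangle$, and this vanishes since $2\mu_i+2 \geq \mu_i+\nu_i$ by the $\cQ_n^{B,2}$-condition $\mu_i \geq \nu_i-2$. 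One further checks that the various functionals defined by (b) on the subspaces $x^{\mu_i}(\ker x^{\mu_i+\nu_i})$ agree on overlaps, so assemble into a single linear functional on $\sum_i x^{\mu_i}(\ker x^{\mu_i+\nu_i})$; by non-degeneracy of $\langle\cdot,\cdot\rangle$ this functional is induced by some $v_0 \in V$, so (b) is consistent, and its solution set is an affine translate of the rank-$2|\mu|$ linear space from Proposition \ref{prop:vectorbundle}(2), hence homeomorphic to $\F^{2|\mu|}$. Zariski-local triviality is then obtained by combining the vector bundle trivialisations of $\Ee_{\mu;\nu}$ with a Zariski-local section of $\tPsi^{-1}(\cE_{\mu;\nu}) \to \cO^{C,2}_{\varnothing;\mu+\nu}$ produced by solving (b) polynomially over an affine open in $x$.

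The main obstacle is the extension step in Part (1): the Hesselink condition only directly constrains those $u \in \ker(x^{\mu_k+\nu_k})$ satisfying the auxiliary vanishing $\langle v, x^{\mu_k+\nu_k-1}u\rangle = 0$, and one must propagate the conclusion to all of $\ker(x^{\mu_k+\nu_k})$ using the $\F$-linearity established in Part (2) together with an inductive argument on $k$. The construction of polynomial sections underlying Zariski-local triviality in Part (2) is essentially formal once consistency of (b) is established.
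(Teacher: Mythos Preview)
Your computational backbone is correct and matches the paper's, but the ``extension step'' you flag as the main obstacle is both unnecessary and, as stated, not clearly valid. The paper avoids it entirely by showing that for $i\leq\ell(\nu)$ one actually has the \emph{equality}
\[
\ker(y^{\mu_i+\nu_i}) \;=\; \F e_0 \oplus \ker(x^{\mu_i+\nu_i}),
\]
not merely the containment you wrote down. This equality follows from the analysis in Proposition~\ref{prop:jordan}: one has $\ker(y^m)=\F e_0\oplus\ker(x^m)$ whenever $m\geq\rho_{\ell(\sigma)+1}+1$ (since $x^{m-1}(\ker x^m)\subseteq x^{j_0-1}(\ker x^{j_0})$ for $m\geq j_0$), and the chain of inequalities $\mu_i+\nu_i\geq\mu_{\ell(\nu)+1}+1\geq\rho_{\ell(\sigma)+1}+1$ (using $\nu_i\geq 1$, $\rho_j\leq\mu_j$, and $\ell(\sigma)\geq\ell(\nu)$) gives exactly this. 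With the equality in hand, every $u\in\ker(x^{\mu_i+\nu_i})$ already lies in $\ker(y^{\mu_i+\nu_i})$, so $\tQ(y^{\mu_i+1}u)=0$ yields condition~(b) immediately---no codimension-one subspace, no induction. Your proposed descending induction on $k$ does not obviously close the gap: knowing (b)$_{k+1}$ on $\ker(x^{\mu_{k+1}+\nu_{k+1}})$ gives no direct information about a vector $u_0\in\ker(x^{\mu_k+\nu_k})$ lying outside the hyperplane $H_k$, since the kernels nest the wrong way and the exponents differ.

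For Part~(2), your treatment is more explicit than the paper's and correct; in particular your observation that $u\mapsto Q(x^{\mu_i+1}u)^{1/2}$ is $\F$-linear on $\ker(x^{\mu_i+\nu_i})$ precisely because of the defining inequality $\mu_i\geq\nu_i-2$ of $\cQ_n^{B,2}$ is a nice point. The paper takes a shortcut: once Part~(1) identifies $\tPsi^{-1}(\cE_{\mu;\nu})$ with the solution set of~(b), nonemptiness of each fibre (which follows from $\cE_{\mu;\nu}\neq\varnothing$ and $Sp(V)$-equivariance) forces the fibre to be an affine translate of the linear space from Proposition~\ref{prop:vectorbundle}, with no separate consistency check required.
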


\begin{proof}
Let $(v,x)\in\fN$, and set $y=\tPsi(v,x)$. Let $(\rho;\sigma)\in\cQ_n^{B,2}$ be
the bipartition such that $y\in\cO_{\rho;\sigma}^{B,2}$. To prove (1),
we must show that $(v,x)$ satisfies conditions (a) and (b) if and only if
$(\rho;\sigma)\preceq(\mu;\nu)$. But by
Proposition \ref{prop:jordan}, $x$ has Jordan type $(\rho+\sigma)\cup(\rho+\sigma)$,
so condition (a) is equivalent to $\rho+\sigma=\mu+\nu$. We assume this henceforth;
what remains is to show that condition (b) is equivalent to the statement that
$\rho_i\leq\mu_i$ for all $i$.

We first suppose that $\rho_i\leq\mu_i$ for all $i$ and deduce condition (b).
Note that $\ell(\sigma)\geq\ell(\nu)$.
Since condition (b) is trivially true for $i>\ell(\nu)$, we assume that
$i\leq\ell(\nu)$. By the definition of $\cO_{\rho;\sigma}^{B,2}$, we have
$\tQ(y^{\rho_i+1}w)=0$ for all $w\in\ker(y^{\mu_i+\nu_i})$.
By the basic theory of form modules, this equation implies the equation
$\tQ(y^s w)=0$ for any $s\geq\rho_i+1$, in particular for $s=\mu_i+1$.
From the proof of
Proposition \ref{prop:jordan} and the fact that $\mu_i+\nu_i\geq
\mu_{\ell(\nu)+1}+1\geq\rho_{\ell(\sigma)+1}+1$, we see that 
$\ker(y^{\mu_i+\nu_i})=\F e_0\oplus\ker(x^{\mu_i+\nu_i})$. So
for all $u\in\ker(x^{\mu_i+\nu_i})$, we have
$\tQ(y^{\mu_i+1}u)=0$; using \eqref{yjeqn} and the definition of $\tQ$, this becomes
the desired equation $\langle v,x^{\mu_i}u\rangle=Q(x^{\mu_i+1}u)^{1/2}$.

For the converse, we suppose that $\rho_i\geq\mu_i+1$ for some $i$, and show that
condition (b) fails for this particular $i$, which obviously satisfies $i\leq\ell(\nu)$.
By the definition of $\cO_{\rho;\sigma}^{B,2}$, we have
$\tQ(y^{\rho_i}w)\neq 0$, and therefore also $\tQ(y^{\mu_i+1}w)\neq 0$,
for some $w\in\ker(y^{\mu_i+\nu_i})$. Using \eqref{yjeqn} again, we see that
this implies that $\langle v,x^{\mu_i}u\rangle\neq Q(x^{\mu_i+1}u)^{1/2}$ for some
$u\in\ker(x^{\mu_i+\nu_i})$. Part (1) is now proved.

It follows from the description in part (1) that $\tPsi^{-1}(\cE_{\mu;\nu})
\to\cO_{\varnothing;\mu+\nu}^{C,2}$ is a fibre bundle in the Zariski topology,
although it cannot be locally trivialized in the category of varieties, because of
the non-regular functions $x\mapsto Q(x^{\mu_i+1}u)^{1/2}$ involved. The fibre over
$x$ is an affine subspace of $V$ based on the vector subspace perpendicular to
$\sum_i x^{\mu_i}(\ker(x^{\mu_i+\nu_i}))$. As seen in Proposition 
\ref{prop:vectorbundle}, this vector space has dimension $2|\mu|$.
\end{proof}

\begin{rmk} \label{rmk:flatfamily}
If we replaced $\tPsi^{-1}(\cE_{\mu;\nu})$ with $\tPsi_t^{-1}(\cE_{\mu;\nu})$ where
$\tPsi_t$ is as in Remark \ref{rmk:deformation}, the equation in Proposition
\ref{prop:affinebundle}(1)(b) would become $\langle v,x^{\mu_i}u\rangle=
t\,Q(x^{\mu_i+1}u)^{1/2}$. Sending $t$ to $0$, we get the variety
$\Ee_{\mu;\nu}$ described in Proposition \ref{prop:vectorbundle}.
So we have a flat family whose zero fibre is $\Ee_{\mu;\nu}$ and whose general fibre
is isomorphic to $\cE_{\mu;\nu}$.
\end{rmk}

It is clear from the descriptions of the orbits $\cO_{\mu;\nu}^{B,2}$ that they are 
defined over $\F_q$ for any $q=2^s$, as are the isomorphism $\tPsi$ and the
bundle projections involved in Propositions \ref{prop:vectorbundle}
and \ref{prop:affinebundle}. Of course, the count of $\F_q$-points makes no
distinction between the true vector bundle of Proposition \ref{prop:vectorbundle}
and the `Zariski-topological vector bundle' of Proposition \ref{prop:affinebundle}. 
Recalling Proposition \ref{prop:poly-C2}, we deduce:

\begin{cor} \label{cor:bundle}
For any $(\mu;\nu)\in\cQ_n^{B,2}$ and any $q=2^s$,
\[
|\cE_{\mu;\nu}(\F_q)|=|\Ee_{\mu;\nu}(\F_q)|=
\sum_{\substack{(\rho;\sigma)\in\cQ_n\\(\rho;\sigma)\preceq(\mu;\nu)}}
P_{\rho;\sigma}(q).
\]
\end{cor}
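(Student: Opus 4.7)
The plan is to exploit the parallel bundle structures over the common base $\cO_{\varnothing;\mu+\nu}^{C,2}$ given by Propositions \ref{prop:vectorbundle}(2) and \ref{prop:affinebundle}(2): both $\Ee_{\mu;\nu}$ and $\tPsi^{-1}(\cE_{\mu;\nu})$ have fibres of the same dimension $2|\mu|$ over this base, so they have the same $\F_q$-point count, and $\Ee_{\mu;\nu}$ is by construction a disjoint union of orbits whose $\F_q$-point counts are governed by Proposition \ref{prop:poly-C2}.

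Concretely, I would first observe that $\tQ$, the isomorphism $\tPsi$ of Proposition \ref{prop:nonequivisom}, the orbits $\cO_{\mu;\nu}^{B,2}$ and $\Oo_{\rho;\sigma}$, and the two bundle projections are all defined over $\F_q$; since $\tPsi$ is a bijective morphism defined over $\F_q$, it identifies $\F_q$-points and gives $|\cE_{\mu;\nu}(\F_q)|=|\tPsi^{-1}(\cE_{\mu;\nu})(\F_q)|$. For any $\F_q$-point $x\in\cO_{\varnothing;\mu+\nu}^{C,2}$, the fibre of $\Ee_{\mu;\nu}\to\cO_{\varnothing;\mu+\nu}^{C,2}$ over $x$ is an $\F_q$-vector space of dimension $2|\mu|$ by Proposition \ref{prop:vectorbundle}, while the fibre of $\tPsi^{-1}(\cE_{\mu;\nu})\to\cO_{\varnothing;\mu+\nu}^{C,2}$ over $x$ is, by Proposition \ref{prop:affinebundle}(1), the affine translate of that same vector space by any particular solution of the system $\langle v,x^{\mu_i}u\rangle=Q(x^{\mu_i+1}u)^{1/2}$. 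Because $\F_q$ is perfect (so the constants $Q(x^{\mu_i+1}u)^{1/2}$ lie in $\F_q$) and the underlying linear map has the same image over $\F$ and $\F_q$, this affine system has an $\F_q$-solution whenever it has a solution over $\F$; hence each such fibre contains exactly $q^{2|\mu|}$ $\F_q$-points. Summing over $\F_q$-points of the base gives
\[ |\cE_{\mu;\nu}(\F_q)|=|\Ee_{\mu;\nu}(\F_q)|=q^{2|\mu|}\,|\cO_{\varnothing;\mu+\nu}^{C,2}(\F_q)|. \]

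The second equality asserted in the corollary is then immediate from the defining expression $\Ee_{\mu;\nu}=\bigdisjunion_{(\rho;\sigma)\preceq(\mu;\nu)}\Oo_{\rho;\sigma}$ together with $|\Oo_{\rho;\sigma}(\F_q)|=P_{\rho;\sigma}(q)$ from Proposition \ref{prop:poly-C2}. The only point requiring some care is the fibre count for $\tPsi^{-1}(\cE_{\mu;\nu})$: since the structure involves the non-regular map $x\mapsto Q(x^{\mu_i+1}u)^{1/2}$, there is no algebraic local trivialization, so one cannot directly invoke a ``product formula''. However, this subtlety affects only the global geometry, not the $\F_q$-point count over each $\F_q$-rational base point, which is what the argument above uses.
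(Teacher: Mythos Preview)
Your proof is correct and follows essentially the same approach as the paper: both $\Ee_{\mu;\nu}$ and $\tPsi^{-1}(\cE_{\mu;\nu})$ are fibre bundles over the common base $\cO_{\varnothing;\mu+\nu}^{C,2}$ with fibres of dimension $2|\mu|$, so the $\F_q$-point counts agree, and the second equality comes from the orbit decomposition of $\Ee_{\mu;\nu}$ together with Proposition~\ref{prop:poly-C2}. Your argument is more detailed than the paper's (which simply asserts that ``the count of $\F_q$-points makes no distinction'' between the two bundles); in particular, your use of the perfectness of $\F_q$ to put the constants $Q(x^{\mu_i+1}u)^{1/2}$ in $\F_q$, and the observation that consistency of the affine system descends from $\F$ to $\F_q$, make explicit exactly what the paper leaves to the reader.
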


We now want to derive a formula for $|\cO_{\mu;\nu}^{B,2}(\F_q)|$. 
(In general, $\cO_{\mu;\nu}^{B,2}(\F_q)$
splits into a number of orbits for the finite group $O(\tV)(\F_q)$, which are
described in \cite{xue}; we do not consider these here.) We need one further
combinatorial result and definition.

\begin{prop} 
For $(\rho;\sigma)\in\cQ_n$, there is a unique $(\rho;\sigma)^\sim\in\cQ_n^{B,2}$
satisfying:
\begin{enumerate}
\item $(\rho;\sigma)\preceq(\rho;\sigma)^\sim$, and
\item $(\rho;\sigma)^\sim\leq(\tau;\upsilon)$
for any $(\tau;\upsilon)\in\cQ_n^{B,2}$ such that 
$(\rho;\sigma)\leq(\tau;\upsilon)$.
\end{enumerate}
\end{prop}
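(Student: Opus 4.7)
The plan is to mimic the constructions of $(\rho;\sigma)^C$ (Definition \ref{defn:supC}) and $(\rho;\sigma)^B$ (Definition \ref{defn:supB}), but working column-by-column without changing the column sums (as required by the condition $(\rho;\sigma) \preceq (\rho;\sigma)^\sim$). Explicitly, I would define $(\rho;\sigma)^\sim$ by the following recipe: for every index $i$ with $\rho_i < \sigma_i - 2$, replace the pair $\rho_i,\sigma_i$ with
\[
\rho_i^\sim = \lceil\tfrac{\rho_i+\sigma_i}{2}\rceil - 1,\qquad \sigma_i^\sim = \lfloor\tfrac{\rho_i+\sigma_i}{2}\rfloor + 1,
\]
and leave all other columns unchanged. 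Since $\cQ_n^{B,2}$ is defined by conditions that do not couple different columns (unlike $\cQ_n^B$ or $\cQ_n^C$), the construction is column-local: no notion of ``overlapping replacements'' needs to be invoked.

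The first step is to verify that $(\rho^\sim;\sigma^\sim)$ is a genuine bipartition, i.e.\ that both $\rho^\sim$ and $\sigma^\sim$ are weakly decreasing. This is the one place where the rows interact and is the main obstacle. I would argue by cases on adjacent columns: if column $i$ is modified but column $i-1$ is not, then $\rho_{i-1} \geq \sigma_{i-1} - 2 \geq \sigma_i - 2$, and a short calculation shows $\rho_{i-1} \geq \rho_i^\sim$; if both columns $i-1$ and $i$ are modified, then $\rho_{i-1}^\sim \geq \rho_i^\sim$ and $\sigma_{i-1}^\sim \geq \sigma_i^\sim$ follow because the operations $a \mapsto \lceil a/2 \rceil$ and $a \mapsto \lfloor a/2 \rfloor$ preserve order; and the symmetric remaining case is similar. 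The column-by-column nature of the $\cQ_n^{B,2}$ condition then makes it immediate that $(\rho^\sim;\sigma^\sim) \in \cQ_n^{B,2}$, and since $\rho_i^\sim + \sigma_i^\sim = \rho_i + \sigma_i$ and $\rho_i^\sim \geq \rho_i$ by construction, we have $(\rho;\sigma) \preceq (\rho^\sim;\sigma^\sim)$.

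For the minimality property (2), suppose $(\tau;\upsilon) \in \cQ_n^{B,2}$ satisfies $(\rho;\sigma) \leq (\tau;\upsilon)$; I want to show $(\rho^\sim;\sigma^\sim) \leq (\tau;\upsilon)$. Since $\rho_j^\sim + \sigma_j^\sim = \rho_j + \sigma_j$, the first type of dominance inequality is inherited from the assumption. For the second type, set $D_k = \sum_{j<k}\bigl[(\tau_j+\upsilon_j)-(\rho_j+\sigma_j)\bigr] \geq 0$; the full column-sum inequality at $k$ gives $\tau_k + \upsilon_k \geq (\rho_k+\sigma_k) - D_k$, and combining with $\tau_k \geq \upsilon_k - 2$ yields
\[
2(\tau_k + D_k) \;\geq\; \tau_k + \upsilon_k + D_k - 2 \;\geq\; \rho_k + \sigma_k - 2.
\]
Since $\tau_k + D_k$ is an integer, this forces $\tau_k + D_k \geq \lceil(\rho_k+\sigma_k)/2\rceil - 1 \geq \rho_k^\sim$, and the required inequality $\sum_{j<k}(\rho_j^\sim + \sigma_j^\sim) + \rho_k^\sim \leq \sum_{j<k}(\tau_j + \upsilon_j) + \tau_k$ follows.

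Uniqueness is then formal: if $(\mu;\nu)\in\cQ_n^{B,2}$ also satisfies (1) and (2), applying (2) for $(\rho^\sim;\sigma^\sim)$ with $(\tau;\upsilon)=(\mu;\nu)$ gives $(\rho^\sim;\sigma^\sim) \leq (\mu;\nu)$, and applying (2) for $(\mu;\nu)$ with $(\tau;\upsilon)=(\rho^\sim;\sigma^\sim)$ gives the reverse inequality. I expect the verification that the explicit construction preserves the partition property (the adjacent-column case analysis in step one) to be the main obstacle; everything else is bookkeeping with partial sums.
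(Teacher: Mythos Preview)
Your construction is exactly the one the paper uses, and your overall strategy matches the paper's (very terse) proof, which simply says ``Properties (1) and (2) are easily checked, and uniqueness follows formally.'' Your detailed verification is essentially correct, with one small slip: in the minimality argument you write $\tau_k + D_k \geq \lceil(\rho_k+\sigma_k)/2\rceil - 1 \geq \rho_k^\sim$, but the second inequality fails when column $k$ is \emph{not} modified and $\rho_k > \sigma_k$ (e.g.\ $\rho_k=5$, $\sigma_k=0$ gives $\lceil 5/2\rceil - 1 = 2 < 5 = \rho_k^\sim$). The fix is immediate: observe that $\rho_k^\sim = \max\bigl(\rho_k,\ \lceil(\rho_k+\sigma_k)/2\rceil - 1\bigr)$, and the bound $\tau_k + D_k \geq \rho_k$ comes directly from the second-type dominance inequality in the hypothesis $(\rho;\sigma)\leq(\tau;\upsilon)$, so both branches of the max are covered.
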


\begin{proof}
We construct $(\rho;\sigma)^\sim$ by replacing
$\rho_i,\sigma_i$ with $\lceil\frac{\rho_i+\sigma_i}{2}\rceil-1,
\lfloor\frac{\rho_i+\sigma_i}{2}\rfloor+1$ whenever $\rho_i<\sigma_i-2$.
Properties (1) and (2) are easily checked, and uniqueness follows formally.
\end{proof}

\begin{defn} \label{defn:pieces-B2}
For any $(\mu;\nu)\in\cQ_n^{B,2}$,
we define the associated \emph{type-$(B,2)$ piece} of the exotic nilpotent cone to be:
\[
\Tt_{\mu;\nu}^{B,2}=\bigdisjunion_{\substack{(\rho;\sigma)\in\cQ_n\\
(\rho;\sigma)^\sim=(\mu;\nu)}}
\Oo_{\rho;\sigma}
\;=\;\overline{\Oo_{\mu;\nu}}\,\setminus
\bigcup_{\substack{(\tau;\upsilon)\in\cQ_n^{B,2}\\(\tau;\upsilon)<(\mu;\nu)}}
\overline{\Oo_{\tau;\upsilon}}.
\]
\end{defn}

\begin{thm} \label{thm:poly-B2}
For any $(\mu;\nu)\in\cQ_n^{B,2}$ and any $q=2^s$,
\[
|\cO_{\mu;\nu}^{B,2}(\F_q)|=|\Tt_{\mu;\nu}^{B,2}(\F_q)|=
\sum_{\substack{(\rho;\sigma)\in\cQ_n\\
(\rho;\sigma)^\sim=(\mu;\nu)}}
P_{\rho;\sigma}(q).
\]
\end{thm}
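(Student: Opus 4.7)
The plan is to combine Corollary \ref{cor:bundle} with an induction on the poset $\cQ_n^{B,2}$. The second equality in the theorem, $|\Tt_{\mu;\nu}^{B,2}(\F_q)|=\sum_{(\rho;\sigma)^\sim=(\mu;\nu)}P_{\rho;\sigma}(q)$, is immediate from Definition \ref{defn:pieces-B2} together with Proposition \ref{prop:poly-C2}: the union is disjoint and each orbit $\Oo_{\rho;\sigma}$ is defined over $\F_q$ with $|\Oo_{\rho;\sigma}(\F_q)|=P_{\rho;\sigma}(q)$. The substantive content is therefore the first equality $|\cO_{\mu;\nu}^{B,2}(\F_q)|=|\Tt_{\mu;\nu}^{B,2}(\F_q)|$.

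The key combinatorial input I would establish first is that whenever $(\rho;\sigma)\in\cQ_n$ satisfies $(\rho;\sigma)\preceq(\mu;\nu)$ with $(\mu;\nu)\in\cQ_n^{B,2}$, the associated element $(\rho;\sigma)^\sim$ also satisfies $(\rho;\sigma)^\sim\preceq(\mu;\nu)$. Here property (2) of the defining proposition of $(\rho;\sigma)^\sim$ gives $(\rho;\sigma)^\sim\leq(\mu;\nu)$, while the chain $\rho^\sim+\sigma^\sim=\rho+\sigma=\mu+\nu$ coming from $(\rho;\sigma)\preceq(\rho;\sigma)^\sim$ and $(\rho;\sigma)\preceq(\mu;\nu)$ supplies the partition equality. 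As a consequence I obtain the refined decomposition
\[
\Ee_{\mu;\nu}=\bigdisjunion_{\substack{(\mu';\nu')\in\cQ_n^{B,2}\\(\mu';\nu')\preceq(\mu;\nu)}}\Tt_{\mu';\nu'}^{B,2},
\]
parallel to the tautological decomposition $\cE_{\mu;\nu}=\bigdisjunion_{(\mu';\nu')\in\cQ_n^{B,2},\,(\mu';\nu')\preceq(\mu;\nu)}\cO_{\mu';\nu'}^{B,2}$ from the definition of $\cE_{\mu;\nu}$.

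Applying Corollary \ref{cor:bundle} to equate $|\Ee_{\mu;\nu}(\F_q)|$ with $|\cE_{\mu;\nu}(\F_q)|$, the two decompositions give
\[
\sum_{\substack{(\mu';\nu')\in\cQ_n^{B,2}\\(\mu';\nu')\preceq(\mu;\nu)}}|\Tt_{\mu';\nu'}^{B,2}(\F_q)|
=\sum_{\substack{(\mu';\nu')\in\cQ_n^{B,2}\\(\mu';\nu')\preceq(\mu;\nu)}}|\cO_{\mu';\nu'}^{B,2}(\F_q)|.
\]
An induction on $(\mu;\nu)$ within the finite sub-poset $\{(\mu';\nu')\in\cQ_n^{B,2}:(\mu';\nu')\preceq(\mu;\nu)\}$ then isolates the top terms and yields the desired equality; the base case (where $(\mu;\nu)$ is $\preceq$-minimal, so both sums consist of a single term) is immediate. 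The main obstacle, and the only non-formal step, is the combinatorial claim about $(\rho;\sigma)^\sim$ above; given that, everything else is a formal consequence of the fibre-bundle structure of Proposition \ref{prop:affinebundle} packaged as Corollary \ref{cor:bundle}.
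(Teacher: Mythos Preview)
Your proposal is correct and follows essentially the same route as the paper: decompose both $\cE_{\mu;\nu}$ and $\Ee_{\mu;\nu}$ as disjoint unions indexed by $\{(\mu';\nu')\in\cQ_n^{B,2}:(\mu';\nu')\preceq(\mu;\nu)\}$, invoke Corollary~\ref{cor:bundle}, and conclude by poset induction. The paper states the decomposition of $\Ee_{\mu;\nu}$ as holding ``by definition'', whereas you spell out the needed combinatorial fact that $(\rho;\sigma)\preceq(\mu;\nu)$ forces $(\rho;\sigma)^\sim\preceq(\mu;\nu)$; this is indeed the only point requiring verification, and your argument for it is fine.
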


\begin{proof}
By definition, we have
\begin{equation}
\cE_{\mu;\nu}=\bigdisjunion_{\substack{(\rho;\sigma)\in\cQ_n^{B,2}\\
(\rho;\sigma)\preceq(\mu;\nu)}}\cO_{\rho;\sigma}^{B,2}
\quad\text{and}\quad
\Ee_{\mu;\nu}=\bigdisjunion_{\substack{(\rho;\sigma)\in\cQ_n^{B,2}\\
(\rho;\sigma)\preceq(\mu;\nu)}}\Tt_{\rho;\sigma}^{B,2}.
\end{equation}
So the result follows by poset induction from Corollary \ref{cor:bundle}.
\end{proof}

In particular, we get the usual dimension formula 
$\dim\cO_{\mu;\nu}^{B,2}=2(n^2-b(\mu;\nu))$.
Note that our argument has produced not only the equinumeracy result stated in
Theorem \ref{thm:poly-B2}, but also a 
relationship between the actual varieties
$\cO_{\mu;\nu}^{B,2}$ and $\Tt_{\mu;\nu}^{B,2}$: from Propositions \ref{prop:vectorbundle}
and \ref{prop:affinebundle}, it follows that they are both fibre bundles in the
Zariski topology over the
same base $\cO_{\varnothing;\mu+\nu}^{C,2}$, with isomorphic (smooth) fibres.

\begin{rmk} \label{rmk:relationship}
Arguing as in Remark \ref{rmk:flatfamily}, we have a flat family whose zero fibre is
$\Tt_{\mu;\nu}^{B,2}$ and whose general fibre is isomorphic to $\cO_{\mu;\nu}^{B,2}$.
So the relationship between orbits in the nilpotent cones of types $B_n$ and $C_n$ in
characteristic $2$ may be expressed as follows: each orbit $\cO_{\mu;\nu}^{B,2}$ in
$\cN(\fo(\tV))$
has a deformation $\Tt_{\mu;\nu}^{B,2}$ which is in turn in bijection (via $\Psi$) with
a union of orbits in $\cN(\fsp(V))$, namely
\[
\cT^{C;B,2}_{\mu;\nu} = \bigdisjunion_{\substack{(\rho;\sigma)\in\cQ_n\\
(\rho;\sigma)^\sim=(\mu;\nu)}}
\cO^{C,2}_{\rho;\sigma}
\;=\;\overline{\cO^{C,2}_{\mu;\nu}}\,\setminus
\bigcup_{\substack{(\tau;\upsilon)\in\cQ_n^{B,2}\\(\tau;\upsilon)<(\mu; 
\nu)}}
\overline{\cO^{C,2}_{\tau;\upsilon}}.
\]
It is clear that $((\rho;\sigma)^\sim)^C=(\rho;\sigma)^C$, so
each $\cT^{C;B,2}_{\mu;\nu}$ is a locally closed subvariety of the nilpotent piece
$\cT^C_{(\mu;\nu)^C}$ defined in Definition \ref{defn:nilppiece-C}.
\end{rmk}

We now define locally closed subvarieties
of $\cN(\fo(\tV))$ corresponding to the type-$B$ and special pieces
of the exotic nilpotent cone:
\begin{defn} \label{defn:nilppiece-B}
For any $(\mu;\nu)\in\cQ_n^B$ (in the former case) or $(\mu;\nu)\in\cQ_n^\circ$ 
(in the latter case), define
\[
\cT_{\mu;\nu}^B=\bigdisjunion_{\substack{(\rho;\sigma)\in\cQ_n^{B,2}\\
(\rho;\sigma)^B=(\mu;\nu)}} \cO_{\rho;\sigma}^{B,2}
\quad\text{and}\quad
\cS_{\mu;\nu}^B=\bigdisjunion_{\substack{(\rho;\sigma)\in\cQ_n^{B,2}\\
(\rho;\sigma)^\circ=(\mu;\nu)}} \cO_{\rho;\sigma}^{B,2}.
\]
\end{defn}

It is clear that $((\rho;\sigma)^\sim)^B=(\rho;\sigma)^B$, so 
Theorem \ref{thm:poly-B2} implies the following formula for the number of
$\F_q$-points of $\cT_{\mu;\nu}^{B}$:

\begin{cor} \label{cor:poly-B2}
For any $(\mu;\nu)\in\cQ_n^{B}$ and any $q=2^s$,
\[
|\cT_{\mu;\nu}^{B}(\F_q)|=
\sum_{\substack{(\rho;\sigma)\in\cQ_n\\
(\rho;\sigma)^B=(\mu;\nu)}}
P_{\rho;\sigma}(q).
\]
\end{cor}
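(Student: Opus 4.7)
The plan is to deduce Corollary \ref{cor:poly-B2} directly from Theorem \ref{thm:poly-B2} by reorganizing a double sum, using the combinatorial identity $((\rho;\sigma)^\sim)^B=(\rho;\sigma)^B$ asserted in the paragraph preceding the statement.

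First, I would unfold Definition \ref{defn:nilppiece-B}, which expresses $\cT_{\mu;\nu}^B$ as a disjoint union of orbits $\cO_{\mu';\nu'}^{B,2}$ indexed by those $(\mu';\nu')\in\cQ_n^{B,2}$ with $(\mu';\nu')^B=(\mu;\nu)$, to write
\[
|\cT_{\mu;\nu}^B(\F_q)| \;=\; \sum_{\substack{(\mu';\nu')\in\cQ_n^{B,2}\\ (\mu';\nu')^B=(\mu;\nu)}} |\cO_{\mu';\nu'}^{B,2}(\F_q)|.
\]
Substituting the formula of Theorem \ref{thm:poly-B2} for each term on the right produces the double sum
\[
\sum_{\substack{(\mu';\nu')\in\cQ_n^{B,2}\\ (\mu';\nu')^B=(\mu;\nu)}}\ \sum_{\substack{(\rho;\sigma)\in\cQ_n\\ (\rho;\sigma)^\sim=(\mu';\nu')}} P_{\rho;\sigma}(q).
\]

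Next I would collapse this double sum. For any $(\rho;\sigma)\in\cQ_n$, the bipartition $(\rho;\sigma)^\sim\in\cQ_n^{B,2}$ is uniquely determined, so the pair of conditions $(\mu';\nu')\in\cQ_n^{B,2}$ with $(\rho;\sigma)^\sim=(\mu';\nu')$ and $(\mu';\nu')^B=(\mu;\nu)$ is equivalent, after eliminating $(\mu';\nu')$, to the single condition $((\rho;\sigma)^\sim)^B=(\mu;\nu)$. Applying the identity $((\rho;\sigma)^\sim)^B=(\rho;\sigma)^B$ then rewrites this as $(\rho;\sigma)^B=(\mu;\nu)$, and the double sum reduces to the stated formula. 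This step is purely formal once the identity is in hand.

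The only nontrivial input is the identity $((\rho;\sigma)^\sim)^B=(\rho;\sigma)^B$ itself, which the authors declare clear. I would verify it from universal properties: $(\rho;\sigma)^\sim$ is the minimum of $\cQ_n^{B,2}$ lying above $(\rho;\sigma)$, while $(\rho;\sigma)^B$ is the minimum of $\cQ_n^B$ lying above $(\rho;\sigma)$. Since $\cQ_n^B\subseteq\cQ_n^{B,2}$ and $(\rho;\sigma)^B$ lies above $(\rho;\sigma)$, minimality forces $(\rho;\sigma)^\sim\leq(\rho;\sigma)^B$; monotonicity of the operation $(\cdot)^B$ on $\cQ_n$ (obtained by composing Propositions \ref{prop:phiB}(3) and \ref{prop:hphiB1}(2), together with the fact that $((\cdot)^B)^B=(\cdot)^B$ from Proposition \ref{prop:hphiB}) then yields $((\rho;\sigma)^\sim)^B\leq(\rho;\sigma)^B$. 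Conversely, $((\rho;\sigma)^\sim)^B\in\cQ_n^B$ lies above $(\rho;\sigma)^\sim\geq(\rho;\sigma)$, so by the universal property of $(\rho;\sigma)^B$ (Corollary \ref{cor:hphiB}(3)) we get $(\rho;\sigma)^B\leq((\rho;\sigma)^\sim)^B$. There is no real obstacle anywhere in the argument; the entire proof is combinatorial bookkeeping on top of the substantive work already done in Theorem \ref{thm:poly-B2}.
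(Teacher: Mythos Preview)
Your proposal is correct and follows essentially the same approach as the paper: the paper's entire proof is the sentence ``It is clear that $((\rho;\sigma)^\sim)^B=(\rho;\sigma)^B$, so Theorem \ref{thm:poly-B2} implies the following formula,'' which is exactly the double-sum collapse you carry out in detail. Your universal-property verification of the identity is a valid way to justify what the paper simply declares clear (the authors presumably have in mind the explicit constructions in Definition \ref{defn:supB} and the construction of $(\cdot)^\sim$, which visibly compose in the required way since the two kinds of replacements never overlap).
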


Unlike the situation in type $C$, the elements of $\cT_{\mu;\nu}^B$ do not
all have the same Jordan type. However, by \cite[4.3]{xue:note}, the piece
$\cT_{\mu;\nu}^B$, which we have defined using the combinatorial map $\Phi^B$, 
coincides with the nilpotent piece
defined by Lusztig and Xue in \cite[Appendix]{lusztig:unip} using
gradings of the Jacobson--Morozov kind. Hence we have:

\begin{prop}[Lusztig, Xue] \label{prop:bpc-count}
For any $(\mu;\nu)\in\cQ_n^B$ and any $q=2^s$,
\[ |\cT_{\mu;\nu}^B(\F_q)|=P_{\mu;\nu}^B(q). \]
\end{prop}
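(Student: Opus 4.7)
The plan is to argue exactly as for Proposition \ref{prop:nilppiece-C} (the type-$C$ analogue), reducing the count to Lusztig and Xue's general theory of nilpotent pieces in bad characteristic. First, I would invoke Xue's identification \cite[4.3]{xue:note}: the subvariety $\cT_{\mu;\nu}^B\subset\cN(\fo(\tV))$ defined combinatorially in Definition \ref{defn:nilppiece-B} via the map $\Phi^B$ coincides with the nilpotent piece attached to $(\mu;\nu)$ in \cite[Appendix]{lusztig:unip}, whose construction uses Jacobson--Morozov-type gradings adapted to characteristic $2$.

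Second, I would apply the statement at the end of \cite[A.6]{lusztig:unip}: for every such nilpotent piece there is a polynomial $\tilde{P}(t)\in\Z[t]$, independent of the characteristic, with $|\cT_{\mu;\nu}^B(\F_q)|=\tilde{P}(q)$ for every finite subfield $\F_q$ (in any characteristic in which the piece is defined). To identify $\tilde{P}$ with $P_{\mu;\nu}^B$, I would specialize to odd characteristic: there $\cQ_n^{B,2}=\cQ_n^B$ and, for $(\rho;\sigma)\in\cQ_n^B$, the condition $(\rho;\sigma)^B=(\mu;\nu)$ forces $(\rho;\sigma)=(\mu;\nu)$ (by Proposition \ref{prop:hphiB}(2)), so the union in Definition \ref{defn:nilppiece-B} collapses to the single orbit $\cO_{\mu;\nu}^B$ of Theorem \ref{thm:gerst-B}. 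Combining with Proposition \ref{prop:poly-B}, we obtain $\tilde{P}(t)=P_{\mu;\nu}^B(t)$, and evaluating at $q=2^s$ completes the proof.

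The only genuine content lies in Xue's identification of our combinatorial piece with Lusztig's grading-theoretic one; this is the main obstacle in the sense that it subsumes the hard work, and once accepted, the remaining manipulations are formal. A minor point worth spelling out is that in good characteristic the Lusztig--Xue piece really does recover the ordinary orthogonal orbit $\cO_{\mu;\nu}^B$, which follows from the fact that the Jacobson--Morozov grading in characteristic $\neq 2$ characterizes the orbit up to $SO(\tV)$-conjugacy. No new use is made of the explicit rational-function formula for $P_{\mu;\nu}^B(t)$ analogous to \eqref{eqn:explicit-C}.
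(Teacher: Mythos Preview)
Your approach is essentially identical to the paper's: the paper cites \cite[4.3]{xue:note} just before the proposition to identify $\cT_{\mu;\nu}^B$ with the Lusztig--Xue nilpotent piece, and then in the proof simply invokes the characteristic-independence statement at the end of \cite[A.6]{lusztig:unip}, exactly as you propose.

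One small correction: the assertion ``$\cQ_n^{B,2}=\cQ_n^B$'' in odd characteristic is false as stated, since these are purely combinatorial posets with $\cQ_n^B\subsetneq\cQ_n^{B,2}$ in general (e.g.\ $((1^2);\varnothing)\in\cQ_2^{B,2}\setminus\cQ_2^B$). What you mean, and what is correct, is that in characteristic $\neq 2$ the orbits in $\cN(\fo(\tV))$ are labelled by $\cQ_n^B$ rather than $\cQ_n^{B,2}$, and the Lusztig--Xue grading-theoretic piece reduces to the single orbit $\cO_{\mu;\nu}^B$; this suffices for the identification $\tilde{P}=P_{\mu;\nu}^B$ via Proposition~\ref{prop:poly-B}.
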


\begin{proof}
This is the type-$B$ case of the statement
at the end of \cite[A.6]{lusztig:unip},
that the number of $\F_q$-points in a nilpotent
piece is given by a polynomial which is independent of the
characteristic.
\end{proof}

Combining Corollary \ref{cor:poly-B2} and Proposition 
\ref{prop:bpc-count}, one deduces the
$t=2^s$ case, and hence the general case, of the polynomial identity which is
Theorem \ref{thm:main}(1).
\section{Smoothness and rational smoothness of pieces}

As mentioned before, the nilpotent pieces $\cT_{\mu;\nu}^C$ and
$\cT_{\mu;\nu}^B$ defined in Definitions \ref{defn:nilppiece-C} and \ref{defn:nilppiece-B}
are given an alternative characterization in \cite[Appendix]{lusztig:unip}, using
gradings of Jacobson--Morozov type. In this approach, each nilpotent
piece is seen to be isomorphic to 
$G \times^P \mathfrak{s}^\Delta$, where $G$ is the appropriate
classical group, $P$ is a parabolic subgroup, and $\mathfrak{s}^\Delta$ is an open subset
of a vector space; hence it is smooth.

We aim now to give an analogous proof of smoothness of the
pieces $\Tt_{\mu;\nu}^C$ and $\Tt_{\mu;\nu}^B$ in the exotic nilpotent cone. This
argument works in any characteristic, so 
let $(V,\langle\cdot,\cdot\rangle)$ be a $2n$-dimensional 
symplectic vector space over $\F$, whose
characteristic is now arbitrary. Recall that
\[
\begin{split}
S&=\{x\in\End(V)\,|\,\langle xw,w\rangle =0,\text{ for all }w\in V\},\\
\fN&=\{(v,x)\in V\times S\,|\,x\text{ nilpotent}\},
\end{split}
\]
and that the type-$C$ and type-$B$ pieces of $\fN$ were defined in 
Definition \ref{defn:pieces} (which applies equally well in characteristic $2$).

We study the type-$C$ pieces first.
\begin{defn} \label{defn:lambda-filt}
For any $\lambda\in\cP_{2n}^C$, a \emph{$\lambda$-filtration} of $V$ is a $\Z$-filtration
$(V_{\geq a})_{a\in\Z}$ of $V$, with $V_{\geq a}\subseteq V_{\geq a-1}$ for all $a$,
such that 
\begin{enumerate}
\item
$V_{\geq 1-a}=(V_{\geq a})^\perp$ for all $a$, and 
\item
$\dim V_{\geq a}=\sum_{i\geq 1}\max\{\lceil\frac{\lambda_i-a}{2}\rceil,0\}$, for all
$a\geq 1$.
\end{enumerate}
\end{defn}
The reason for the notation is that such a filtration 
would arise from a Jacobson--Morozov grading $(V_a)_{a\in\Z}$ associated to a
nilpotent element of $\fsp(V)$ of Jordan type $\lambda$, via the definition
$V_{\geq a}=\bigoplus_{b\geq a}V_b$.

Observe that condition (2) implies $V_{\geq a}=0$ for $a\geq\lambda_1$, and this together
with condition (1) implies
$V_{\geq a}=V$ for $a\leq 1-\lambda_1$, so a $\lambda$-filtration can equivalently
be expressed as a partial flag:
\[ 0=V_{\geq\lambda_1}\subseteq V_{\geq\lambda_1-1}\subseteq V_{\geq \lambda_1-2}
\subseteq\cdots\subseteq 
V_{\geq 3-\lambda_1}\subseteq V_{\geq 2-\lambda_1} \subseteq V_{\geq 1-\lambda_1}=V. \]
Moreover, the dimensions of $V_{\geq a}$ for all $a\leq 0$ are uniquely determined by
conditions (1) and (2), and the partial flag is isotropic by condition (1). 
So the set of all $\lambda$-filtrations can be identified
with the partial flag variety $Sp(V)/P$, where $P$ is the parabolic subgroup of $Sp(V)$
which stabilizes a particular $\lambda$-filtration.

\begin{defn} \label{defn:C-adapted}
For $(v,x)\in\fN$, we say that a $\lambda$-filtration $(V_{\geq a})$ is 
\emph{$C$-adapted} to $(v,x)$ if $v\in V_{\geq 1}$ and 
$x(V_{\geq a})\subseteq V_{\geq a+2}$ for all $a$.
\end{defn}

Our aim is to prove that if $(v,x)$ belongs to the type-$C$ piece 
$\Tt_{\hPhi^C(\lambda)}^C$,
then there is a unique $\lambda$-filtration which is $C$-adapted to $(v,x)$.
We will give a recursive construction of this filtration, 
inspired by \cite[2.10]{xue:note}. 
For any partition $\lambda$, we write $k(\lambda)$ for the
multiplicity of the largest part $\lambda_1$ of $\lambda$ (we interpret $k(\varnothing)$ 
as $\infty$).

\begin{prop} \label{prop:filt}
Let $\lambda\in\cP_{2n}^C$, and assume $\lambda_1\geq 2$. Let 
$\lambda'\in\cP_{2(n-k(\lambda))}^C$ be the partition obtained from $\lambda$ by
replacing all parts equal to $\lambda_1$ with $\lambda_1-2$,
and re-ordering parts if necessary so that these come after any parts equal to 
$\lambda_1-1$. 
\begin{enumerate}
\item 
If $(V_{\geq a})$ is a $\lambda$-filtration of $V$, then 
$\dim V_{\geq\lambda_1-1}=\dim V/V_{\geq 2-\lambda_1}=k(\lambda)$.
We can define a $\lambda'$-filtration $(V_{\geq a}')$ of the symplectic vector space
$V'=V_{\geq 2-\lambda_1}/V_{\geq\lambda_1-1}$ by the rule
\[
V_{\geq a}'=\begin{cases}
0,&\text{ if $a\geq\lambda_1$,}\\
V_{\geq a}/V_{\geq\lambda_1-1}, &\text{ if $2-\lambda_1\leq a\leq\lambda_1-1$,}\\
V',&\text{ if $a\leq 1-\lambda_1$.}
\end{cases}
\]
\item 
Conversely, suppose that $W$ is a $k(\lambda)$-dimensional isotropic subspace of $V$, and
we have a $\lambda'$-filtration $(V_{\geq a}')$ of $V'=W^\perp/W$. Then we can define
a $\lambda$-filtration $(V_{\geq a})$ of $V$ by the rule
\[
V_{\geq a}=\begin{cases}
0,&\text{ if $a\geq\lambda_1$,}\\
\varphi^{-1}(V_{\geq a}'), &\text{ if $2-\lambda_1\leq a\leq\lambda_1-1$,}\\
V,&\text{ if $a\leq 1-\lambda_1$.}
\end{cases}
\]
where $\varphi:W^\perp\to V'$ is the canonical projection.
\end{enumerate}
The constructions in \textup{(1)} and \textup{(2)} are inverse to each other, so specifying
a $\lambda$-filtration of $V$ is the same as specifying a 
$k(\lambda)$-dimensional isotropic subspace $W$ of $V$ and a $\lambda'$-filtration
of $W^\perp/W$.
\end{prop}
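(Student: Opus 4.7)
The plan is to reduce the proof to direct dimension counts from Definition \ref{defn:lambda-filt}(2), combined with the standard symplectic-reduction bijection between partial isotropic flags in $V$ and flags in a quotient $W^\perp/W$ by a fixed isotropic subspace $W$. Since the two constructions in (1) and (2) are manifestly inverse to each other, as each recovers the middle-range subspaces by inverse image under $\varphi$ and the extremal terms by the defining identities $V_{\geq\lambda_1}=0$ and $V_{\geq 1-\lambda_1}=V$, the real content is verifying that each construction lands in the claimed set.

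First I would compute $\dim V_{\geq\lambda_1-1}$ by setting $a=\lambda_1-1$ in Definition \ref{defn:lambda-filt}(2): each of the $k(\lambda)$ parts $\lambda_i=\lambda_1$ contributes $\lceil 1/2\rceil=1$, while each part $\lambda_i\leq\lambda_1-1$ contributes $\max\{\lceil(\lambda_i-\lambda_1+1)/2\rceil,0\}=0$, giving $\dim V_{\geq\lambda_1-1}=k(\lambda)$. Since $\lambda_1\geq 2$, the filtration descends from $V_{\geq 2-\lambda_1}$ down to $V_{\geq\lambda_1-1}$, so $V_{\geq\lambda_1-1}\subseteq V_{\geq 2-\lambda_1}=(V_{\geq\lambda_1-1})^\perp$, which means $V_{\geq\lambda_1-1}$ is isotropic, and $\dim V/V_{\geq 2-\lambda_1}=k(\lambda)$ follows from non-degeneracy. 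Thus $V'=V_{\geq 2-\lambda_1}/V_{\geq\lambda_1-1}$ inherits a non-degenerate alternating form and has dimension $2(n-k(\lambda))$.

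To check that the induced filtration $(V'_{\geq a})$ in part (1) is a $\lambda'$-filtration, I would compare the dimensions term by term. For $1\leq a\leq\lambda_1-1$, the parts of $\lambda'$ that differ from those of $\lambda$ are exactly the $k(\lambda)$ parts where $\lambda_1$ has been replaced by $\lambda_1-2$. A short case check, splitting on $a\leq\lambda_1-2$ (where $\lceil(\lambda_1-a)/2\rceil-\lceil(\lambda_1-2-a)/2\rceil=1$) versus $a=\lambda_1-1$ (where both contributions vanish), yields $\dim V'_{\geq a}=\dim V_{\geq a}-k(\lambda)$, matching Definition \ref{defn:lambda-filt}(2) applied to $\lambda'$. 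Perpendicularity in $V'$ transfers by the standard symplectic-reduction identity: for any $U$ with $V_{\geq\lambda_1-1}\subseteq U\subseteq V_{\geq 2-\lambda_1}$, the perpendicular of $U/V_{\geq\lambda_1-1}$ in $V'$ equals $U^\perp/V_{\geq\lambda_1-1}$, and applying this with $U=V_{\geq a}$ and $U^\perp=V_{\geq 1-a}$ gives the required $V'_{\geq 1-a}=(V'_{\geq a})^\perp$.

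Part (2) is the same verification in reverse. Given $W$ and $(V'_{\geq a})$, the pullbacks $V_{\geq a}=\varphi^{-1}(V'_{\geq a})$ in the middle range have dimension $\dim V'_{\geq a}+k(\lambda)$, which matches Definition \ref{defn:lambda-filt}(2) for $\lambda$ by the inverse of the case check above, and perpendicularity follows from the identity $\varphi^{-1}(U)^\perp=\varphi^{-1}(U^\perp)$ together with $W^{\perp\perp}=W^\perp$. The extremal values $V_{\geq\lambda_1}=0$ and $V_{\geq 1-\lambda_1}=V$ trivially satisfy condition (1) against each other, and against $V_{\geq 2-\lambda_1}=W^\perp$ and $V_{\geq\lambda_1-1}=W$ respectively. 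The main obstacle is nothing more than the dimension bookkeeping, which becomes transparent once one observes that $\lambda$ and $\lambda'$ differ only in the $k(\lambda)$ slots where the maximal part $\lambda_1$ drops to $\lambda_1-2$; no deeper input beyond Definition \ref{defn:lambda-filt} is required.
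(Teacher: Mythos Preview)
Your proposal is correct and follows exactly the approach the paper intends: the paper's own proof reads in full ``This is all easily checked using the definition of $\lambda$-filtration,'' and what you have written is precisely that check spelled out. One small slip: in your last paragraph you write $W^{\perp\perp}=W^\perp$, but you mean $W^{\perp\perp}=W$ (nondegeneracy), which is what makes $W\subseteq\varphi^{-1}(U)^\perp\subseteq W^\perp$ and hence the symplectic-reduction identity go through.
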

\begin{proof}
This is all easily checked using the definition of $\lambda$-filtration.
\end{proof}

\begin{prop} \label{prop:recursion}
Let $\lambda\in\cP_{2n}^C$ with $\lambda_1\geq 2$, and define
$\lambda'$ as in Proposition \ref{prop:filt}.
Take any
$(v,x)\in\Tt_{\hPhi^C(\lambda)}^C$, and set
\[ W_{(v,x)}^\lambda=\begin{cases}
\im(x^{\lambda_1-1}),&\text{ if $\lambda_1$ is odd,}\\
\F x^{\frac{\lambda_1}{2}-1}v+\im(x^{\lambda_1-1}),&\text{ if $\lambda_1$ is even.}
\end{cases} \]
\begin{enumerate}
\item $W_{(v,x)}^\lambda$ is $k(\lambda)$-dimensional, isotropic, and 
contained in $\ker(x)$.
\item If $(V_{\geq a})$ is a $\lambda$-filtration which is $C$-adapted to
$(v,x)$, then $V_{\geq\lambda_1-1}=W_{(v,x)}^\lambda$.
\item In the exotic nilpotent cone for the vector space
$(W_{(v,x)}^\lambda)^\perp/W_{(v,x)}^\lambda$, the induced pair 
$(v+W_{(v,x)}^\lambda,x|_{(W_{(v,x)}^\lambda)^\perp/W_{(v,x)}^\lambda})$ 
lies in
$\Tt_{\hPhi^C(\lambda')}^C$.
\end{enumerate}
\end{prop}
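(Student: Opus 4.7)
My plan is to extract everything from the explicit description of the orbit $\Oo_{\rho;\sigma}$ containing $(v,x)$, combined with the combinatorics of $\Phi^C$. By Corollary \ref{cor:hphiC}, the hypothesis $(v,x) \in \Tt^C_{\hPhi^C(\lambda)}$ means $\Phi^C(\rho;\sigma) = \lambda$, and the construction of $\Phi^C$ then forces the bounds $\rho_i + \sigma_i \leq \lambda_1$ and $\rho_i \leq \lambda_1 - 1$ for all $i$, sharpening to $\rho_i \leq \lambda_1/2$ when $\lambda_1$ is even. Together with the Jordan type $(\rho+\sigma) \cup (\rho+\sigma)$ of $x$ and the orbit description $v \in \sum_i x^{\sigma_i}(\ker(x^{\rho_i + \sigma_i}))$, these bounds yield $x^{\lambda_1} = 0$, $x^{\lambda_1 - 1} v = 0$, and, in the even case, $x^{\lambda_1/2} v = 0$.

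Part (1) follows from these observations. Containment in $\ker(x)$ is immediate. Isotropy reduces, using self-adjointness of $x$ (a consequence of $x \in S$) and the vanishing $\langle v, x^k v \rangle = 0$ for $k \geq 0$, to pairings of the form $\langle x^c w, w' \rangle$ with $c \geq \lambda_1$, hence zero. The dimension count is the subtle point and requires a case analysis of how $\lambda_1$ and its multiplicity $k(\lambda)$ arise in the $\Phi^C$ procedure: when $\lambda_1$ is odd one verifies directly that $\dim \im(x^{\lambda_1 - 1}) = k(\lambda)$ from the multiplicities in $\rho+\sigma$; when $\lambda_1$ is even, either the same holds, or $\dim \im(x^{\lambda_1 - 1}) = k(\lambda) - 1$ and the open-orbit condition on $v$ guarantees $x^{\lambda_1/2 - 1} v \notin \im(x^{\lambda_1 - 1})$.

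Part (2) is a short deduction: $C$-adaptedness iterated gives $\im(x^{\lambda_1-1}) \subseteq V_{\geq \lambda_1-1}$, and $v \in V_{\geq 1}$ gives $x^{\lambda_1/2 - 1} v \in V_{\geq \lambda_1 - 1}$ in the even case, so $W^\lambda_{(v,x)} \subseteq V_{\geq \lambda_1 - 1}$; both sides have dimension $k(\lambda)$ by part (1) and the $\lambda$-filtration axiom, forcing equality. For part (3), isotropy of $W^\lambda_{(v,x)}$ combined with $x^{\lambda_1 - 1} v = 0$ implies $v \in W^\perp$; together with $W \subseteq \ker(x)$ and self-adjointness, this makes $x$ stabilize $W^\perp$ and descend to $\bar x$ on $V' = W^\perp/W$, so $(\bar v, \bar x)$ lies in the exotic nilpotent cone for $V'$. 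To identify its piece, one shows $\bar x$ has Jordan type $(\rho'+\sigma') \cup (\rho'+\sigma')$ for a bipartition $(\rho';\sigma')$ with $\Phi^C(\rho';\sigma') = \lambda'$, the size reductions matching the transition $\lambda \mapsto \lambda'$ via the pairing that the symplectic form sets up between the bottom of each top block (killed by $W$) and its top (killed by $W^\perp$), and then that $\bar v$ lies in the open $Sp(V')^{\bar x}$-orbit of the corresponding subspace.

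The main obstacle is the combinatorial bookkeeping behind the dimension count in part (1) and the Jordan-type computation in part (3). Both require unpacking the $\Phi^C$ procedure into subcases determined by the parity of $\lambda_1$ and by whether a replacement occurs at the leading position of the interleaved sequence $(2\rho_1, 2\sigma_1, 2\rho_2, 2\sigma_2, \ldots)$; the argument of \cite{xue:note} for the symplectic nilpotent pieces serves as a useful template. Once the cases are organized, the open-orbit condition on $v$ supplies exactly the extra data needed to pin down both the dimension of $W^\lambda_{(v,x)}$ and the orbit label of the quotient pair $(\bar v, \bar x)$.
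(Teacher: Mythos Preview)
Your proposal is correct and takes essentially the same approach as the paper's proof: both extract the needed constraints on $(\rho;\sigma)$ from $\Phi^C(\rho;\sigma)=\lambda$ and the explicit orbit description, and then perform a case analysis, with the paper organizing its cases by $k(\lambda)\in\{1,\,2c,\,2c+1\}$ rather than by the parity of $\lambda_1$ and the presence of a leading replacement. The paper is somewhat more explicit in part~(3), writing down $(\rho';\sigma')$ directly in each case (invoking the rectification procedure of \cite[Lemma~2.4]{ah} in the $k(\lambda)=2c$ case) and then checking $\Phi^C(\rho';\sigma')=\lambda'$ by hand, but the underlying strategy is the same as yours.
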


\begin{proof}
Let $(\rho;\sigma)\in\cQ_n$ be such that $(v,x)\in\Oo_{\rho;\sigma}$;
by assumption, $\Phi^C(\rho;\sigma)=\lambda$. Once part (1) is proved,
it makes sense to speak of the exotic nilpotent cone for
$(W_{(v,x)}^\lambda)^\perp/W_{(v,x)}^\lambda$, and
$(v',x')=(v+W_{(v,x)}^\lambda,x|_{(W_{(v,x)}^\lambda)^\perp/W_{(v,x)}^\lambda})$ 
is a well-defined element of that cone, so
we can let $(\rho';\sigma')\in\cQ_{n-k(\lambda)}$ be such that
$(v',x')\in\Oo_{\rho';\sigma'}$.
Then part (3) is equivalent to the statement that $\Phi^C(\rho';\sigma')=\lambda'$.
We divide the proof into three cases depending on $k(\lambda)$.

\noindent
\textbf{Case 1:} $k(\lambda)=1$. 
By the definition of $\Phi^C$, we must have $\lambda_1=2m$ where $m=\rho_1>\sigma_1$.
Thus $(\rho+\sigma)_1\leq 2m-1$, which implies that
$\im(x^{2m-1})=0$, so $W_{(v,x)}^\lambda=\F x^{m-1}v$. Since $m=\rho_1=\dim\F[x]v$,
$x^{m-1}v$ is nonzero, so $W_{(v,x)}^\lambda$ is $1$-dimensional, and clearly
isotropic; moreover, $x^m v=0$, so $W_{(v,x)}^\lambda$ is contained in $\ker(x)$, 
proving (1).
If $(V_{\geq a})$ is a $\lambda$-filtration which is $C$-adapted to
$(v,x)$, then by definition we have $x^{m-1}v\in V_{\geq 2m-1}$
and $\dim V_{\geq 2m-1}=1$,
proving (2). For part (3), it is easy to see 
using explicit orbit representatives that
\begin{equation*}
\rho_i'=\begin{cases}
m-1&\text{ if $i\leq k(\rho)$,}\\
\rho_i&\text{ if $i\geq k(\rho)+1$,}
\end{cases}
\quad\text{and}\quad
\sigma_i'=\begin{cases}
\sigma_i+1&\text{ if $i\leq k(\rho)-1$,}\\
\sigma_i&\text{ if $i\geq k(\rho)$.}
\end{cases}
\end{equation*}
If $\sigma_1\leq m-2$, then 
$\rho_1'\geq\sigma_1'$, and the only difference between $\lambda$ and
$\Phi^C(\rho';\sigma')$ is that the latter begins with $2m-2$ rather than $2m$, so
$\Phi^C(\rho';\sigma')=\lambda'$ as required. If $\sigma_1=m-1$, then $\rho_i'<\sigma_i'$
for all $i\leq\ell$, where $\ell=\min\{k(\rho)-1,k(\sigma)\}$. 
So $\Phi^C(\rho';\sigma')$ begins
$(2m-1,2m-1,\cdots,2m-1,2m-1,2m-2,\cdots)$, with $2\ell$ copies of $2m-1$, where the
remaining parts are the same as those of $\lambda$; again, this is exactly $\lambda'$.

\noindent
\textbf{Case 2:} $k(\lambda)=2c$ for some $c\geq 1$. 
By the definition of $\Phi^C$, we must have
$\rho_1+\sigma_1=\lambda_1$, and either $k(\rho)=c$, $k(\sigma)\geq c$, and
$\rho_1\leq\sigma_1$, or
$k(\rho)>c$, $k(\sigma)=c$, and $\rho_1<\sigma_1$.
Hence $(\rho+\sigma)_1=\lambda_1$, and $k(\rho+\sigma)=c$. 
It follows that $\im(x^{\lambda_1-1})$ is
$2c$-dimensional and contained in $\ker(x)$; also, it is isotropic, since its
perpendicular subspace is $\ker(x^{\lambda_1-1})$ and $\lambda_1\geq 2$.
If $\lambda_1$ is even, then either $x^{\frac{\lambda_1}{2}-1}v=0$
(if $\rho_1<\sigma_1$) or $0\neq x^{\frac{\lambda_1}{2}-1}v\in\im(x^{\lambda_1-1})$
(if $\rho_1=\sigma_1=\frac{\lambda_1}{2}$, forcing $k(\rho)=c$); so
whether $\lambda_1$ is even or odd, $W_{(v,x)}^\lambda=\im(x^{\lambda_1-1})$ and 
part (1) is proved.
If $(V_{\geq a})$ is a $\lambda$-filtration which is $C$-adapted to
$(v,x)$, then by definition $\im(x^{\lambda_1-1})=x^{\lambda_1-1}(V_{\geq 1-\lambda_1})$
is contained in $V_{\geq \lambda_1-1}$, and they are both $2c$-dimensional, proving (2).
For part (3),
one sees from explicit orbit representatives that
$(\rho';\sigma')$ is the bipartition obtained from the `bi-composition'
\[
(\rho_1-1,\cdots,\rho_c-1,\rho_{c+1},\rho_{c+2},\cdots;
\sigma_1-1,\cdots,\sigma_c-1,\sigma_{c+1},\sigma_{c+2},\cdots)
\]
by applying the rectification procedure of \cite[Lemma 2.4]{ah}. Namely, if
$k(\rho)=k(\sigma)=c$ then no modification is required. If
$k(\rho)=c$ and $k(\sigma)>c$, then we must subtract
one from each of $\sigma_{c+1},\cdots,\sigma_{k(\sigma)}$ and add one
to each of $\rho_{c+1},\cdots,\rho_{k(\sigma)}$ (and then re-order parts of the
first composition as required, 
if it happens that $\rho_{c+1}+1=\rho_1$). If $k(\rho)>c$ and $k(\sigma)=c$,
then we must
restore the original parts $\rho_1,\cdots,\rho_c$ and replace $\sigma_1-1,\cdots,
\sigma_c-1$ by $\sigma_1-2,\cdots,\sigma_c-2$ (and then re-order parts of the second
composition as required,
if it happens that $\sigma_{c+1}=\sigma_c-1$).
A short calculation shows that in any of these cases,
$\Phi^C(\rho';\sigma')=\lambda'$.

\noindent
\textbf{Case 3:} $k(\lambda)=2c+1$
for some $c\geq 1$. By the definition of $\Phi^C$, we must have $\lambda_1=2m$
where $m=\rho_1=\sigma_1$, and $k(\rho)>k(\sigma)=c$.
So $(\rho+\sigma)_1=2m$, and $k(\rho+\sigma)=c$.
It follows that $\im(x^{2m-1})$ is
$2c$-dimensional, contained in $\ker(x)$, and isotropic; 
also $x^{m-1}v$ is contained in $\ker(x)$, and not contained in $\im(x^{2m-1})$,
so $W_{(v,x)}^\lambda=\F x^{m-1}v+\im(x^{2m-1})$ is indeed 
$(2c+1)$-dimensional and part (1)
is proved. 
If $(V_{\geq a})$ is a $\lambda$-filtration which is $C$-adapted to
$(v,x)$, then by definition $W_{(v,x)}^\lambda$
is contained in $V_{\geq \lambda_1-1}$, and they are both $(2c+1)$-dimensional, 
proving (2).
In this case,
the passage from $(\rho;\sigma)$ to $(\rho';\sigma')$ can be done in two steps: first
finding the bipartition labelling the type of
$(v+\F x^{m-1}v,x|_{(\F x^{m-1}v)^\perp/\F x^{m-1}v})$ as in Case 1, then using that
as the starting point for the procedure in Case 2. Again it is easy to
see that $\Phi^C(\rho';\sigma')=\lambda'$.
\end{proof}

\begin{thm} \label{thm:filt-C}
Let $\lambda\in\cP_{2n}^C$.
For any $(v,x)\in\Tt_{\hPhi^C(\lambda)}^C$, there is a unique $\lambda$-filtration
$(V_{\geq a})$ of $V$ which is $C$-adapted to $(v,x)$.
\end{thm}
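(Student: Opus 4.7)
The plan is to argue by induction on $\dim V$, or equivalently on $\lambda_1$. The base case $\lambda_1 \leq 1$ is essentially trivial: either $V = 0$, or $\lambda = (1^{2n})$, which forces $(v,x) = (0,0)$ (since $(\varnothing;(1^n))$ is the only bipartition with $\Phi^C(\rho;\sigma) = (1^{2n})$, and the corresponding orbit $\Oo_{\varnothing;(1^n)}$ is a single point), and the only $\lambda$-filtration is the trivial one with $V_{\geq a} = V$ for $a \leq 0$ and $V_{\geq a} = 0$ for $a \geq 1$, which is clearly $C$-adapted.

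For the inductive step with $\lambda_1 \geq 2$, let $W = W^\lambda_{(v,x)}$ be the subspace of Proposition \ref{prop:recursion}. Parts (1) and (3) of that proposition guarantee that $W$ is a $k(\lambda)$-dimensional isotropic subspace of $V$ contained in $\ker(x)$, and that the induced pair $(v',x')$ lies in $\Tt^C_{\hPhi^C(\lambda')}$ within the exotic nilpotent cone of the symplectic space $V' = W^\perp/W$. Applying the inductive hypothesis to $V'$ produces a unique $\lambda'$-filtration $(V'_{\geq a})$ of $V'$ that is $C$-adapted to $(v',x')$, and Proposition \ref{prop:filt}(2) lifts this to a $\lambda$-filtration $(V_{\geq a})$ of $V$ with $V_{\geq \lambda_1-1} = W$. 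One then verifies that $(V_{\geq a})$ is $C$-adapted to $(v,x)$: the condition $v \in V_{\geq 1}$ is inherited from $v' \in V'_{\geq 1}$ together with $V_{\geq 1} = \varphi^{-1}(V'_{\geq 1})$ (which is valid because $1 \leq \lambda_1 - 1$), and the condition $x(V_{\geq a}) \subseteq V_{\geq a+2}$ is inherited from $x'(V'_{\geq a}) \subseteq V'_{\geq a+2}$ in the interior of the range, with outer values of $a$ handled using $xW = 0$ together with the self-adjointness identity $\langle xa, b\rangle = \langle a, xb\rangle$ available for $x \in S$ in arbitrary characteristic.

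For uniqueness, Proposition \ref{prop:recursion}(2) forces $V_{\geq \lambda_1 - 1} = W$ for any $C$-adapted $\lambda$-filtration; Proposition \ref{prop:filt}(1) then extracts a $\lambda'$-filtration on $V'$ which is $C$-adapted to $(v',x')$, and the inductive uniqueness statement closes the argument. The main obstacle I foresee is the $C$-adaptedness verification at the top boundary $a = \lambda_1 - 2$: showing $x(V_{\geq \lambda_1 - 2}) \subseteq V_{\geq \lambda_1} = 0$, i.e., $V_{\geq \lambda_1 - 2} \subseteq \ker(x)$, does not follow purely from $x'(V'_{\geq \lambda_1 - 2}) \subseteq V'_{\geq \lambda_1} = 0$ (which would only give $x(V_{\geq \lambda_1 - 2}) \subseteq W$), and instead requires a case analysis on the parity of $\lambda_1$ together with the explicit description of $W'$ provided by Proposition \ref{prop:recursion} to see that $\varphi^{-1}(V'_{\geq \lambda_1 - 2})$ is in fact annihilated by $x$; once this boundary verification is handled, the remainder of the argument is a routine recursive unpacking of Propositions \ref{prop:filt} and \ref{prop:recursion}.
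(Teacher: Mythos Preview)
Your proposal is correct and follows essentially the same route as the paper: induction via Propositions \ref{prop:filt} and \ref{prop:recursion}, with the boundary verification at $a=\lambda_1-2$ singled out as the nontrivial step and handled by a parity case analysis using the explicit form of $W_{(v',x')}^{\lambda'}$. The paper's proof additionally splits off the sub-case where $\lambda_1-1$ is not a part of $\lambda$ (so that $V_{\geq\lambda_1-2}=V_{\geq\lambda_1-1}$ and nothing further is needed) and the sub-case $\lambda_1=2$, but your outline already anticipates exactly this kind of case division.
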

\begin{proof}
The proof is by induction on $n$: the $n=0$ case is vacuously true, so assume that
$n\geq 1$ and that the result is known for smaller values of $n$.
If $\lambda=(1^{2n})$, then 
$v=0$, $x=0$, and a $\lambda$-filtration involves no non-trivial subspaces, 
so the result is true. So we can assume that $\lambda_1\geq 2$.

By Proposition \ref{prop:recursion}, we have a $k(\lambda)$-dimensional
isotropic subspace $W_{(v,x)}^\lambda$ which is contained in $\ker(x)$,
and the induced pair $(v',x')=(v+W_{(v,x)}^\lambda,
x|_{(W_{(v,x)}^\lambda)^\perp/W_{(v,x)}^\lambda})$ lies in
$\Tt_{\hPhi^C(\lambda')}^C$. By the induction hypothesis applied to
$\lambda'$ and $(v',x')$, there exists a $\lambda'$-filtration $(V_{\geq a}')$ of 
$(W_{(v,x)}^\lambda)^\perp/W_{(v,x)}^\lambda$ which is $C$-adapted to 
$(v',x')$. Let
$(V_{\geq a})$ be the $\lambda$-filtration of $V$ obtained from
$W_{(v,x)}^\lambda$ and $(V_{\geq a}')$
by the procedure of
Proposition \ref{prop:filt}(2); we aim to show that this filtration is $C$-adapted
to $(v,x)$. 

From the fact that
$v+W_{(v,x)}^\lambda\in V_{\geq 1}'$ it follows that $v\in V_{\geq 1}$.
Trivially we have $x(V_{\geq a})\subseteq V_{\geq a+2}$ when $a\geq\lambda_1$ or
$a\leq -\lambda_1-1$.
By definition,
$V_{\geq \lambda_1-1}=W_{(v,x)}^\lambda$ and $V_{\geq 2-\lambda_1}=
(W_{(v,x)}^\lambda)^\perp$, so
$x(V_{\geq \lambda_1-1})\subseteq 
V_{\geq \lambda_1+1}=0$ and
$x(V_{\geq -\lambda_1})\subseteq V_{\geq 2-\lambda_1}$.
From the fact that $x'(V_{\geq a}')\subseteq V_{\geq a+2}'$ for all $a$ it follows
that $x(V_{\geq a})\subseteq V_{\geq a+2}$ when $2-\lambda_1\leq a\leq\lambda_1-3$.
All that remains to prove is
that $x(V_{\geq \lambda_1-2})\subseteq V_{\geq \lambda_1}=0$ (which implies
$x(V_{\geq 1-\lambda_1})\subseteq V_{\geq 3-\lambda_1}$). If $\lambda_1-1$ is not
a part of $\lambda$, then $V_{\geq \lambda_1-2}=V_{\geq \lambda_1-1}$.
So assume henceforth that $\lambda_1-1$ is a part of $\lambda$, which means that
$\lambda_1'=\lambda_1-1$.

If $\lambda_1=2$, then 
$V_{\geq \lambda_1-2}=(W_{(v,x)}^\lambda)^\perp=(\F v)^\perp\cap\ker(x)$,
so $x(V_{\geq \lambda_1-2})=0$. If $\lambda_1\geq 3$, then
$V_{\geq \lambda_1-2}'=W_{(v',x')}^{\lambda'}$ 
by Proposition \ref{prop:recursion}(2), so
\[
V_{\geq \lambda_1-2}=\begin{cases}
W_{(v,x)}^\lambda+\F x^{\frac{\lambda_1-1}{2}-1} v+ 
x^{\lambda_1-2}((W_{(v,x)}^\lambda)^\perp),
&\text{ if $\lambda_1$ is odd,}\\
W_{(v,x)}^\lambda+ x^{\lambda_1-2}((W_{(v,x)}^\lambda)^\perp)
,&\text{ if $\lambda_1$ is even.}
\end{cases}
\]
Since $W_{(v,x)}^\lambda\supseteq \im(x^{\lambda_1-1})$ by definition, we have
$(W_{(v,x)}^\lambda)^\perp\subseteq \ker(x^{\lambda_1-1})$.
Also, if $\lambda_1$ is odd, then the bipartition $(\rho;\sigma)$ such that
$(v,x)\in\Oo_{\rho;\sigma}$ must satisfy
$\rho_1\leq\frac{\lambda_1-1}{2}$, so
$x^{\frac{\lambda_1-1}{2}} v=0$. Thus $x(V_{\geq \lambda_1-2})=0$ as required.

To prove the uniqueness, suppose that $(U_{\geq a})$ is any $\lambda$-filtration
of $V$ which is $C$-adapted to $(v,x)$. Then
$U_{\geq \lambda_1-1}=W_{(v,x)}^\lambda$ by Proposition \ref{prop:recursion}(2).
The $\lambda'$-filtration $(U_{\geq a}')$ of
$(W_{(v,x)}^\lambda)^\perp/W_{(v,x)}^\lambda$ to which $(U_{\geq a})$ corresponds under
Proposition \ref{prop:filt} is clearly $C$-adapted to $(v',x')$, and hence must equal
the filtration $(V_{\geq a}')$ used above, by the uniqueness part of the induction 
hypothesis. So $(U_{\geq a})$ equals $(V_{\geq a})$.
\end{proof}

\begin{rmk}
Suppose $\F$ has characteristic $2$, and define $y=\Psi(v,x)\in\cN(\fsp(V))$ 
as in Section 3. If $(V_{\geq a})$ is $C$-adapted to $(v,x)$, then
$y(V_{\geq a})\subseteq V_{\geq a+2}$ for all $a$. So in this case
the unique $\lambda$-filtration $(V_{\geq a})$ is the one attached to the element 
$y\in\cT_{\hPhi^C(\lambda)}^C$ in \cite[A.3]{lusztig:unip}, 
for which a non-recursive formula
is given in \cite[2.3]{lusztig:unip1}.
\end{rmk}

Now let $(\mu;\nu)\in\cQ_n^C$ and set $\lambda=\Phi^C(\mu;\nu)$.
Fix a $\lambda$-filtration $(V_{\geq a})$ of $V$
and let $P$ be its stabilizer in $Sp(V)$.
Define
\[ S_{\geq 2}=\{x\in S\,|\,x(V_{\geq a})\subseteq V_{\geq a+2}\text{ for all }a\}. \]
Then we have a $P$-submodule $V_{\geq 1}\oplus S_{\geq 2}$ of $V\oplus S$, and an
associated vector bundle $Sp(V)\times^P(V_{\geq 1}\oplus S_{\geq 2})$ over $Sp(V)/P$.
We can identify $Sp(V)\times^P(V_{\geq 1}\oplus S_{\geq 2})$ with the variety
of triples $((U_{\geq a}),v,x)$ where $(v,x)\in\fN$ and
$(U_{\geq a})$ is a $\lambda$-filtration of 
$V$ which is $C$-adapted to $(v,x)$. We have a
proper map $\pi:Sp(V)\times^P(V_{\geq 1}\oplus S_{\geq 2})\to\fN$ which
forgets the filtration $(U_{\geq a})$.

\begin{thm} \label{thm:res-C}
With notation as above, 
the map $\pi:Sp(V)\times^P(V_{\geq 1}\oplus S_{\geq 2})\to\fN$
has image $\overline{\Oo_{\mu;\nu}}$ and is a resolution of that variety.
This resolution restricts to an isomorphism $Sp(V)\times^P U\isomto \Tt_{\mu;\nu}^C$
where $U$ is some open subvariety of $V_{\geq 1}\oplus S_{\geq 2}$. In particular,
$\Tt_{\mu;\nu}^C$ is smooth.
\end{thm}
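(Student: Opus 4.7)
The source $E := Sp(V) \times^P (V_{\geq 1} \oplus S_{\geq 2})$ is a vector bundle over the smooth projective partial flag variety $Sp(V)/P$, hence is itself smooth and irreducible. Any $(v,x) \in V_{\geq 1} \oplus S_{\geq 2}$ lies in $\fN$, because $x \in S_{\geq 2}$ forces $x^{\lambda_1}(V) = x^{\lambda_1}(V_{\geq 1-\lambda_1}) \subseteq V_{\geq \lambda_1+1} = 0$. The map $\pi$ factors through a closed embedding $E \hookrightarrow (Sp(V)/P) \times \fN$, $[g,v,x] \mapsto (gP, (gv, gxg^{-1}))$, so $\pi$ is proper because $Sp(V)/P$ is projective. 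In particular, $\pi(E)$ is closed, irreducible and $Sp(V)$-stable, hence a single orbit closure $\overline{\Oo_{\rho;\sigma}}$.

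To identify this orbit with $\Oo_{\mu;\nu}$, I would use Theorem \ref{thm:filt-C}: every $(v,x) \in \Tt_{\mu;\nu}^C$ admits a unique $\lambda$-filtration $C$-adapted to it, and since $Sp(V)$ acts transitively on $\lambda$-filtrations (the set is homogeneous for $Sp(V)$), there is $g \in Sp(V)$ carrying the fixed filtration to the adapted one, whence $(g^{-1}v, g^{-1}xg) \in V_{\geq 1} \oplus S_{\geq 2}$ and $(v,x) = \pi([g, g^{-1}v, g^{-1}xg])$. This gives $\Tt_{\mu;\nu}^C \subseteq \pi(E)$ and therefore $\overline{\Oo_{\mu;\nu}} \subseteq \pi(E)$. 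For the reverse containment, I would show that every $(v,x) \in V_{\geq 1} \oplus S_{\geq 2}$ with $(v,x) \in \Oo_{\tau;\upsilon}$ satisfies $(\tau;\upsilon) \leq (\mu;\nu)$ by translating the conditions $v \in V_{\geq 1}$ and $x(V_{\geq a}) \subseteq V_{\geq a+2}$ into bounds on $\dim\ker(x^a)$ and on the position of $v$ modulo powers of $x$, then converting these into the interleaved dominance inequalities on $\cQ_n$.

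For the resolution and smoothness claims, set $U := \Tt_{\mu;\nu}^C \cap (V_{\geq 1} \oplus S_{\geq 2})$. Since $\Tt_{\mu;\nu}^C$ is open in $\overline{\Oo_{\mu;\nu}}$ and, by the preceding step, $V_{\geq 1} \oplus S_{\geq 2} \subseteq \overline{\Oo_{\mu;\nu}}$, the set $U$ is open in $V_{\geq 1} \oplus S_{\geq 2}$. The uniqueness assertion in Theorem \ref{thm:filt-C} says that $\pi$ restricts to a bijection $Sp(V) \times^P U \to \Tt_{\mu;\nu}^C$. To upgrade this to a scheme isomorphism, the recursion of Proposition \ref{prop:recursion} is crucial: $W_{(v,x)}^\lambda$ is visibly a regular function of $(v,x)$, and iterating the construction in the smaller symplectic quotient $(W_{(v,x)}^\lambda)^\perp / W_{(v,x)}^\lambda$ exhibits the whole adapted filtration as a regular function of $(v,x)$. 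Composing with any local regular section of $Sp(V) \to Sp(V)/P$ gives a regular inverse to $\pi|_{Sp(V)\times^P U}$. Since $\Oo_{\mu;\nu}$ is open dense in $\overline{\Oo_{\mu;\nu}}$ and is contained in $\Tt_{\mu;\nu}^C$, the map $\pi$ is birational and hence a resolution of $\overline{\Oo_{\mu;\nu}}$; smoothness of $\Tt_{\mu;\nu}^C \cong Sp(V) \times^P U$ is inherited from that of $E$.

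The main obstacle is the combinatorial bound $(\tau;\upsilon) \leq (\mu;\nu)$ in the reverse containment of step two, which demands a careful translation of $\lambda$-filtration constraints into the interleaved dominance order on $\cQ_n$, paralleling (in the exotic setting and without a Jacobson–Morozov grading in bad characteristic) the dimension-theoretic argument for nilpotent pieces in \cite[Appendix]{lusztig:unip}.
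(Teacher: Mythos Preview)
Your overall architecture matches the paper's: properness of $\pi$, smoothness and irreducibility of the source, the use of Theorem~\ref{thm:filt-C} to get $\Tt_{\mu;\nu}^C\subseteq\pi(E)$, the uniqueness part of that theorem to get bijectivity over $\Tt_{\mu;\nu}^C$, and the recursive formula $W_{(v,x)}^\lambda$ from Proposition~\ref{prop:recursion} to see that the inverse is a morphism. On all of those points you and the paper agree.

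The one substantive divergence is precisely the step you yourself flag as the main obstacle: the reverse containment $\pi(E)\subseteq\overline{\Oo_{\mu;\nu}}$. You propose to prove it by translating the filtration constraints $v\in V_{\geq 1}$, $x(V_{\geq a})\subseteq V_{\geq a+2}$ into the interleaved dominance inequality $(\tau;\upsilon)\leq(\mu;\nu)$ on $\cQ_n$. That is doable in principle, but the paper sidesteps it entirely with a dimension count. Since the source $E$ is a vector bundle over $Sp(V)/P$, its dimension is characteristic-independent; passing to characteristic~$0$, one compares $V_{\geq 1}\oplus S_{\geq 2}$ with $\fsp(V)_{\geq 2}$ via the explicit formulas \eqref{eqn:filtdim} coming from a Jacobson--Morozov grading, and reads off $\dim E=\dim Sp(V)\times^P\fsp(V)_{\geq 2}=\dim\overline{\cO_{\mu;\nu}^C}=2(n^2-b(\mu;\nu))$. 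Then $\pi(E)$ is closed, irreducible, of dimension at most $2(n^2-b(\mu;\nu))$, and contains $\Tt_{\mu;\nu}^C$; that forces $\pi(E)=\overline{\Oo_{\mu;\nu}}$ with no combinatorics on bipartitions at all. So the paper's argument is shorter exactly where yours is longest, at the cost of invoking the classical resolution $Sp(V)\times^P\fsp(V)_{\geq 2}\to\overline{\cO_{\mu;\nu}^C}$ in characteristic~$0$ as a black box.
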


\begin{proof}
We must first show that $Sp(V)\times^P(V_{\geq 1}\oplus S_{\geq 2})$ has the right
dimension, namely $2(n^2-b(\mu;\nu))$. The dimension is clearly independent
of characteristic, so we can assume temporarily that the characteristic is zero and that
$(V_{\geq a})$ arises from a Jacobson--Morozov grading $(V_a)$ associated to a
nilpotent $y\in\cN(\fsp(V))$ of Jordan type $\lambda$. Then $P$ is the canonical
parabolic attached to $y$, and we have a known resolution
$Sp(V)\times^P\fsp(V)_{\geq 2}\to\overline{\cO_{\mu;\nu}^C}$, where
\[
\fsp(V)_{\geq 2}=\{y\in\fsp(V)\,|\,y(V_{\geq a})\subseteq V_{\geq a+2}
\text{ for all }a\}.
\]
So it suffices to prove that $\dim \fsp(V)_{\geq 2}=\dim V_{\geq 1}+\dim S_{\geq 2}$,
which follows from the explicit formulas:
\begin{equation} \label{eqn:filtdim}
\begin{split}
\dim \fsp(V)_{\geq 2}&=\sum_{a\geq 1}\binom{\dim V_{a}+1}{2} +
\sum_{\substack{a\geq 2\\1-a\leq b\leq a-2}} (\dim V_a)(\dim V_b),\\
\dim S_{\geq 2}&=\sum_{a\geq 1}\binom{\dim V_{a}}{2} +
\sum_{\substack{a\geq 2\\1-a\leq b\leq a-2}} (\dim V_a)(\dim V_b).
\end{split}
\end{equation}

Since $\pi$ is proper and $Sp(V)\times^P(V_{\geq 1}\oplus S_{\geq 2})$ is irreducible,
the image of $\pi$ is an irreducible closed subvariety of $\fN$ of dimension
$\leq 2(n^2-b(\mu;\nu))$, which contains $\Tt_{\mu;\nu}^C$ by Theorem \ref{thm:filt-C},
and must therefore equal $\overline{\Oo_{\mu;\nu}}$. It is obvious that
$Sp(V)\times^P(V_{\geq 1}\oplus S_{\geq 2})$ is smooth. Its open subvariety
$\pi^{-1}(\Tt_{\mu;\nu}^C)$ is $Sp(V)$-stable, hence must be of the form
$Sp(V)\times^P U$ for some open subvariety $U$ of $V_{\geq 1}\oplus S_{\geq 2}$.
By the uniqueness in Theorem \ref{thm:filt-C}, the restriction
$\pi:Sp(V)\times^P U\to \Tt_{\mu;\nu}^C$ is bijective.

All that remains is to show that the inverse map $\Tt_{\mu;\nu}^C\to Sp(V)\times^P U$
is a morphism of varieties. This is equivalent to saying that the map which associates
to $(v,x)\in\Tt_{\mu;\nu}^C$ the unique $\lambda$-filtration 
specified in Theorem \ref{thm:filt-C}
is a morphism from $\Tt_{\mu;\nu}^C$ to $Sp(V)/P$. By the recursive construction used
to prove Theorem \ref{thm:filt-C}, this follows from the fact that the
map $(v,x)\mapsto W_{(v,x)}^\lambda$ 
is a morphism from $\Tt_{\mu;\nu}^C$ to the Grassmannian
of $k(\lambda)$-dimensional isotropic subspaces of $V$.
\end{proof}

\begin{rmk}
In the case $(\mu;\nu)=((n);\varnothing)$, a $\lambda$-filtration is a 
complete flag in $V$, and this resolution is nothing but Kato's
exotic Springer resolution of $\fN$ (see \cite{kato:exotic}).
\end{rmk}

The argument for the type-$B$ pieces is very similar, so we will concentrate on the points
which are different. For any $\lambda\in\cP_{2n+1}^B$, we define a
$\lambda$-filtration of $V$ by Definition \ref{defn:lambda-filt} again,
despite the fact that $|\lambda|=2n+1$ whereas $\dim V=2n$. Proposition
\ref{prop:filt} remains true for $\lambda\in\cP_{2n+1}^B$, except that now
$\lambda'\in\cP_{2(n-k(\lambda))+1}^B$. We must make a slight change to Definition
\ref{defn:C-adapted}:

\begin{defn} \label{defn:B-adapted}
For $(v,x)\in\fN$, we say that a $\lambda$-filtration $(V_{\geq a})$ is 
\emph{$B$-adapted} to $(v,x)$ if $v\in V_{\geq 2}$ and 
$x(V_{\geq a})\subseteq V_{\geq a+2}$ for all $a$.
\end{defn}

The analogue of Proposition \ref{prop:recursion} is as follows.

\begin{prop} \label{prop:recursion-B}
Let $\lambda\in\cP_{2n+1}^B$ with $\lambda_1\geq 2$, and define
$\lambda'$ as in Proposition \ref{prop:filt}.
Take any
$(v,x)\in\Tt_{\hPhi^B(\lambda)}^B$, and set
\[ W_{(v,x)}^\lambda=\begin{cases}
\im(x^{\lambda_1-1}),&\text{ if $\lambda_1$ is even,}\\
\F x^{\lfloor\frac{\lambda_1}{2}\rfloor-1}v+\im(x^{\lambda_1-1}),
&\text{ if $\lambda_1$ is odd.}
\end{cases} \]
\begin{enumerate}
\item $W_{(v,x)}^\lambda$ is $k(\lambda)$-dimensional, 
isotropic, and contained in $\ker(x)$.
\item If $(V_{\geq a})$ is a $\lambda$-filtration which is $B$-adapted to
$(v,x)$, then $V_{\geq\lambda_1-1}=W_{(v,x)}^\lambda$.
\item In the exotic nilpotent cone for the vector space
$(W_{(v,x)}^\lambda)^\perp/W_{(v,x)}^\lambda$, the induced pair 
$(v+W_{(v,x)}^\lambda,x|_{(W_{(v,x)}^\lambda)^\perp/W_{(v,x)}^\lambda})$ lies in
$\Tt_{\hPhi^B(\lambda')}^B$.
\end{enumerate}
\end{prop}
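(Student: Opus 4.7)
My plan is to mirror the proof of Proposition \ref{prop:recursion} step by step, replacing $\Phi^C$ by $\Phi^B$ throughout and accounting for the fact that $\Phi^B$ is built from the odd-entry sequence $(2\rho_1+1,2\sigma_1-1,2\rho_2+1,\ldots)$; this has the effect of swapping the roles of even and odd parities of $\lambda_1$ in the definition of $W_{(v,x)}^\lambda$. Let $(\rho;\sigma)\in\cQ_n$ satisfy $(v,x)\in\Oo_{\rho;\sigma}$, so that $\Phi^B(\rho;\sigma)=\lambda$, and once part~(1) is established let $(\rho';\sigma')$ label the orbit of the induced pair in the quotient exotic nilpotent cone; part~(3) then becomes the identity $\Phi^B(\rho';\sigma')=\lambda'$. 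The argument divides into three cases by $k(\lambda)$, as in the type-$C$ proof.

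In Case~1, $k(\lambda)=1$ forces $\sigma_1\leq\rho_1$ and $\lambda_1=2\rho_1+1$ odd with $(\rho+\sigma)_1\leq 2\rho_1$, so $\im(x^{2\rho_1})=0$, while $x^{\rho_1-1}v\neq 0$ and $x^{\rho_1}v=0$ by the structure of the orbit. Thus $W_{(v,x)}^\lambda=\F x^{\rho_1-1}v$ is one-dimensional, isotropic, and in $\ker(x)$. Any $B$-adapted filtration has $v\in V_{\geq 2}$, hence $x^{\rho_1-1}v\in V_{\geq 2\rho_1}=V_{\geq\lambda_1-1}$, of dimension one, giving~(2); part~(3) is a short calculation with the explicit orbit representatives of \cite[Theorem~6.1]{ah}, exactly as in type-$C$ Case~1.

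In Case~2, $k(\lambda)=2c$ with $c\geq 1$, so the top of $\lambda$ arises from one averaging step, giving $(\rho+\sigma)_1=\lambda_1$ with $k(\rho+\sigma)=c$, and $\im(x^{\lambda_1-1})$ is $2c$-dimensional, isotropic, and inside $\ker(x)$. If $\lambda_1$ is even then $W_{(v,x)}^\lambda=\im(x^{\lambda_1-1})$ by definition; if $\lambda_1$ is odd then, as in type-$C$ Case~2, the extra vector $x^{\lfloor\lambda_1/2\rfloor-1}v$ either vanishes or lies in $\im(x^{\lambda_1-1})$, so $W_{(v,x)}^\lambda=\im(x^{\lambda_1-1})$ again. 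Part~(2) is a dimension count, and part~(3) comes from the rectification procedure of \cite[Lemma~2.4]{ah}, sub-divided by whether $k(\rho)=c$ or $k(\sigma)=c$. In Case~3, $k(\lambda)=2c+1$ with $c\geq 1$ forces $\lambda_1=2\rho_1+1$ odd and $\rho_1=\cdots=\rho_{c+1}$, $\sigma_1=\cdots=\sigma_c=\rho_1+1>\sigma_{c+1}$; then $\im(x^{2\rho_1})$ is $2c$-dimensional, isotropic, contained in $\ker(x)$, while $x^{\rho_1-1}v\in\ker(x)\setminus\im(x^{2\rho_1})$, so $W_{(v,x)}^\lambda$ has dimension $2c+1$, and its isotropy follows exactly as in type-$C$ Case~3. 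The pair $(\rho';\sigma')$ is computed in two stages, first quotienting by $\F x^{\rho_1-1}v$ as in Case~1 and then applying Case~2, again yielding $\Phi^B(\rho';\sigma')=\lambda'$.

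The main obstacle is the combinatorial verification of $\Phi^B(\rho';\sigma')=\lambda'$ in the branches of Cases~2 and~3: several sub-cases on $k(\rho)$, $k(\sigma)$, and the first few parts of $\rho,\sigma$ must be treated separately, each requiring one to follow the rectification of \cite[Lemma~2.4]{ah} and then recompute $\Phi^B$ to confirm agreement with $\lambda'$. The remaining linear-algebra assertions --- dimension, isotropy, containment in $\ker(x)$, and uniqueness of $V_{\geq\lambda_1-1}$ --- follow the template of the type-$C$ argument with only the parity swap flagged above.
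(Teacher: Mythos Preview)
Your proposal is correct and follows essentially the same approach as the paper: both proofs mirror Proposition~\ref{prop:recursion}, set up the same three-case split on $k(\lambda)$, and make the identical parity swap (odd $\lambda_1$ in type $B$ playing the role of even $\lambda_1$ in type $C$). The paper's proof is slightly terser in Cases~2 and~3, but the constraints it records on $(\rho;\sigma)$ and the reductions to Case~1 and the rectification procedure of \cite[Lemma~2.4]{ah} are exactly those you outline.
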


\begin{proof}
The proof is much the same as that of Proposition \ref{prop:recursion}.
Let $(\rho;\sigma)\in\cQ_n$ be such that $(v,x)\in\Oo_{\rho;\sigma}$;
by assumption, $\Phi^B(\rho;\sigma)=\lambda$. 
Let $(\rho';\sigma')\in\cQ_{n-k(\lambda)}$ be such that
$(v',x')\in\Oo_{\rho';\sigma'}$, where
$(v',x')=(v+W_{(v,x)}^\lambda,x|_{(W_{(v,x)}^\lambda)^\perp/W_{(v,x)}^\lambda})$;
then part (3) is equivalent to the statement that $\Phi^B(\rho';\sigma')=\lambda'$.

\noindent
\textbf{Case 1:} $k(\lambda)=1$. By the definition of
$\Phi^B$, we must have $\lambda_1=2m+1$ where $m=\rho_1\geq\sigma_1$.
Thus $(\rho+\sigma)_1\leq 2m$, which implies that $\im(x^{2m})=0$, so
$W_{(v,x)}^\lambda=\F x^{m-1}v$, and part (1) follows for the same reason as
in Proposition \ref{prop:recursion}. If $(V_{\geq a})$ is a $\lambda$-filtration 
which is $B$-adapted to $(v,x)$, then by definition we have
$x^{m-1}v\in V_{\geq 2m}$ and $\dim V_{\geq 2m}=1$, proving (2). The description
of $(\rho';\sigma')$ is the same as in Case 1 of Proposition \ref{prop:recursion},
and the proof that $\Phi^B(\rho';\sigma')=\lambda'$ is analogous.

\noindent
\textbf{Case 2:} $k(\lambda)=2c$ for some $c\geq 1$. 
By the definition of $\Phi^B$, we must have
$\rho_1+\sigma_1=\lambda_1$, and either $k(\rho)=c$, $k(\sigma)\geq c$, and
$\rho_1<\sigma_1$, or
$k(\rho)>c$, $k(\sigma)=c$, and $\rho_1<\sigma_1-1$.
Hence $(\rho+\sigma)_1=\lambda_1$, and $k(\rho+\sigma)=c$. 
It follows that $\im(x^{\lambda_1-1})$ is
$2c$-dimensional and contained in $\ker(x)$; also, it is isotropic, since its
perpendicular subspace is $\ker(x^{\lambda_1-1})$ and $\lambda_1\geq 2$.
If $\lambda_1$ is odd, then either $x^{\lfloor\frac{\lambda_1}{2}\rfloor-1}v=0$
(if $\rho_1<\sigma_1-1$) or $0\neq x^{\lfloor\frac{\lambda_1}{2}\rfloor-1}v
\in\im(x^{\lambda_1-1})$
(if $\rho_1=\sigma_1-1=\lfloor\frac{\lambda_1}{2}\rfloor$, forcing $k(\rho)=c$); so
whether $\lambda_1$ is even or odd, $W_{(v,x)}^\lambda=\im(x^{\lambda_1-1})$ and 
part (1) is proved. The rest of the case is almost identical to Case 2 of
Proposition \ref{prop:recursion}.

\noindent
\textbf{Case 3:} $k(\lambda)=2c+1$ for some $c\geq 1$. 
By the definition of $\Phi^B$, we must have $\lambda_1=2m+1$
where $m=\rho_1=\sigma_1-1$, and $k(\rho)>k(\sigma)=c$.
So $(\rho+\sigma)_1=2m+1$, and $k(\rho+\sigma)=c$.
It follows that $\im(x^{2m})$ is
$2c$-dimensional, contained in $\ker(x)$, and isotropic; 
also $x^{m-1}v$ is contained in $\ker(x)$, and not contained in $\im(x^{2m})$,
so $W_{(v,x)}^\lambda=\F x^{m-1}v+\im(x^{2m})$ is indeed 
$(2c+1)$-dimensional and part (1) is proved. 
The rest proceeds as in Case 3 of
Proposition \ref{prop:recursion}.
\end{proof}

\begin{thm} \label{thm:filt-B}
Let $\lambda\in\cP_{2n+1}^B$.
For any $(v,x)\in\Tt_{\hPhi^B(\lambda)}^B$, there is a unique $\lambda$-filtration
$(V_{\geq a})$ of $V$ which is $B$-adapted to $(v,x)$.
\end{thm}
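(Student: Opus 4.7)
My plan is to mirror the inductive argument used for Theorem \ref{thm:filt-C}, with Proposition \ref{prop:recursion-B} furnishing the recursion step in place of Proposition \ref{prop:recursion}. The induction is on $n$: the case $n = 0$ is vacuous, and when $\lambda_1 = 1$ (forcing $\lambda = (1^{2n+1})$, and in turn $v = 0$, $x = 0$) the unique $\lambda$-filtration is $V_{\geq a} = V$ for $a \leq 0$ and $V_{\geq a} = 0$ for $a \geq 1$, trivially $B$-adapted to $(v,x)$. For the main case $\lambda_1 \geq 2$, I apply Proposition \ref{prop:recursion-B} to extract the isotropic subspace $W = W_{(v,x)}^\lambda \subseteq \ker(x)$ of dimension $k(\lambda)$, whose induced pair $(v', x') = (v + W, x|_{W^\perp/W})$ lies in $\Tt_{\hPhi^B(\lambda')}^B$ in the exotic nilpotent cone of $W^\perp/W$. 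The inductive hypothesis supplies a unique $\lambda'$-filtration $(V_{\geq a}')$ of $W^\perp/W$ that is $B$-adapted to $(v', x')$; I lift it via Proposition \ref{prop:filt}(2) (which holds verbatim for $\lambda \in \cP_{2n+1}^B$) to a $\lambda$-filtration $(V_{\geq a})$ of $V$.

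The bulk of the verification that $(V_{\geq a})$ is $B$-adapted carries over from the type-$C$ proof unchanged: the inclusions $x(V_{\geq a}) \subseteq V_{\geq a+2}$ for $a \notin \{\lambda_1 - 2\}$ follow from the corresponding inclusions for $(V_{\geq a}')$ and from $W \subseteq \ker(x)$, while the edge case $a = \lambda_1 - 2$ (relevant only when $\lambda_1 - 1$ is itself a part of $\lambda$) uses the explicit formula for $W_{(v,x)}^\lambda$, with the parity roles of $\lambda_1$ reversed relative to type $C$. The one genuinely new point is the condition $v \in V_{\geq 2}$: for $\lambda_1 \geq 3$, the lifting formula $V_{\geq 2} = \varphi^{-1}(V_{\geq 2}')$ combined with $v' \in V_{\geq 2}'$ delivers it immediately. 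In the residual case $\lambda_1 = 2$, one has $\lambda' = (1^{2(n-k(\lambda))+1})$, $V_{\geq 2} = 0$ and $v' = 0$, so one must additionally check that $v = 0$ for every $(v, x) \in \Tt_{\hPhi^B(\lambda)}^B$; this follows by inspection of $\Phi^B$, which forces the first component of any labelling bipartition to be $\varnothing$, and then the explicit description of $\Oo_{\varnothing;\sigma}$ forces $v = 0$.

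Uniqueness is handled by the symmetric argument of Theorem \ref{thm:filt-C}: for any $\lambda$-filtration $(U_{\geq a})$ that is $B$-adapted to $(v, x)$, Proposition \ref{prop:recursion-B}(2) forces $U_{\geq \lambda_1 - 1} = W$, after which Proposition \ref{prop:filt}(1) shows that the induced $\lambda'$-filtration of $W^\perp/W$ is $B$-adapted to $(v', x')$ and hence agrees with $(V_{\geq a}')$ by the inductive uniqueness, so $(U_{\geq a}) = (V_{\geq a})$. I expect the principal obstacle to be the low-$\lambda_1$ edge cases (especially $\lambda_1 = 2$), where the lifting formula does not deliver $v \in V_{\geq 2}$ directly and one must argue combinatorially about which bipartitions can appear in the relevant piece; by contrast, all the $x$-compatibility checks should be straightforward rewrites of the type-$C$ argument.
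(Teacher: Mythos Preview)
Your proposal is correct and follows the same approach as the paper, which simply says the proof ``is entirely analogous to the proof of Theorem~\ref{thm:filt-C}, using Proposition~\ref{prop:recursion-B} in place of Proposition~\ref{prop:recursion}.'' You have in fact been more careful than the paper: the condition $v\in V_{\geq 2}$ does not follow from the lifting formula when $\lambda_1=2$ (since then $V_{\geq 2}=0$ while $v'\in V_{\geq 2}'$ only yields $v\in W$), and your combinatorial argument that $\Phi^B(\rho;\sigma)_1\leq 2$ forces $\rho=\varnothing$, hence $v=0$, correctly fills this small gap.
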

\begin{proof}
This is entirely analogous to the proof of Theorem \ref{thm:filt-C}, using
Proposition \ref{prop:recursion-B} in place of Proposition \ref{prop:recursion}.
\end{proof}

Now let $(\mu;\nu)\in\cQ_n^B$ and set $\lambda=\Phi^B(\mu;\nu)$. Fix a
$\lambda$-filtration $(V_{\geq a})$ of $V$ and let $P$ be its stabilizer in $Sp(V)$.
Then $Sp(V)\times^P(V_{\geq 2}\oplus S_{\geq 2})$ can be identified with the variety
of triples $((U_{\geq a}),v,x)$ where $(v,x)\in\fN$ and $(U_{\geq a})$ is a 
$\lambda$-filtration of $V$ which is $B$-adapted to $(v,x)$. 
We have a proper map
$\pi:Sp(V)\times^P(V_{\geq 2}\oplus S_{\geq 2})\to \fN$ 
which forgets the filtration $(U_{\geq a})$.

\begin{thm} \label{thm:res-B}
With notation as above, the map $\pi:Sp(V)\times^P(V_{\geq 2}\oplus S_{\geq 2})\to \fN$
has image $\overline{\Oo_{\mu;\nu}}$ and is a resolution of that variety.
This resolution restricts to an isomorphism $Sp(V)\times^P U\isomto \Tt_{\mu;\nu}^B$
where $U$ is some open subvariety of $V_{\geq 2}\oplus S_{\geq 2}$. In particular,
$\Tt_{\mu;\nu}^B$ is smooth.
\end{thm}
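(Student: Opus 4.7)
The plan is to mirror the proof of Theorem~\ref{thm:res-C} step for step, substituting the type-$B$ ingredients (Proposition~\ref{prop:recursion-B} and Theorem~\ref{thm:filt-B}) at each stage. The argument divides into four pieces: verifying the source has dimension $2(n^2-b(\mu;\nu))$; identifying the image of $\pi$ as $\overline{\Oo_{\mu;\nu}}$; establishing bijectivity of $\pi$ on the open locus $\pi^{-1}(\Tt_{\mu;\nu}^B)$ via the uniqueness clause of Theorem~\ref{thm:filt-B}; and promoting this bijection to an isomorphism via the recursive formula for the $B$-adapted filtration.

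For the dimension count I would reduce to characteristic zero (the dimensions involved depend only on $\lambda=\Phi^B(\mu;\nu)$, not on $\F$) and compare with the classical Jacobson--Morozov resolution $SO(\tV)\times^{\tilde P}\fo(\tV)_{\geq 2}\to\overline{\cO_{\mu;\nu}^B}$, which has the required dimension $2(n^2-b(\mu;\nu))$. The key combinatorial input is that when $(V_{\geq a})$ arises as the even part of a JM-grading of $\tV=\F e_0\oplus V$, one has $\tV_a=V_a$ for $a\neq 0$ and $\tV_0=V_0\oplus\F e_0$. Two identities follow: the equality $\dim SO(\tV_0)=\dim Sp(V_0)$ forces $\dim Sp(V)/P=\dim SO(\tV)/\tilde P$; and a short calculation using formulas analogous to~\eqref{eqn:filtdim} for the orthogonal Lie algebra gives $\dim V_{\geq 2}+\dim S_{\geq 2}=\dim\fo(\tV)_{\geq 2}$, the shift from $V_{\geq 1}$ in the type-$C$ setup to $V_{\geq 2}$ here being exactly compensated by the extra copy of $\F e_0$ contributed to $\tV_0$.

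Given the dimension count, $\pi$ is proper with irreducible source of dimension $2(n^2-b(\mu;\nu))$, so its image is an irreducible closed subvariety of $\fN$ of dimension at most $2(n^2-b(\mu;\nu))$ containing $\Tt_{\mu;\nu}^B\supseteq\Oo_{\mu;\nu}$ by Theorem~\ref{thm:filt-B}; the image must therefore equal $\overline{\Oo_{\mu;\nu}}$. The preimage $\pi^{-1}(\Tt_{\mu;\nu}^B)$ is open and $Sp(V)$-stable, hence of the form $Sp(V)\times^P U$ for some open $U\subseteq V_{\geq 2}\oplus S_{\geq 2}$, and the uniqueness in Theorem~\ref{thm:filt-B} makes $\pi:Sp(V)\times^P U\to\Tt_{\mu;\nu}^B$ bijective.

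The last step is to check that the inverse is a morphism, or equivalently that the assignment of the unique $B$-adapted $\lambda$-filtration defines a morphism $\Tt_{\mu;\nu}^B\to Sp(V)/P$; as in Theorem~\ref{thm:res-C}, the recursion of Theorem~\ref{thm:filt-B} reduces this to the statement that $(v,x)\mapsto W_{(v,x)}^\lambda$ is a morphism to the appropriate isotropic Grassmannian, which is clear from the explicit formula in Proposition~\ref{prop:recursion-B}. Smoothness of $\Tt_{\mu;\nu}^B$ then follows since $Sp(V)\times^P U$ is smooth. The main obstacle I foresee is the off-by-one bookkeeping in the dimension identity $\dim V_{\geq 2}+\dim S_{\geq 2}=\dim\fo(\tV)_{\geq 2}$; beyond that, the argument is essentially the type-$B$ analogue of Theorem~\ref{thm:res-C}, with no new conceptual content.
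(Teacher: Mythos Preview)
Your proposal is correct and follows essentially the same approach as the paper's proof: both reduce everything except the dimension count to the argument of Theorem~\ref{thm:res-C}, and handle the dimension count by passing to characteristic zero and comparing with the Jacobson--Morozov resolution $SO(\tV)\times^{\tilde P}\fo(\tV)_{\geq 2}\to\overline{\cO_{\mu;\nu}^B}$, using that $\dim\tV_a=\dim V_a$ for $a\neq 0$ while $\dim\tV_0=\dim V_0+1$. The paper's write-up is slightly more careful to treat $\tV$ and $V$ as independent objects and compare dimensions only, rather than literally embedding $V$ inside $\tV$, but this is a matter of presentation rather than substance.
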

\begin{proof}
The only part which is different from the proof of Theorem \ref{thm:res-C} is the
verification that $\dim Sp(V)\times^P(V_{\geq 2}\oplus S_{\geq 2})=2(n^2-b(\mu;\nu))$, 
for which
we can assume that the characteristic is zero. Let $\tV$ be a $(2n+1)$-dimensional
vector space with a nondegenerate quadratic form, and let $(\tV_a)$ be a Jacobson--Morozov
grading of $\tV$ associated to a nilpotent $y\in\cN(\fo(\tV))$ of Jordan type $\lambda$.
Define a filtration $\tV_{\geq a}=\bigoplus_{b\geq a}\tV_b$ and let $\widetilde{P}$
be its stabilizer in $SO(\tV)$, a parabolic subgroup. We have a known resolution
$SO(\tV)\times^{\widetilde{P}}\fo(\tV)_{\geq 2}\to\overline{\cO_{\mu;\nu}^B}$, so it
suffices to show that
\begin{equation}
\dim(SO(\tV)/\widetilde{P})+\dim\fo(\tV)_{\geq 2}=\dim(Sp(V)/P)+\dim V_{\geq 2}+
\dim S_{\geq 2}.
\end{equation}
Now the fact that $(V_{\geq a})$ is a $\lambda$-filtration implies that
\[ 
\dim \tV_{\geq a}=\begin{cases}
\dim V_{\geq a}&\text{ if $a\geq 1$,}\\
\dim V_{\geq a}+1&\text{ if $a\leq 0$.}
\end{cases}
\]
It follows easily that the two partial flag varieties $SO(\tV)/\widetilde{P}$ and
$Sp(V)/P$ have the same dimension. Moreover,
\begin{equation}
\dim\fo(\tV)_{\geq 2}=\sum_{a\geq 1}\binom{\dim \tV_{a}}{2} +
\sum_{\substack{a\geq 2\\1-a\leq b\leq a-2}} (\dim \tV_a)(\dim \tV_b),
\end{equation}
and comparing with \eqref{eqn:filtdim} gives the result.
\end{proof}

Now suppose that $(\mu;\nu)\in\cQ_n^\circ$.
In characteristic $\neq 2$, Kraft and Procesi showed in \cite{kp:special}
that the special pieces 
$\cS^B_{\mu;\nu}$ and $\cS^C_{\mu;\nu}$ are rationally smooth.  
This can be expressed as a statement about stalks of intersection cohomology complexes on these varieties: specifically, that
$\dim \IC(\overline{\cO^B_{\mu;\nu}})|_x = 1$
for all $x \in \cS^B_{\mu;\nu}$,
and similarly for $\cS^C_{\mu;\nu}$.  (This fact, conjectured by 
Lusztig in \cite{lusztig:green}, is now known to hold for all simple Lie algebras 
in good characteristic.) 

\begin{conj}
For any $(\mu;\nu)\in\cQ_n^\circ$, the special pieces $\Ss_{\mu;\nu}$ of $\fN$ in any
characteristic, and the special pieces $\cS^B_{\mu;\nu}$ of $\cN(\fo_{2n+1})$
and $\cS^C_{\mu;\nu}$ of $\cN(\fsp_{2n})$ in characteristic $2$, are also
rationally smooth.
\end{conj}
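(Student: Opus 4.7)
The plan is to reduce all three rational-smoothness statements to a single statement about $\Ss_{\mu;\nu}$ in characteristic $0$, and then to attack that last case by combining the bridges of Sections 3 and 4 with a specialization argument and the classical Kraft--Procesi theorem.

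First I would dispatch the characteristic-$2$ statements using the bridges already built. The morphism $\Psi:\fN\to\cN(\fsp(V))$ of Section 3 is bijective and $Sp(V)$-equivariant, and its inverse requires Frobenius square roots, so it is a universal homeomorphism in the sense of EGA. Such a morphism induces an equivalence of étale sites, hence sends IC sheaves to IC sheaves and preserves stalk dimensions. Combined with Theorem \ref{thm:matchup} and the definition of special pieces, this gives
\[
\dim\IC(\overline{\Oo_{\mu;\nu}})|_{(v,x)}=\dim\IC(\overline{\cO^{C,2}_{\mu;\nu}})|_{\Psi(v,x)},
\]
so rational smoothness of $\Ss_{\mu;\nu}$ in characteristic $2$ immediately yields that of $\cS^C_{\mu;\nu}$. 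For $\cS^B_{\mu;\nu}$ in characteristic $2$ I would exploit the fibration-over-common-base structure of Propositions \ref{prop:vectorbundle} and \ref{prop:affinebundle}: both $\Ee_{\mu;\nu}$ and $\cE_{\mu;\nu}$ project to $\cO^{C,2}_{\varnothing;\mu+\nu}$ with homeomorphic affine-space fibres, and the deformation of Remark \ref{rmk:flatfamily} should allow one to identify the IC stalks along these fibres.

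Next I would reduce rational smoothness of $\Ss_{\mu;\nu}$ in arbitrary characteristic to the characteristic-$0$ case. All relevant geometric input to $\fN$ is manifestly characteristic-independent: the orbits and their closures (Theorem \ref{thm:exotic}), Kato's exotic Springer resolution, and the reductive quotients of point stabilizers \cite[Proposition 4.5]{kato:exotic}. Applying the Decomposition Theorem to the exotic Springer resolution expresses the push-forward as a sum of shifted IC complexes whose multiplicities are given by Springer-fibre cohomology, which is characteristic-independent. Hence the IC stalks of $\overline{\Oo_{\mu;\nu}}$ are themselves characteristic-independent, and the conjecture for $\fN$ in any characteristic follows from the case $\F=\C$.

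In characteristic $0$, the strategy is to compare IC stalks on $\Ss_{\mu;\nu}$ with those on $\cS^C_{\mu;\nu}$ (which is rationally smooth by Kraft--Procesi) using the exotic Springer correspondence. Because Kato's correspondence is a bijection and no nontrivial local systems intervene, the Lusztig--Shoji algorithm in the exotic setting should match the classical one orbit-by-orbit inside a given special piece, forcing the one-dimensionality of the exotic IC stalks there. The principal obstacle is precisely this matching: the Kraft--Procesi proof relies on an explicit transverse slice built from the involution on $\End(V)$, and there is no immediate analogue for $\fN$. One would either construct a new transverse-slice theory for the exotic cone, using the vector component $v$ to supply the missing ingredient, or carry out the Lusztig--Shoji comparison in sufficient detail to conclude directly. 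Either route requires substantially heavier machinery than the combinatorics employed in the body of the present paper.
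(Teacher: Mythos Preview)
The statement you are attempting to prove is labelled a \emph{Conjecture} in the paper, and the paper does not supply a proof. What the paper does provide is partial evidence: Lemma~\ref{lem:fnsp-stalk} shows that $\Psi$ identifies IC stalks on $\fN$ and $\cN(\fsp(V))$ in characteristic $2$, so rational smoothness of $\Ss_{\mu;\nu}$ and of $\cS^C_{\mu;\nu}$ are equivalent in that characteristic; and Example~\ref{exam:maybenotsmoothspecial} verifies one non-trivial case by hand. Your first reduction, via $\Psi$ being a universal homeomorphism, is a correct alternative proof of Lemma~\ref{lem:fnsp-stalk} and matches the paper's only rigorous step toward the conjecture.

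The remainder of your plan has genuine gaps. Your characteristic-independence argument for IC stalks on $\fN$ is circular as stated: the Decomposition Theorem tells you the pushforward of the constant sheaf along the exotic Springer resolution is a sum of shifted IC complexes, but the multiplicities are \emph{determined by} the IC stalks (through the usual orthogonality/triangularity), not the other way round. To extract the IC stalks from the Springer-fibre cohomology you need an additional input such as parity vanishing or the validity of the Lusztig--Shoji algorithm for $\fN$, and the paper explicitly records this as open (it is \cite[Conjecture~6.4]{ah}, discussed in the final paragraphs of Section~5). So your reduction to characteristic $0$ presupposes a conjecture of comparable depth to the one you are trying to prove.

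Your proposed route for $\cS^B_{\mu;\nu}$ in characteristic $2$ via the flat family of Remark~\ref{rmk:flatfamily} is also incomplete: a flat deformation does not in general preserve IC stalks (nearby cycles can acquire extra cohomology), and the fibre bundles of Propositions~\ref{prop:vectorbundle} and~\ref{prop:affinebundle} cover $\Ee_{\mu;\nu}$ and $\cE_{\mu;\nu}$, not the full special pieces. Finally, you correctly identify the characteristic-$0$ exotic case as the crux, and correctly note that neither a transverse-slice argument nor a direct Lusztig--Shoji comparison is presently available; the paper's authors evidently agree, which is why they state the result as a conjecture rather than a theorem.
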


Note that Theorems \ref{thm:res-C} and \ref{thm:res-B} give two (usually different)
resolutions of $\overline{\Oo_{\mu;\nu}}$, and show that the open subvariety 
$\Tt_{\mu;\nu}^C\cup\Tt_{\mu;\nu}^B$ of $\Ss_{\mu;\nu}$ is smooth.

\begin{exam} \label{exam:maybenotsmoothspecial}
The smallest $\Ss_{\mu;\nu}$ which is not proved to be smooth by this argument is
the $58$-dimensional $\Ss_{(21);(21)}$ for $n=6$. We have
\[
\begin{split}
\Tt_{(21);(21)}^C&=\Oo_{(21);(21)}\disjunion\Oo_{(1^2);(31)}\disjunion\Oo_{(2);(2^2)}
\disjunion\Oo_{(1);(32)}\disjunion\Oo_{\varnothing;(42)},\\
\Tt_{(21);(21)}^B&=\Oo_{(21);(21)}\disjunion\Oo_{(2^2);(1^2)}\disjunion\Oo_{(21^2);(2)}
\disjunion\Oo_{(2^2 1);(1)},\\
\Ss_{(21);(21)}&=\Tt_{(21);(21)}^C\cup\Tt_{(21);(21)}^B\cup\Oo_{(1^3);(3)}.
\end{split}
\]
The resolutions of $\overline{\Oo_{(21);(21)}}$ given by 
Theorems \ref{thm:res-C} and \ref{thm:res-B} both have $1$-dimensional fibres over
the orbit $\Oo_{(1^3);(3)}$. Using the Decomposition Theorem, one can check that
$\Ss_{(21);(21)}$ is rationally smooth.
\end{exam}

The rational smoothness of $\cS^C_{\mu;\nu}$ in characteristic $2$ is equivalent to that
of $\Ss_{\mu;\nu}$, by the following result:

\begin{lem}\label{lem:fnsp-stalk}
Suppose $\F$ has characteristic $2$. 
For any $(\mu;\nu) \in \cQ_n$, the $Sp(V)$-equivariant bijection 
$\Psi: \fN \to \cN(\fsp(V))$ induces isomorphisms of stalks
\[
\IC(\overline{\Oo_{\mu;\nu}})|_{(v,x)} \cong 
\IC(\overline{\cO^{C,2}_{\mu;\nu}})|_{\Psi(v,x)}
\]
for all $(v,x) \in \overline{\Oo_{\mu;\nu}}$.
\end{lem}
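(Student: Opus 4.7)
The plan is to observe that $\Psi$ is a finite universal homeomorphism respecting the orbit stratification, so that the claim about IC stalks will follow from standard étale sheaf theory.

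First, I would verify that $\Psi: \fN \to \cN(\fsp(V))$ is a finite, bijective morphism whose function-field extension is purely inseparable, and hence a universal homeomorphism. Bijectivity is Kato's result recalled just after \eqref{ses1}. For finiteness and pure inseparability, one works out $\Psi(v,x) = s(v)+x$ in coordinates: writing $v = \sum a_i e_i$, the $(j,2n+1-j)$-entry of $\Psi(v,x)$ is $a_j^2$, while for $j+k \neq 2n+1$ the $(j,k)$-entry is $a_j a_{2n+1-k}+x_{jk}$. Hence the coordinate ring of $V \oplus \fo(V)$ is obtained from that of $\fsp(V)$ by adjoining the $a_j$, each subject to the purely inseparable relation $a_j^2 = y_{j,2n+1-j}$. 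This is a finite, purely inseparable ring extension in characteristic $2$, so $\Psi$ is finite, radicial, and surjective, hence a universal homeomorphism; restricting to the closed subvarieties $\overline{\Oo_{\mu;\nu}}$ and $\overline{\cO^{C,2}_{\mu;\nu}}$ preserves these properties.

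Next, I would invoke Theorem \ref{thm:matchup} together with $Sp(V)$-equivariance to conclude that $\Psi$ identifies the orbit stratifications of the two closures: each $\Oo_{\rho;\sigma}$ is sent bijectively onto $\cO^{C,2}_{\rho;\sigma}$ for $(\rho;\sigma) \leq (\mu;\nu)$. The key input is the standard result (SGA 4, Exp.~VIII, 1.1) that pullback along a universal homeomorphism is an equivalence of bounded constructible derived categories of $\Qlb$-sheaves, and in particular preserves the perverse $t$-structure and intermediate extensions of simple perverse sheaves supported on stratum closures. Since trivial local systems pull back to trivial ones and dimensions are preserved, it follows that $\Psi^* \IC(\overline{\cO^{C,2}_{\mu;\nu}}) \cong \IC(\overline{\Oo_{\mu;\nu}})$; taking stalks at $(v,x)$ and at $\Psi(v,x)$ gives the asserted isomorphism.

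The main obstacle will be the clean verification of the universal homeomorphism property, but the coordinate computation above handles this with little effort. Everything else is formal, given Theorem \ref{thm:matchup} and the cited $\ell$-adic sheaf theory; no new combinatorial input is required.
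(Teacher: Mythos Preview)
Your proposal is correct and gives a clean proof. The paper's argument is slightly different in packaging: rather than invoking the SGA~4 equivalence for universal homeomorphisms and pulling back, the paper pushes forward. It uses that $\Psi$ is finite, so $R\Psi_*$ is $t$-exact for the perverse $t$-structure and therefore commutes with intermediate extension; then $R\Psi_*\IC(\overline{\Oo_{\mu;\nu}})$ is the IC complex for the local system $\Psi_*(\underline{\Qlb})$ on $\cO^{C,2}_{\mu;\nu}$, which is constant because $\Psi$ is bijective. Proper base change then identifies the stalks. Your approach makes explicit the reason the paper's ``bijective implies constant local system'' step works---namely the radicial nature of $\Psi$, which you verify by the coordinate computation $a_j^2 = y_{j,2n+1-j}$---and then gets the IC comparison in one stroke from the topos equivalence. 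The paper's argument is a touch more elementary in that it does not need to name the universal homeomorphism machinery, but yours is more transparent about why bijectivity alone suffices here and avoids the mild gap of justifying that $\Psi_*$ of a constant sheaf is again constant.
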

\begin{proof}
Because $\Psi$ is a finite morphism, the functor $R\Psi_*$ is exact on the category of perverse sheaves.  Moreover, $R\Psi_*\IC(\overline{\Oo_{\mu;\nu}})$ is the intersection cohomology complex associated to the local system $\mathcal{L}$ on $\cO^{C,2}_{\mu;\nu}$ obtained by applying $\Psi_*$ to the constant sheaf on $\Oo_{\mu;\nu}$.  
Because $\Psi$ is a bijection, $\mathcal{L}$ must also be a constant sheaf.  In other words, $R\Psi_*\IC(\overline{\Oo_{\mu;\nu}}) \cong \IC(\overline{\cO^{C,2}_{\mu;\nu}})$.  
Finally, the proper base change theorem yields an isomorphism
$\IC(\overline{\Oo_{\mu;\nu}})|_{(v,x)} \cong 
R\Psi_*\IC(\overline{\Oo_{\mu;\nu}})|_{\Psi(v,x)}$,
completing the proof.
\end{proof}

We end with some remarks on the problem of computing stalks of intersection cohomology complexes of orbit closures in general.
For nilpotent orbits in good characteristic, these stalks can be computed by an algorithm described by Lusztig~\cite[\S24]{lusztig:cs5}, following earlier work of Shoji~\cite{shoji:green}.  Several variants of this algorithm have appeared in the literature, including one introduced by Shoji~\cite{shoji:limit} related to the combinatorics 
of \emph{limit symbols}.  In~\cite{ah}, it was shown that Shoji's limit-symbol version of the algorithm computes stalks of intersection cohomology complexes on the enhanced nilpotent cone.  It was conjectured in~\cite[Conjecture~6.4]{ah} that the same 
algorithm works for $\fN$ as well; further support for this conjecture comes from
Kato's work on the exotic Springer correspondence in \cite{kato:deformations}.

In view of Lemma \ref{lem:fnsp-stalk} and the work of 
Xue in \cite{xue,xue:combinatorics}, we have a new interpretation of
\cite[Conjecture~6.4]{ah}: it asserts that the 
stalks of intersection cohomology complexes for orbit closures in $\cN(\fsp(V))$ in
characteristic $2$ are given by the Lusztig--Shoji algorithm, defined 
using the Springer correspondence in characteristic $2$.
It is natural to guess that this holds for $\cN(\fo(\tV))$ also.


\end{document}